\newtheorem{theorem}{Theorem}[section]
\newtheorem{lemma}{Lemma}[section]
\newtheorem{corollary}{Corollary}[section]
\newtheorem{remark}{Remark}[section]
\newtheorem{example}{Example}[section]
\DeclareMathOperator*{\rank}{rank}
\DeclareMathOperator*{\diag}{diag}
\DeclareMathOperator*{\tr}{tr}
\begin{document}
	
\title{On the perturbation of an $L^{2}$-orthogonal projection}
\author{
Xuefeng Xu%
\thanks{Department of Mathematics, Purdue University, West Lafayette, IN 47907, USA (\texttt{xuxuefeng@lsec.cc.ac.cn; xu1412@purdue.edu}).}
}
%
\date{\today}
\maketitle

\begin{abstract}

The $L^{2}$-orthogonal projection onto a subspace is an important mathematical tool, which has been widely applied in many fields such as linear least squares problems, eigenvalue problems, ill-posed problems, and randomized algorithms. In some numerical applications, the entries of a matrix will seldom be known exactly, so it is necessary to develop some bounds to characterize the effects of the uncertainties caused by matrix perturbation. In this paper, we establish new perturbation bounds for the $L^{2}$-orthogonal projection onto the column space of a matrix, which involve upper (lower) bounds and combined upper (lower) bounds. The new bounds contain some sharper counterparts of the existing ones. Numerical examples are also given to illustrate our theoretical results.

\end{abstract}
	
\noindent{\bf Keywords:} orthogonal projection, perturbation, singular value decomposition
	
\medskip
	
\noindent{\bf AMS subject classifications:} 15A09, 15A18, 47A55, 65F35

\section{Introduction}

The $L^{2}$-orthogonal projection onto a subspace is an important geometric construction in finite-dimensional spaces, which has been applied in many fields such as linear least squares problems, eigenvalue (singular value) problems, ill-posed problems, and randomized algorithms (see, e.g.,~\cite{Nelson1987,Fierro1995,Fierro1996,Jia1999,Morigi2006,Morigi2007,Grcar2010,Hochstenbach2010,Coakley2011,Drineas2011,Golub2013,Betcke2017}). However, in some numerical applications, the entries of a matrix will seldom be known exactly. Thus, it is necessary to establish some bounds to characterize the effects arising from matrix perturbation. Over the past decades, many researchers have investigated the stability of an $L^{2}$-orthogonal projection and developed various upper bounds to characterize the deviation of an $L^{2}$-orthogonal projection after perturbation, which can be found, e.g., in~\cite{Stewart1977,Sun1984,Stewart1990,Sun2001,Li2013,Chen2016,Li2018}.

Let $\mathbb{C}^{m\times n}$, $\mathbb{C}_{r}^{m\times n}$, and $\mathscr{U}_{n}$ be the set of all $m\times n$ complex matrices, the set of all $m\times n$ complex matrices of rank $r$, and the set of all $n\times n$ unitary matrices, respectively. For any $M\in\mathbb{C}^{m\times n}$, the symbols $M^{\ast}$, $M^{\dagger}$, $\rank(M)$, $\|M\|_{\mathscr{U}}$, $\|M\|_{F}$, $\|M\|_{2}$, and $P_{M}$ denote the conjugate transpose, the Moore--Penrose inverse, the rank, the unitarily invariant norm (see, e.g.,~\cite[Page 357]{Horn2013}), the Frobenius norm, the spectral norm, and the $L^{2}$-orthogonal projection onto the column space of $M$ (i.e., $P_{M}=MM^{\dagger}$), respectively.

Let $A\in\mathbb{C}^{m\times n}_{r}$, $B\in\mathbb{C}^{m\times n}_{s}$, and $E=B-A$. Sun~\cite{Sun1984} established the following estimates:
\begin{subequations}
\begin{align}
&\|P_{B}-P_{A}\|_{\mathscr{U}}\leq\big(\|A^{\dagger}\|_{2}+\|B^{\dagger}\|_{2}\big)\|E\|_{\mathscr{U}},\\
&\|P_{B}-P_{A}\|_{F}^{2}\leq\big(\|A^{\dagger}\|_{2}^{2}+\|B^{\dagger}\|_{2}^{2}\big)\|E\|_{F}^{2},\label{Sun1}\\
&\|P_{B}-P_{A}\|_{2}\leq\max\big\{\|A^{\dagger}\|_{2},\|B^{\dagger}\|_{2}\big\}\|E\|_{2}.
\end{align}
\end{subequations}
In particular, if $s=r$, then
\begin{subequations}
\begin{align}
&\|P_{B}-P_{A}\|_{\mathscr{U}}\leq 2\min\big\{\|A^{\dagger}\|_{2},\|B^{\dagger}\|_{2}\big\}\|E\|_{\mathscr{U}},\\
&\|P_{B}-P_{A}\|_{F}^{2}\leq 2\min\big\{\|A^{\dagger}\|_{2}^{2},\|B^{\dagger}\|_{2}^{2}\big\}\|E\|_{F}^{2},\label{Sun2}\\
&\|P_{B}-P_{A}\|_{2}\leq\min\big\{\|A^{\dagger}\|_{2},\|B^{\dagger}\|_{2}\big\}\|E\|_{2}.
\end{align}
\end{subequations}
Recently, Chen et al.~\cite[Theorems 2.4 and 2.5]{Chen2016} improved the above estimates and proved that
\begin{subequations}
\begin{align}
&\|P_{B}-P_{A}\|_{\mathscr{U}}\leq\|EA^{\dagger}\|_{\mathscr{U}}+\|EB^{\dagger}\|_{\mathscr{U}},\\
&\|P_{B}-P_{A}\|_{F}^{2}\leq\|EA^{\dagger}\|_{F}^{2}+\|EB^{\dagger}\|_{F}^{2},\label{Chen1}\\
&\|P_{B}-P_{A}\|_{2}\leq\max\big\{\|EA^{\dagger}\|_{2}, \|EB^{\dagger}\|_{2}\big\}.
\end{align}
\end{subequations}
In particular, if $s=r$, then
\begin{subequations}
\begin{align}
&\|P_{B}-P_{A}\|_{\mathscr{U}}\leq 2\min\big\{\|EA^{\dagger}\|_{\mathscr{U}},\|EB^{\dagger}\|_{\mathscr{U}}\big\},\\
&\|P_{B}-P_{A}\|_{F}^{2}\leq 2\min\big\{\|EA^{\dagger}\|_{F}^{2},\|EB^{\dagger}\|_{F}^{2}\big\},\label{Chen2}\\
&\|P_{B}-P_{A}\|_{2}\leq\min\big\{\|EA^{\dagger}\|_{2},\|EB^{\dagger}\|_{2}\big\}.
\end{align}
\end{subequations}
Moreover, Chen et al.~\cite[Theorem 2.8]{Chen2016} derived the following combined estimate:
\begin{equation}\label{Chen-comb1}
\|P_{B}-P_{A}\|_{F}^{2}+\min\bigg\{\frac{\|A^{\dagger}\|_{2}^{2}}{\|B^{\dagger}\|_{2}^{2}},\frac{\|B^{\dagger}\|_{2}^{2}}{\|A^{\dagger}\|_{2}^{2}}\bigg\}\|P_{B^{\ast}}-P_{A^{\ast}}\|_{F}^{2}\leq\big(\|A^{\dagger}\|_{2}^{2}+\|B^{\dagger}\|_{2}^{2}\big)\|E\|_{F}^{2}.
\end{equation}
In particular, if $s=r$, then
\begin{align}
&\|P_{B}-P_{A}\|_{F}^{2}+\min\bigg\{\frac{\|A^{\dagger}\|_{2}^{2}}{\|B^{\dagger}\|_{2}^{2}},\frac{\|B^{\dagger}\|_{2}^{2}}{\|A^{\dagger}\|_{2}^{2}}\bigg\}\|P_{B^{\ast}}-P_{A^{\ast}}\|_{F}^{2}\leq 2\min\big\{\|A^{\dagger}\|_{2}^{2},\|B^{\dagger}\|_{2}^{2}\big\}\|E\|_{F}^{2},\label{Chen-comb2}\\
&\|P_{B}-P_{A}\|_{F}^{2}+\|P_{B^{\ast}}-P_{A^{\ast}}\|_{F}^{2}\leq\frac{4\|A^{\dagger}\|_{2}^{2}\|B^{\dagger}\|_{2}^{2}}{\|A^{\dagger}\|_{2}^{2}+\|B^{\dagger}\|_{2}^{2}}\|E\|_{F}^{2}.\label{Chen-comb3}
\end{align}
More recently, Li et al.~\cite[Corollary 2.4]{Li2018} showed that
\begin{equation}\label{Li1}
\|P_{B}-P_{A}\|_{F}^{2}\leq\big(\|A^{\dagger}\|_{2}^{2}+\|B^{\dagger}\|_{2}^{2}\big)\|E\|_{F}^{2}-\frac{\|B^{\dagger}\|_{2}^{2}}{\|A^{\dagger}\|_{2}^{2}}\|A^{\dagger}E\|_{F}^{2}-\frac{\|A^{\dagger}\|_{2}^{2}}{\|B^{\dagger}\|_{2}^{2}}\|B^{\dagger}E\|_{F}^{2}.
\end{equation}
In particular, if $s=r$, then
\begin{equation}\label{Li2}
\|P_{B}-P_{A}\|_{F}^{2}\leq 2\min\bigg\{\|B^{\dagger}\|_{2}^{2}\|E\|_{F}^{2}-\frac{\|B^{\dagger}\|_{2}^{2}}{\|A^{\dagger}\|_{2}^{2}}\|A^{\dagger}E\|_{F}^{2}, \|A^{\dagger}\|_{2}^{2}\|E\|_{F}^{2}-\frac{\|A^{\dagger}\|_{2}^{2}}{\|B^{\dagger}\|_{2}^{2}}\|B^{\dagger}E\|_{F}^{2}\bigg\}.
\end{equation}
In addition, Li et al.~\cite[Theorem 2.5]{Li2018} obtained the following combined estimate:
\begin{equation}\label{Li-comb1}
\|P_{B}-P_{A}\|_{F}^{2}+\|P_{B^{\ast}}-P_{A^{\ast}}\|_{F}^{2}\leq 2\max\big\{\|A^{\dagger}\|_{2}^{2},\|B^{\dagger}\|_{2}^{2}\big\}\|E\|_{F}^{2}-\frac{\|A^{\dagger}EB^{\dagger}\|_{F}^{2}+\|B^{\dagger}EA^{\dagger}\|_{F}^{2}}{\min\big\{\|A^{\dagger}\|_{2}^{2},\|B^{\dagger}\|_{2}^{2}\big\}}.
\end{equation}
In particular, if $s=r$, then
\begin{equation}\label{Li-comb2}
\|P_{B}-P_{A}\|_{F}^{2}+\|P_{B^{\ast}}-P_{A^{\ast}}\|_{F}^{2}\leq\frac{4\|A^{\dagger}\|_{2}^{2}\|B^{\dagger}\|_{2}^{2}}{\|A^{\dagger}\|_{2}^{2}+\|B^{\dagger}\|_{2}^{2}}\|E\|_{F}^{2}-\frac{2\big(\|A^{\dagger}EB^{\dagger}\|_{F}^{2}+\|B^{\dagger}EA^{\dagger}\|_{F}^{2}\big)}{\|A^{\dagger}\|_{2}^{2}+\|B^{\dagger}\|_{2}^{2}}.
\end{equation}

Although the estimate~\eqref{Chen1} has improved~\eqref{Sun1}, the upper bound in~\eqref{Chen1} is still too large in certain cases. We now give a simple example:
\begin{equation}\label{Ex0}
A=\begin{pmatrix}
1 & 0 \\
0 & 0
\end{pmatrix}, \quad B=\begin{pmatrix}
\frac{\varepsilon}{1+\varepsilon} & 0 \\
0 & \frac{\varepsilon}{10}
\end{pmatrix},
\end{equation}
where $0<\varepsilon<1$. In this example, it holds that $\|P_{B}-P_{A}\|_{F}^{2}\equiv1$. Direct computation yields that the upper bound in~\eqref{Chen1} is
\begin{displaymath}
1+\frac{1}{\varepsilon^{2}}+\frac{1}{(1+\varepsilon)^{2}},
\end{displaymath}
which is very large if $0<\varepsilon\ll1$. Alternatively, applying~\eqref{Li1} to the above example, we have that the upper bound for $\|P_{B}-P_{A}\|_{F}^{2}$ is
\begin{displaymath}
\frac{99}{100}+\frac{1}{(1+\varepsilon)^{2}}.
\end{displaymath}
Obviously, under the setting of~\eqref{Ex0}, the upper bound in~\eqref{Li1} is smaller than that in~\eqref{Chen1}. In~\cite{Li2018}, Li et al. also demonstrated the superiority of~\eqref{Li1} (compared with~\eqref{Chen1}) via some examples. However, it is difficult to compare~\eqref{Li1} with~\eqref{Chen1} theoretically. Actually, the estimate~\eqref{Li1} is not always sharper than~\eqref{Chen1}, which can be illustrated by the following example:
\begin{displaymath}
A=\begin{pmatrix}
1 & 0 \\
0 & 0
\end{pmatrix}, \quad B=\begin{pmatrix}
\frac{1}{2} & 1 \\
0 & 1
\end{pmatrix}.
\end{displaymath}
Direct calculations yield that the upper bounds in~\eqref{Chen1} and~\eqref{Li1} are $\frac{25}{4}$ and $\frac{18+3\sqrt{65}}{4}$, respectively. Therefore, there is no determined relation between the estimates~\eqref{Chen1} and~\eqref{Li1}.

Motivated by these observations, we revisit the perturbation of an $L^{2}$-orthogonal projection under the Frobenius norm. In this paper, we establish new upper bounds for $\|P_{B}-P_{A}\|_{F}^{2}$, which include the counterparts of~\eqref{Chen1}, \eqref{Chen2}, \eqref{Li1}, and~\eqref{Li2}. Some new combined upper bounds for $\|P_{B}-P_{A}\|_{F}^{2}$ and $\|P_{B^{\ast}}-P_{A^{\ast}}\|_{F}^{2}$ are also derived, which contain the counterparts of~\eqref{Chen-comb1}, \eqref{Chen-comb2}, \eqref{Chen-comb3}, \eqref{Li-comb1}, and~\eqref{Li-comb2}. Theoretical analysis shows that the new upper bounds are sharper than the existing ones. On the other hand, we also develop novel lower bounds for $\|P_{B}-P_{A}\|_{F}^{2}$  and combined lower bounds for $\|P_{B}-P_{A}\|_{F}^{2}$ and $\|P_{B^{\ast}}-P_{A^{\ast}}\|_{F}^{2}$. Furthermore, we give two examples to illustrate the performances of our theoretical results.

The rest of this paper is organized as follows. In Section~\ref{sec:pre}, we introduce a trace inequality and several identities on $\|P_{B}-P_{A}\|_{F}^{2}$ and $\|P_{B^{\ast}}-P_{A^{\ast}}\|_{F}^{2}$. In Section~\ref{sec:main}, we present some new perturbation bounds for $\|P_{B}-P_{A}\|_{F}^{2}$ and $\|P_{B^{\ast}}-P_{A^{\ast}}\|_{F}^{2}$, which involve upper bounds, lower bounds, combined upper bounds, and combined lower bounds. In Section~\ref{sec:numer}, we exhibit some numerical comparisons between the new bounds and the existing ones.

\section{Preliminaries}

\label{sec:pre}

\setcounter{equation}{0}

In this section, we introduce a useful trace inequality and several important identities on the deviations $\|P_{B}-P_{A}\|_{F}^{2}$ and $\|P_{B^{\ast}}-P_{A^{\ast}}\|_{F}^{2}$.

Let $M\in\mathbb{C}^{n\times n}$ and $N\in\mathbb{C}^{n\times n}$ be Hermitian matrices. The following lemma provides an estimate for the trace of $MN$ (see, e.g.,~\cite[Theorem 4.3.53]{Horn2013}).

\begin{lemma}
Let $\{\lambda_{i}\}_{i=1}^{n}$ and $\{\mu_{i}\}_{i=1}^{n}$ be the spectra of the Hermitian matrices $M\in\mathbb{C}^{n\times n}$ and $N\in\mathbb{C}^{n\times n}$, respectively, where $\lambda_{1}\geq\cdots\geq\lambda_{n}$ and $\mu_{1}\geq\cdots\geq\mu_{n}$. Then
\begin{equation}\label{trace}
\sum_{i=1}^{n}\lambda_{i}\mu_{n-i+1}\leq\tr(MN)\leq\sum_{i=1}^{n}\lambda_{i}\mu_{i}.
\end{equation}
\end{lemma}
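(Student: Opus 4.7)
The plan is to reduce the trace $\tr(MN)$ to an inner product weighted by a doubly stochastic matrix, then apply the rearrangement inequality. I would proceed as follows.

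First, I would use the spectral theorem for Hermitian matrices to write $M=U\Lambda U^{\ast}$ and $N=V\Sigma V^{\ast}$, where $U,V\in\mathscr{U}_{n}$ and $\Lambda=\diag(\lambda_{1},\ldots,\lambda_{n})$, $\Sigma=\diag(\mu_{1},\ldots,\mu_{n})$. Setting $W=U^{\ast}V\in\mathscr{U}_{n}$, the cyclic property of the trace gives
\begin{equation*}
\tr(MN)=\tr(U\Lambda U^{\ast}V\Sigma V^{\ast})=\tr(\Lambda W\Sigma W^{\ast}).
\end{equation*}
Writing $W=(w_{ij})$ and expanding the product entrywise, I would obtain
\begin{equation*}
\tr(MN)=\sum_{i=1}^{n}\sum_{j=1}^{n}\lambda_{i}\mu_{j}|w_{ij}|^{2}.
\end{equation*}

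Next, I would set $s_{ij}=|w_{ij}|^{2}$ and observe that $S=(s_{ij})$ is doubly stochastic: unitarity of $W$ yields $\sum_{j}s_{ij}=\sum_{i}s_{ij}=1$ and $s_{ij}\geq0$. Hence $\tr(MN)=\sum_{i,j}\lambda_{i}\mu_{j}s_{ij}$ is a linear functional of $S$ over the polytope of $n\times n$ doubly stochastic matrices. By Birkhoff's theorem, the extreme points of this polytope are exactly the permutation matrices, so the linear functional attains its maximum and minimum on some permutation matrix $P_{\sigma}=(\delta_{j,\sigma(i)})$. This reduces the estimate to showing
\begin{equation*}
\sum_{i=1}^{n}\lambda_{i}\mu_{n-i+1}\leq\sum_{i=1}^{n}\lambda_{i}\mu_{\sigma(i)}\leq\sum_{i=1}^{n}\lambda_{i}\mu_{i}\quad\text{for every permutation }\sigma.
\end{equation*}

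Finally, the two inequalities above are precisely the rearrangement inequality applied to the decreasing sequences $\{\lambda_{i}\}$ and $\{\mu_{i}\}$: pairing them in the same order maximizes the sum, while pairing them in opposite orders minimizes it. This can be proved in one line by a standard swap argument, showing that any transposition of a pair of indices violating the monotone order can only decrease (resp.\ increase) $\sum_{i}\lambda_{i}\mu_{\sigma(i)}$.

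The main obstacle is the reduction step: one must recognize that the matrix $(|w_{ij}|^{2})$ formed from a unitary $W$ is doubly stochastic and invoke Birkhoff to pass from arbitrary doubly stochastic weights to permutation weights. Once that reduction is in place, the rearrangement inequality finishes the argument cleanly and uniformly gives both the upper and the lower bound.
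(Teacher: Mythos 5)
Your argument is correct and complete. Note that the paper itself does not prove this lemma at all: it is quoted directly from Horn and Johnson (Theorem 4.3.53) as a known trace inequality, so there is no in-paper proof to compare against. Your route---diagonalize $M=U\Lambda U^{\ast}$, $N=V\Sigma V^{\ast}$, write $\tr(MN)=\sum_{i,j}\lambda_{i}\mu_{j}|w_{ij}|^{2}$ with $W=U^{\ast}V$ unitary, observe that $(|w_{ij}|^{2})$ is doubly stochastic, invoke Birkhoff to reduce the extremal values of this linear functional to permutation matrices, and finish with the rearrangement inequality---is a standard, self-contained proof of exactly this statement, and every step checks out (in particular, the value at a doubly stochastic $S$ is a convex combination of the values at permutation matrices, which is precisely what places $\tr(MN)$ between the reversed-order and same-order sums). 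The only stylistic remark is that Birkhoff's theorem is slightly heavier machinery than strictly necessary: once you have $\tr(MN)=\sum_{i,j}\lambda_{i}\mu_{j}s_{ij}$ with $S$ doubly stochastic, one can also conclude by an Abel-summation (summation-by-parts) argument using only the row- and column-sum constraints; but as written your reduction and the swap argument for the rearrangement inequality are perfectly sound.
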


Using the singular value decomposition (SVD) of a matrix, we can derive some identities on $\|P_{B}-P_{A}\|_{F}^{2}$ and $\|P_{B^{\ast}}-P_{A^{\ast}}\|_{F}^{2}$. Let $A\in\mathbb{C}^{m\times n}_{r}$ and $B\in\mathbb{C}^{m\times n}_{s}$ (\textit{throughout this paper, we only consider the nontrivial case that $r\geq 1$ and $s\geq 1$}) have the following SVDs:
\begin{subequations}
\begin{align}
&A=U\begin{pmatrix}
\Sigma_{1} & 0 \\
0 & 0
\end{pmatrix}
V^{\ast}=U_{1}\Sigma_{1}V_{1}^{\ast},\label{SVD-A}\\ &B=\widetilde{U}\begin{pmatrix}
\widetilde{\Sigma}_{1} & 0 \\
0 & 0
\end{pmatrix}
\widetilde{V}^{\ast}=\widetilde{U}_{1}\widetilde{\Sigma}_{1}\widetilde{V}_{1}^{\ast},\label{SVD-B}
\end{align}
\end{subequations}
where $U=(U_{1},U_{2})\in\mathscr{U}_{m}$, $V=(V_{1},V_{2})\in\mathscr{U}_{n}$, $\widetilde{U}=(\widetilde{U}_{1},\widetilde{U}_{2})\in\mathscr{U}_{m}$, $\widetilde{V}=(\widetilde{V}_{1},\widetilde{V}_{2})\in\mathscr{U}_{n}$, $U_{1}\in\mathbb{C}^{m\times r}$, $V_{1}\in\mathbb{C}^{n\times r}$, $\widetilde{U}_{1}\in\mathbb{C}^{m\times s}$, $\widetilde{V}_{1}\in\mathbb{C}^{n\times s}$, $\Sigma_{1}=\diag(\sigma_{1},\ldots,\sigma_{r})$, $\widetilde{\Sigma}_{1}=\diag(\widetilde{\sigma}_{1},\ldots,\widetilde{\sigma}_{s})$, $\sigma_{1}\geq\cdots\geq\sigma_{r}>0$, and $\widetilde{\sigma}_{1}\geq\cdots\geq\widetilde{\sigma}_{s}>0$. In view of~\eqref{SVD-A} and~\eqref{SVD-B}, the Moore--Penrose inverses $A^{\dagger}$ and $B^{\dagger}$ can be explicitly expressed as follows:
\begin{subequations}
\begin{align}
&A^{\dagger}=V\begin{pmatrix}
\Sigma_{1}^{-1} & 0 \\
0 & 0
\end{pmatrix}
U^{\ast}=V_{1}\Sigma_{1}^{-1}U_{1}^{\ast},\label{MP-A}\\ &B^{\dagger}=\widetilde{V}\begin{pmatrix}
\widetilde{\Sigma}_{1}^{-1} & 0 \\
0 & 0
\end{pmatrix}
\widetilde{U}^{\ast}=\widetilde{V}_{1}\widetilde{\Sigma}_{1}^{-1}\widetilde{U}_{1}^{\ast}.\label{MP-B}
\end{align}
\end{subequations}
By~\eqref{SVD-A}, \eqref{SVD-B}, \eqref{MP-A}, and~\eqref{MP-B}, we have
\begin{displaymath}
P_{A}=AA^{\dagger}=U_{1}U_{1}^{\ast},\quad P_{A^{\ast}}=A^{\dagger}A=V_{1}V_{1}^{\ast},\quad P_{B}=BB^{\dagger}=\widetilde{U}_{1}\widetilde{U}_{1}^{\ast},\quad P_{B^{\ast}}=B^{\dagger}B=\widetilde{V}_{1}\widetilde{V}_{1}^{\ast}.
\end{displaymath}

The following lemma (see~\cite[Lemma 2.3]{Chen2016}) is the foundation of our analysis, which gives the expressions for $\|P_{B}-P_{A}\|_{F}^{2}$ and $\|P_{B^{\ast}}-P_{A^{\ast}}\|_{F}^{2}$.

\begin{lemma}\label{expression}
Let $A\in\mathbb{C}^{m\times n}_{r}$ and $B\in\mathbb{C}^{m\times n}_{s}$ have the SVDs~\eqref{SVD-A} and~\eqref{SVD-B}, respectively. Then
\begin{subequations}
\begin{align}
&\|P_{B}-P_{A}\|_{F}^{2}=\|\widetilde{U}_{1}^{\ast}U_{2}\|_{F}^{2}+\|\widetilde{U}_{2}^{\ast}U_{1}\|_{F}^{2},\label{exp1.1}\\
&\|P_{B^{\ast}}-P_{A^{\ast}}\|_{F}^{2}=\|\widetilde{V}_{1}^{\ast}V_{2}\|_{F}^{2}+\|\widetilde{V}_{2}^{\ast}V_{1}\|_{F}^{2}.\label{exp1.2}
\end{align}
\end{subequations}
In particular, if $s=r$, then
\begin{subequations}
\begin{align}
&\|P_{B}-P_{A}\|_{F}^{2}=2\|\widetilde{U}_{1}^{\ast}U_{2}\|_{F}^{2}=2\|\widetilde{U}_{2}^{\ast}U_{1}\|_{F}^{2},\label{exp2.1}\\
&\|P_{B^{\ast}}-P_{A^{\ast}}\|_{F}^{2}=2\|\widetilde{V}_{1}^{\ast}V_{2}\|_{F}^{2}=2\|\widetilde{V}_{2}^{\ast}V_{1}\|_{F}^{2}.\label{exp2.2}
\end{align}
\end{subequations}
\end{lemma}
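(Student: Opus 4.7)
The plan is to expand $\|P_{B}-P_{A}\|_{F}^{2}$ as a trace and then exploit the unitarity of the SVD factors to rewrite the cross term in two equivalent ways. First I would write
\[
\|P_{B}-P_{A}\|_{F}^{2}=\tr\!\big((P_{B}-P_{A})^{\ast}(P_{B}-P_{A})\big)=\tr(P_{A})+\tr(P_{B})-2\tr(P_{A}P_{B}),
\]
using that $P_{A}$ and $P_{B}$ are Hermitian idempotents. From the SVD-based formulas $P_{A}=U_{1}U_{1}^{\ast}$ and $P_{B}=\widetilde{U}_{1}\widetilde{U}_{1}^{\ast}$ recorded just before the lemma, $\tr(P_{A})=r$, $\tr(P_{B})=s$, and $\tr(P_{A}P_{B})=\tr(U_{1}^{\ast}\widetilde{U}_{1}\widetilde{U}_{1}^{\ast}U_{1})=\|\widetilde{U}_{1}^{\ast}U_{1}\|_{F}^{2}$.

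The next step is to express this cross-term quantity in two different ways via the unitary completeness relations $U_{1}U_{1}^{\ast}+U_{2}U_{2}^{\ast}=I_{m}$ and $\widetilde{U}_{1}\widetilde{U}_{1}^{\ast}+\widetilde{U}_{2}\widetilde{U}_{2}^{\ast}=I_{m}$. Left-multiplying the first by $\widetilde{U}_{1}^{\ast}$ and right-multiplying by $\widetilde{U}_{1}$, then taking traces, yields $\|\widetilde{U}_{1}^{\ast}U_{1}\|_{F}^{2}+\|\widetilde{U}_{1}^{\ast}U_{2}\|_{F}^{2}=s$; a symmetric manipulation using the second completeness relation produces $\|\widetilde{U}_{1}^{\ast}U_{1}\|_{F}^{2}+\|\widetilde{U}_{2}^{\ast}U_{1}\|_{F}^{2}=r$. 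Adding these two identities gives $2\|\widetilde{U}_{1}^{\ast}U_{1}\|_{F}^{2}=r+s-\|\widetilde{U}_{1}^{\ast}U_{2}\|_{F}^{2}-\|\widetilde{U}_{2}^{\ast}U_{1}\|_{F}^{2}$, and substituting back into the trace expansion immediately delivers~\eqref{exp1.1}.

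For~\eqref{exp1.2}, I would simply repeat the same argument verbatim with $(U,\widetilde{U})$ replaced by $(V,\widetilde{V})$, since $P_{A^{\ast}}=V_{1}V_{1}^{\ast}$ and $P_{B^{\ast}}=\widetilde{V}_{1}\widetilde{V}_{1}^{\ast}$ are structurally analogous. For the special case $s=r$, the two completeness identities above both reduce to $\|\widetilde{U}_{1}^{\ast}U_{1}\|_{F}^{2}+\|\widetilde{U}_{1}^{\ast}U_{2}\|_{F}^{2}=r=\|\widetilde{U}_{1}^{\ast}U_{1}\|_{F}^{2}+\|\widetilde{U}_{2}^{\ast}U_{1}\|_{F}^{2}$, forcing $\|\widetilde{U}_{1}^{\ast}U_{2}\|_{F}^{2}=\|\widetilde{U}_{2}^{\ast}U_{1}\|_{F}^{2}$; combined with~\eqref{exp1.1} this gives~\eqref{exp2.1}, and~\eqref{exp2.2} follows in the same way.

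There is no real obstacle here; the proof is almost entirely bookkeeping. The only step that requires a moment's thought is noticing that two distinct completeness relations must be invoked (one for $U$, one for $\widetilde{U}$) in order to produce both $\|\widetilde{U}_{1}^{\ast}U_{2}\|_{F}^{2}$ and $\|\widetilde{U}_{2}^{\ast}U_{1}\|_{F}^{2}$ on the right-hand side, rather than naively ending up with a bound containing only one of them.
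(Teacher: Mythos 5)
Your proof is correct, and every step checks out: the trace expansion $\|P_{B}-P_{A}\|_{F}^{2}=s+r-2\tr(P_{A}P_{B})$ with $\tr(P_{A}P_{B})=\|\widetilde{U}_{1}^{\ast}U_{1}\|_{F}^{2}$, the two completeness relations $U_{1}U_{1}^{\ast}+U_{2}U_{2}^{\ast}=I_{m}$ and $\widetilde{U}_{1}\widetilde{U}_{1}^{\ast}+\widetilde{U}_{2}\widetilde{U}_{2}^{\ast}=I_{m}$ giving $\|\widetilde{U}_{1}^{\ast}U_{1}\|_{F}^{2}+\|\widetilde{U}_{1}^{\ast}U_{2}\|_{F}^{2}=s$ and $\|\widetilde{U}_{1}^{\ast}U_{1}\|_{F}^{2}+\|\widetilde{U}_{2}^{\ast}U_{1}\|_{F}^{2}=r$, and the observation that for $s=r$ these force $\|\widetilde{U}_{1}^{\ast}U_{2}\|_{F}^{2}=\|\widetilde{U}_{2}^{\ast}U_{1}\|_{F}^{2}$. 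Note, however, that the paper does not prove this lemma at all: it is quoted from Chen et al.\ (cited as Lemma 2.3 there), so there is no in-paper argument to compare against; your write-up supplies a self-contained elementary proof, and its opening trace identity is in fact the same computation the paper later uses in proving the rank-based bounds~\eqref{rank1.1} and~\eqref{rank1.2}, where $\|P_{B}-P_{A}\|_{F}^{2}=s+r-2\tr(P_{B}P_{A})$ appears explicitly. An alternative route you could mention, closer in flavor to what is typically done in the cited source, is to split $P_{B}-P_{A}=P_{B}(I-P_{A})-(I-P_{B})P_{A}$ and note the two terms are orthogonal in the Frobenius inner product, giving $\|P_{B}-P_{A}\|_{F}^{2}=\|\widetilde{U}_{1}\widetilde{U}_{1}^{\ast}U_{2}U_{2}^{\ast}\|_{F}^{2}+\|\widetilde{U}_{2}\widetilde{U}_{2}^{\ast}U_{1}U_{1}^{\ast}\|_{F}^{2}$, which reduces to~\eqref{exp1.1} at once; but your bookkeeping argument is equally valid and requires nothing beyond unitarity.
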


Based on Lemma~\ref{expression}, we can get the following identities on $\|P_{B}-P_{A}\|_{F}^{2}$ and $\|P_{B^{\ast}}-P_{A^{\ast}}\|_{F}^{2}$, which do not involve the auxiliary matrices $U_{i}$, $\widetilde{U}_{i}$, $V_{i}$, and $\widetilde{V}_{i}$ ($i=1,2$).

\begin{lemma}\label{identity}
Let $A\in\mathbb{C}^{m\times n}_{r}$, $B\in\mathbb{C}^{m\times n}_{s}$, and $E=B-A$. Then
\begin{subequations}
\begin{align}
&\|P_{B}-P_{A}\|_{F}^{2}=\|EA^{\dagger}\|_{F}^{2}+\|EB^{\dagger}\|_{F}^{2}-\|BB^{\dagger}EA^{\dagger}\|_{F}^{2}-\|AA^{\dagger}EB^{\dagger}\|_{F}^{2},\label{ide1.1}\\
&\|P_{B^{\ast}}-P_{A^{\ast}}\|_{F}^{2}=\|A^{\dagger}E\|_{F}^{2}+\|B^{\dagger}E\|_{F}^{2}-\|A^{\dagger}EB^{\dagger}B\|_{F}^{2}-\|B^{\dagger}EA^{\dagger}A\|_{F}^{2}.\label{ide1.2}
\end{align}
\end{subequations}
In particular, if $s=r$, then
\begin{subequations}
\begin{align}
&\|P_{B}-P_{A}\|_{F}^{2}=2\big(\|EA^{\dagger}\|_{F}^{2}-\|BB^{\dagger}EA^{\dagger}\|_{F}^{2}\big)=2\big(\|EB^{\dagger}\|_{F}^{2}-\|AA^{\dagger}EB^{\dagger}\|_{F}^{2}\big),\label{ide2.1}\\
&\|P_{B^{\ast}}-P_{A^{\ast}}\|_{F}^{2}=2\big(\|A^{\dagger}E\|_{F}^{2}-\|A^{\dagger}EB^{\dagger}B\|_{F}^{2}\big)=2\big(\|B^{\dagger}E\|_{F}^{2}-\|B^{\dagger}EA^{\dagger}A\|_{F}^{2}\big).\label{ide2.2}
\end{align}
\end{subequations}
\end{lemma}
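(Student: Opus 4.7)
The plan is to express each Frobenius norm on the right-hand side of \eqref{ide1.1} via the Pythagorean theorem relative to an appropriate orthogonal projection, and then match the resulting quantities with the SVD-based formulas in Lemma~\ref{expression}.

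\textbf{Step 1 (Pythagorean split).} For any matrix $X$ and any orthogonal projection $P$, one has $\|X\|_{F}^{2}=\|PX\|_{F}^{2}+\|(I-P)X\|_{F}^{2}$. Applying this with $P=P_{B}=BB^{\dagger}$ and $X=EA^{\dagger}$, and with $P=P_{A}=AA^{\dagger}$ and $X=EB^{\dagger}$, gives
\begin{equation*}
\|EA^{\dagger}\|_{F}^{2}-\|BB^{\dagger}EA^{\dagger}\|_{F}^{2}=\|(I-P_{B})EA^{\dagger}\|_{F}^{2},\qquad \|EB^{\dagger}\|_{F}^{2}-\|AA^{\dagger}EB^{\dagger}\|_{F}^{2}=\|(I-P_{A})EB^{\dagger}\|_{F}^{2}.
\end{equation*}

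\textbf{Step 2 (Cancellation using $E=B-A$).} Since $P_{B}B=B$ and $AA^{\dagger}=P_{A}$, I would observe that
\begin{equation*}
(I-P_{B})EA^{\dagger}=(I-P_{B})(B-A)A^{\dagger}=-(I-P_{B})P_{A},
\end{equation*}
and analogously $(I-P_{A})EB^{\dagger}=(I-P_{A})P_{B}$ (using $P_{A}A=A$ and $BB^{\dagger}=P_{B}$). Thus the right-hand side of \eqref{ide1.1} collapses to $\|(I-P_{B})P_{A}\|_{F}^{2}+\|(I-P_{A})P_{B}\|_{F}^{2}$.

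\textbf{Step 3 (Matching with Lemma~\ref{expression}).} Using $P_{A}=U_{1}U_{1}^{\ast}$, $I-P_{A}=U_{2}U_{2}^{\ast}$, $P_{B}=\widetilde{U}_{1}\widetilde{U}_{1}^{\ast}$, and $I-P_{B}=\widetilde{U}_{2}\widetilde{U}_{2}^{\ast}$, together with the fact that pre- or post-multiplication by matrices with orthonormal columns preserves the Frobenius norm, I obtain
\begin{equation*}
\|(I-P_{B})P_{A}\|_{F}^{2}=\|\widetilde{U}_{2}\widetilde{U}_{2}^{\ast}U_{1}U_{1}^{\ast}\|_{F}^{2}=\|\widetilde{U}_{2}^{\ast}U_{1}\|_{F}^{2},\qquad \|(I-P_{A})P_{B}\|_{F}^{2}=\|\widetilde{U}_{1}^{\ast}U_{2}\|_{F}^{2}.
\end{equation*}
Combined with \eqref{exp1.1}, this establishes identity \eqref{ide1.1}. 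Identity \eqref{ide1.2} then follows immediately by applying \eqref{ide1.1} to the pair $(A^{\ast},B^{\ast})$ (with perturbation $E^{\ast}$), since $(A^{\ast})^{\dagger}=(A^{\dagger})^{\ast}$ and $P_{(A^{\ast})}=P_{A^{\ast}}$.

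\textbf{Step 4 (The case $s=r$).} When $s=r$, \eqref{exp2.1} gives $\|P_{B}-P_{A}\|_{F}^{2}=2\|\widetilde{U}_{2}^{\ast}U_{1}\|_{F}^{2}=2\|\widetilde{U}_{1}^{\ast}U_{2}\|_{F}^{2}$, and by Steps~1--3 these two quantities equal $2(\|EA^{\dagger}\|_{F}^{2}-\|BB^{\dagger}EA^{\dagger}\|_{F}^{2})$ and $2(\|EB^{\dagger}\|_{F}^{2}-\|AA^{\dagger}EB^{\dagger}\|_{F}^{2})$ respectively, yielding \eqref{ide2.1}. Identity \eqref{ide2.2} is obtained in the same way from \eqref{exp2.2}. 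I do not anticipate a real obstacle: the only subtle points are the cancellations $(I-P_{B})B=0$ and $(I-P_{A})A=0$ in Step~2 and the recognition that Step~3 is just the unitary invariance of $\|\cdot\|_{F}$ applied to the block structure of the SVDs.
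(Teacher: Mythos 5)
Your proof is correct, and it reaches the same two key intermediate identities as the paper---$\|EA^{\dagger}\|_{F}^{2}-\|BB^{\dagger}EA^{\dagger}\|_{F}^{2}=\|\widetilde{U}_{2}^{\ast}U_{1}\|_{F}^{2}$ and $\|EB^{\dagger}\|_{F}^{2}-\|AA^{\dagger}EB^{\dagger}\|_{F}^{2}=\|\widetilde{U}_{1}^{\ast}U_{2}\|_{F}^{2}$---before finishing with Lemma~\ref{expression}, handling \eqref{ide1.2} via the pair $(A^{\ast},B^{\ast})$ and the case $s=r$ via \eqref{exp2.1}--\eqref{exp2.2} exactly as the paper does; but your derivation of those identities is genuinely different. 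The paper works entirely in SVD coordinates: it computes the block forms of $\widetilde{U}^{\ast}EA^{\dagger}U$ and $\widetilde{U}^{\ast}BB^{\dagger}EA^{\dagger}U$ (and their counterparts for $EB^{\dagger}$), reads off the Frobenius norms blockwise, and subtracts. You instead argue coordinate-free: the Pythagorean identity $\|X\|_{F}^{2}=\|PX\|_{F}^{2}+\|(I-P)X\|_{F}^{2}$ for the Hermitian projections $P_{A}$, $P_{B}$, combined with the cancellations $(I-P_{B})B=0$ and $(I-P_{A})A=0$, collapses the right-hand side of \eqref{ide1.1} to $\|(I-P_{B})P_{A}\|_{F}^{2}+\|(I-P_{A})P_{B}\|_{F}^{2}$, and the SVD enters only at the last moment to identify these with $\|\widetilde{U}_{2}^{\ast}U_{1}\|_{F}^{2}$ and $\|\widetilde{U}_{1}^{\ast}U_{2}\|_{F}^{2}$. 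What your route buys is brevity and transparency: the cancellation $(I-P_{B})EA^{\dagger}=-(I-P_{B})P_{A}$ makes the identity nearly self-evident, and indeed $\|(I-P_{B})P_{A}\|_{F}^{2}+\|(I-P_{A})P_{B}\|_{F}^{2}=\|P_{B}-P_{A}\|_{F}^{2}$ could even be verified directly, without invoking Lemma~\ref{expression} at all. What the paper's computational route buys is reusable byproducts: the explicit block identities \eqref{rela1.1}--\eqref{rela2.2} produced along the way are quoted again in the proofs of the later results (for example Theorem~\ref{up-thm1} and Corollary~\ref{up-cor}), so in the context of the whole paper the coordinate computations are not wasted effort.
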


\begin{proof}
By~\eqref{SVD-A}, \eqref{SVD-B}, \eqref{MP-A}, and~\eqref{MP-B}, we have
\begin{align*}
&\widetilde{U}^{\ast}EA^{\dagger}U=\begin{pmatrix}
\widetilde{\Sigma}_{1}\widetilde{V}_{1}^{\ast}V_{1}\Sigma_{1}^{-1}-\widetilde{U}_{1}^{\ast}U_{1} & 0\\
-\widetilde{U}_{2}^{\ast}U_{1} & 0
\end{pmatrix},\\
&\widetilde{U}^{\ast}BB^{\dagger}EA^{\dagger}U=\begin{pmatrix}
\widetilde{\Sigma}_{1}\widetilde{V}_{1}^{\ast}V_{1}\Sigma_{1}^{-1}-\widetilde{U}_{1}^{\ast}U_{1} & 0\\
0 & 0
\end{pmatrix}.
\end{align*}
Hence,
\begin{align}
&\|EA^{\dagger}\|_{F}^{2}=\|\widetilde{\Sigma}_{1}\widetilde{V}_{1}^{\ast}V_{1}\Sigma_{1}^{-1}-\widetilde{U}_{1}^{\ast}U_{1}\|_{F}^{2}+\|\widetilde{U}_{2}^{\ast}U_{1}\|_{F}^{2},\label{rela1.1}\\
&\|BB^{\dagger}EA^{\dagger}\|_{F}^{2}=\|\widetilde{\Sigma}_{1}\widetilde{V}_{1}^{\ast}V_{1}\Sigma_{1}^{-1}-\widetilde{U}_{1}^{\ast}U_{1}\|_{F}^{2}.\label{rela1.2}
\end{align}
Using~\eqref{rela1.1} and~\eqref{rela1.2}, we obtain
\begin{equation}\label{part1}
\|\widetilde{U}_{2}^{\ast}U_{1}\|_{F}^{2}=\|EA^{\dagger}\|_{F}^{2}-\|BB^{\dagger}EA^{\dagger}\|_{F}^{2}.
\end{equation}
Similarly, we have
\begin{align*}
&U^{\ast}EB^{\dagger}\widetilde{U}=\begin{pmatrix}
U_{1}^{\ast}\widetilde{U}_{1}-\Sigma_{1}V_{1}^{\ast}\widetilde{V}_{1}\widetilde{\Sigma}_{1}^{-1} & 0\\
U_{2}^{\ast}\widetilde{U}_{1} & 0
\end{pmatrix},\\
&U^{\ast}AA^{\dagger}EB^{\dagger}\widetilde{U}=\begin{pmatrix}
U_{1}^{\ast}\widetilde{U}_{1}-\Sigma_{1}V_{1}^{\ast}\widetilde{V}_{1}\widetilde{\Sigma}_{1}^{-1} & 0\\
0 & 0
\end{pmatrix}.
\end{align*}
Thus,
\begin{align}
&\|EB^{\dagger}\|_{F}^{2}=\|U_{1}^{\ast}\widetilde{U}_{1}-\Sigma_{1}V_{1}^{\ast}\widetilde{V}_{1}\widetilde{\Sigma}_{1}^{-1}\|_{F}^{2}+\|\widetilde{U}_{1}^{\ast}U_{2}\|_{F}^{2},\label{rela2.1}\\
&\|AA^{\dagger}EB^{\dagger}\|_{F}^{2}=\|U_{1}^{\ast}\widetilde{U}_{1}-\Sigma_{1}V_{1}^{\ast}\widetilde{V}_{1}\widetilde{\Sigma}_{1}^{-1}\|_{F}^{2}.\label{rela2.2}
\end{align}
From~\eqref{rela2.1} and~\eqref{rela2.2}, we have
\begin{equation}\label{part2}
\|\widetilde{U}_{1}^{\ast}U_{2}\|_{F}^{2}=\|EB^{\dagger}\|_{F}^{2}-\|AA^{\dagger}EB^{\dagger}\|_{F}^{2}.
\end{equation}
The identity~\eqref{ide1.1} then follows by combining~\eqref{exp1.1}, \eqref{part1}, and~\eqref{part2}. In particular, if $s=r$, using~\eqref{exp2.1}, \eqref{part1}, and~\eqref{part2}, we can obtain the identity~\eqref{ide2.1}.

Replacing $A$ and $B$ in~\eqref{ide1.1} by $A^{\ast}$ and $B^{\ast}$, respectively, we can arrive at the identity~\eqref{ide1.2}. Analogously, the identity~\eqref{ide2.2} can be deduced from~\eqref{ide2.1}. This completes the proof.
\end{proof}

On the basis of Lemma~\ref{identity}, we can easily get the following corollary.

\begin{corollary}\label{identity-c}
Let $A\in\mathbb{C}^{m\times n}_{r}$, $B\in\mathbb{C}^{m\times n}_{s}$, and $\widetilde{E}=B^{\dagger}-A^{\dagger}$. Then
\begin{align*}
&\|P_{B}-P_{A}\|_{F}^{2}=\|A\widetilde{E}\|_{F}^{2}+\|B\widetilde{E}\|_{F}^{2}-\|A\widetilde{E}BB^{\dagger}\|_{F}^{2}-\|B\widetilde{E}AA^{\dagger}\|_{F}^{2},\\
&\|P_{B^{\ast}}-P_{A^{\ast}}\|_{F}^{2}=\|\widetilde{E}A\|_{F}^{2}+\|\widetilde{E}B\|_{F}^{2}-\|B^{\dagger}B\widetilde{E}A\|_{F}^{2}-\|A^{\dagger}A\widetilde{E}B\|_{F}^{2}.
\end{align*}
In particular, if $s=r$, then
\begin{align*}
&\|P_{B}-P_{A}\|_{F}^{2}=2\big(\|A\widetilde{E}\|_{F}^{2}-\|A\widetilde{E}BB^{\dagger}\|_{F}^{2}\big)=2\big(\|B\widetilde{E}\|_{F}^{2}-\|B\widetilde{E}AA^{\dagger}\|_{F}^{2}\big),\\
&\|P_{B^{\ast}}-P_{A^{\ast}}\|_{F}^{2}=2\big(\|\widetilde{E}A\|_{F}^{2}-\|B^{\dagger}B\widetilde{E}A\|_{F}^{2}\big)=2\big(\|\widetilde{E}B\|_{F}^{2}-\|A^{\dagger}A\widetilde{E}B\|_{F}^{2}\big).
\end{align*}
\end{corollary}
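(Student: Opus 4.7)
The plan is to observe that Corollary~\ref{identity-c} is exactly Lemma~\ref{identity} applied to the pair $(A^{\dagger},B^{\dagger})$ in place of $(A,B)$. Two elementary properties of the Moore--Penrose inverse make this substitution clean: first, $(M^{\dagger})^{\dagger}=M$ for every $M$, so applying $(\cdot)^{\dagger}$ to the hypothetical matrices $A^{\dagger}$ and $B^{\dagger}$ returns $A$ and $B$; second, reading off the SVDs~\eqref{SVD-A}--\eqref{MP-B}, the column space of $A^{\dagger}$ is $\Range(V_{1})$, which coincides with the column space of $A^{\ast}$, whence $P_{A^{\dagger}}=P_{A^{\ast}}$ and $P_{(A^{\dagger})^{\ast}}=P_{A}$, and similarly for $B$.

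First I would record these two identifications, which can be verified in one line each from the SVDs already fixed in the preliminaries. Then I would apply~\eqref{ide1.2} to the pair $(A^{\dagger},B^{\dagger})$ with perturbation $\widetilde{E}=B^{\dagger}-A^{\dagger}$; the left-hand side becomes $\|P_{(B^{\dagger})^{\ast}}-P_{(A^{\dagger})^{\ast}}\|_{F}^{2}=\|P_{B}-P_{A}\|_{F}^{2}$, while the right-hand side rewrites, using $(A^{\dagger})^{\dagger}=A$ and $(B^{\dagger})^{\dagger}=B$, as
\begin{equation*}
\|A\widetilde{E}\|_{F}^{2}+\|B\widetilde{E}\|_{F}^{2}-\|A\widetilde{E}BB^{\dagger}\|_{F}^{2}-\|B\widetilde{E}AA^{\dagger}\|_{F}^{2},
\end{equation*}
which is exactly the first stated identity. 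Next I would apply~\eqref{ide1.1} to the same pair $(A^{\dagger},B^{\dagger})$; the left-hand side becomes $\|P_{B^{\dagger}}-P_{A^{\dagger}}\|_{F}^{2}=\|P_{B^{\ast}}-P_{A^{\ast}}\|_{F}^{2}$, and expanding the right-hand side with the same substitutions yields the second identity.

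For the special case $s=r$, the matrices $A^{\dagger}$ and $B^{\dagger}$ again share a common rank $r$, so the hypothesis of the $s=r$ part of Lemma~\ref{identity} is met; the two displayed equalities in Corollary~\ref{identity-c} for $s=r$ then follow by applying~\eqref{ide2.2} and~\eqref{ide2.1}, respectively, to $(A^{\dagger},B^{\dagger})$ and performing the same rewriting.

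I do not anticipate a real obstacle here; the content is a bookkeeping exercise in the symmetries of Lemma~\ref{identity} under the involution $M\mapsto M^{\dagger}$. The only point that deserves care is matching the four projector slots correctly: specifically, making sure that when Lemma~\ref{identity} is invoked with roles swapped, the factors $BB^{\dagger}$ and $AA^{\dagger}$ that arise on the right come out in the order given in the statement (so that $B\widetilde{E}AA^{\dagger}$, not $BB^{\dagger}\widetilde{E}A$, appears in the first identity). This just requires keeping track of which of~\eqref{ide1.1} and~\eqref{ide1.2} is being applied at each step, as described above.
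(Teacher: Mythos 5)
Your proposal is correct and matches the paper's intended argument: the paper derives this corollary directly "in view of Lemma~\ref{identity}", i.e., by applying that lemma to the pair $(A^{\dagger},B^{\dagger})$, exactly as you do, using $(M^{\dagger})^{\dagger}=M$, $P_{M^{\dagger}}=P_{M^{\ast}}$, and $P_{(M^{\dagger})^{\ast}}=P_{M}$. Your bookkeeping of which of~\eqref{ide1.1}/\eqref{ide1.2} (and~\eqref{ide2.1}/\eqref{ide2.2}) yields which displayed identity is also right.
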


In what follows, we will apply Lemmas~\ref{expression} and~\ref{identity} to establish the perturbation bounds for an $L^{2}$-orthogonal projection. The corresponding results based on Corollary~\ref{identity-c} can be derived in a similar manner.

\section{Main results}

\label{sec:main}

\setcounter{equation}{0}

In this section, we present new upper and lower bounds for $\|P_{B}-P_{A}\|_{F}^{2}$. Some novel combined upper and lower bounds for $\|P_{B}-P_{A}\|_{F}^{2}$ and $\|P_{B^{\ast}}-P_{A^{\ast}}\|_{F}^{2}$ are also developed. We mention that the upper and lower bounds for $\|P_{B^{\ast}}-P_{A^{\ast}}\|_{F}^{2}$ will be omitted, because they can be directly deduced from that for $\|P_{B}-P_{A}\|_{F}^{2}$.

We first give an estimate for $\|P_{B}-P_{A}\|_{F}^{2}$, which depends only on the ranks of $A$ and $B$.

\begin{theorem}
Let $A\in\mathbb{C}^{m\times n}_{r}$ and $B\in\mathbb{C}^{m\times n}_{s}$.

{\rm (i)} If $s+r\leq m$, then
\begin{equation}\label{rank1.1}
|s-r|\leq\|P_{B}-P_{A}\|_{F}^{2}\leq s+r.
\end{equation}

{\rm (ii)} If $s+r>m$, then
\begin{equation}\label{rank1.2}
|s-r|\leq\|P_{B}-P_{A}\|_{F}^{2}\leq 2m-s-r.
\end{equation}
\end{theorem}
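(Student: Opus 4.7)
The plan is to reduce everything to a single quantity, $\tr(P_{A}P_{B})$, and then pin it between two explicit values by invoking the trace inequality in Lemma 2.1. The idea is that $P_{A}$ and $P_{B}$ are Hermitian projectors whose spectra are completely determined by the ranks $r$ and $s$: each has only $0$'s and $1$'s, with exactly $r$ and $s$ ones respectively.

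First I would expand the Frobenius norm via traces. Since $P_{A}$ and $P_{B}$ are Hermitian idempotents of size $m\times m$,
\begin{equation*}
\|P_{B}-P_{A}\|_{F}^{2}=\tr\!\big((P_{B}-P_{A})^{2}\big)=\tr(P_{A})+\tr(P_{B})-2\tr(P_{A}P_{B})=r+s-2\tr(P_{A}P_{B}).
\end{equation*}
So both halves of \eqref{rank1.1}–\eqref{rank1.2} are equivalent to sharp two-sided bounds on $\tr(P_{A}P_{B})$.

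Next I would apply \eqref{trace} to the Hermitian pair $M=P_{A}$, $N=P_{B}$. Writing the sorted eigenvalues in the form $\lambda_{1}=\cdots=\lambda_{r}=1$, $\lambda_{r+1}=\cdots=\lambda_{m}=0$ for $P_{A}$ and analogously for $P_{B}$, the upper estimate $\sum_{i}\lambda_{i}\mu_{i}$ pairs the $1$'s of $P_{A}$ with the $1$'s of $P_{B}$, giving
\begin{equation*}
\tr(P_{A}P_{B})\leq\min\{r,s\}.
\end{equation*}
The lower estimate $\sum_{i}\lambda_{i}\mu_{m-i+1}$ pairs the $i$-th largest eigenvalue of $P_{A}$ with the $i$-th smallest of $P_{B}$; the product is nonzero precisely when $i\leq r$ and $m-i+1\leq s$, i.e.\ $\max\{1,m-s+1\}\leq i\leq r$. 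Counting these indices yields
\begin{equation*}
\tr(P_{A}P_{B})\geq\max\{0,\,r+s-m\}.
\end{equation*}

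Finally I would substitute these two inequalities into $\|P_{B}-P_{A}\|_{F}^{2}=r+s-2\tr(P_{A}P_{B})$ and split into the two cases stated in the theorem. When $s+r\leq m$ the lower bound on the trace is $0$ and the upper bound is $\min\{r,s\}$, producing $|s-r|\leq\|P_{B}-P_{A}\|_{F}^{2}\leq s+r$, which is \eqref{rank1.1}. When $s+r>m$ the lower bound on the trace becomes $r+s-m$, giving the upper bound $2m-s-r$ in \eqref{rank1.2}, while the lower bound $|s-r|$ is unchanged. The only conceptual step that needs care — and the one I would regard as the main obstacle — is the correct bookkeeping of the index range in the lower trace inequality, which is exactly where the dichotomy between $s+r\leq m$ and $s+r>m$ enters the argument.
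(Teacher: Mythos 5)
Your proposal is correct and follows essentially the same route as the paper: expand $\|P_{B}-P_{A}\|_{F}^{2}=s+r-2\tr(P_{B}P_{A})$ and bound $\tr(P_{B}P_{A})$ via the trace inequality \eqref{trace} applied to the $0$--$1$ spectra of the two projectors, with the dichotomy $s+r\leq m$ versus $s+r>m$ coming from the lower pairing $\max\{0,\,r+s-m\}$. Your index bookkeeping for the lower estimate is accurate, so nothing is missing.
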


\begin{proof}
Since both $P_{A}$ and $P_{B}$ are Hermitian and idempotent, we have
\begin{displaymath}
\|P_{B}-P_{A}\|_{F}^{2}=\tr(P_{B}+P_{A}-P_{B}P_{A}-P_{A}P_{B})=s+r-2\tr(P_{B}P_{A}),
\end{displaymath}
where we have used the fact that the trace of an idempotent matrix equals its rank.

If $s+r\leq m$, by~\eqref{trace}, we have
\begin{displaymath}
0\leq\tr(P_{B}P_{A})\leq\min\{s,r\},
\end{displaymath}
which yields
\begin{displaymath}
|s-r|\leq\|P_{B}-P_{A}\|_{F}^{2}\leq s+r.
\end{displaymath}
On the other hand, if $s+r>m$, then
\begin{displaymath}
s+r-m\leq\tr(P_{B}P_{A})\leq\min\{s,r\},
\end{displaymath}
which leads to
\begin{displaymath}
|s-r|\leq\|P_{B}-P_{A}\|_{F}^{2}\leq 2m-s-r.
\end{displaymath}
This completes the proof.
\end{proof}

\begin{remark}\rm
According to the lower bounds in~\eqref{rank1.1} and~\eqref{rank1.2}, we deduce that a necessary condition for $\lim\limits_{B\rightarrow A}P_{B}=P_{A}$ ($B$ is viewed as a variable) is that $\rank(B)=\rank(A)$ always holds when $B$ tends to $A$. Indeed, it is also a sufficient condition for $\lim\limits_{B\rightarrow A}P_{B}=P_{A}$ (see~\cite{Sun1984,Sun2001}).
\end{remark}

In what follows, we develop some perturbation bounds involving the matrices $E=B-A$ and $\widetilde{E}=B^{\dagger}-A^{\dagger}$.

\subsection{Upper bounds}

\label{subsec:up}

In this subsection, we present several new upper bounds for $\|P_{B}-P_{A}\|_{F}^{2}$, which improve the existing results.

On the basis of~\eqref{ide1.1} and~\eqref{ide2.1}, we can derive the following estimates for $\|P_{B}-P_{A}\|_{F}^{2}$, which are sharper than~\eqref{Chen1} and~\eqref{Chen2}.

\begin{theorem}\label{up-thm1}
Let $A\in\mathbb{C}^{m\times n}_{r}$, $B\in\mathbb{C}^{m\times n}_{s}$, $E=B-A$, and $\widetilde{E}=B^{\dagger}-A^{\dagger}$. Define
\begin{align*}
&\alpha_{1}:=\max\bigg\{\frac{\|B^{\dagger}EA^{\dagger}\|_{F}^{2}}{\|B^{\dagger}\|_{2}^{2}},\frac{\|B\widetilde{E}A\|_{F}^{2}}{\|A\|_{2}^{2}}\bigg\},\\ &\alpha_{2}:=\max\bigg\{\frac{\|A^{\dagger}EB^{\dagger}\|_{F}^{2}}{\|A^{\dagger}\|_{2}^{2}},\frac{\|A\widetilde{E}B\|_{F}^{2}}{\|B\|_{2}^{2}}\bigg\}.
\end{align*}
Then
\begin{equation}\label{up1.1}
\|P_{B}-P_{A}\|_{F}^{2}\leq\|EA^{\dagger}\|_{F}^{2}+\|EB^{\dagger}\|_{F}^{2}-\alpha_{1}-\alpha_{2}.
\end{equation}
In particular, if $s=r$, then
\begin{equation}\label{up1.2}
\|P_{B}-P_{A}\|_{F}^{2}\leq 2\min\big\{\|EA^{\dagger}\|_{F}^{2}-\alpha_{1},\|EB^{\dagger}\|_{F}^{2}-\alpha_{2}\big\}.
\end{equation}
\end{theorem}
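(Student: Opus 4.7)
The plan is to start from the identity~\eqref{ide1.1} in Lemma~\ref{identity}, which gives
\[
\|P_{B}-P_{A}\|_{F}^{2}=\|EA^{\dagger}\|_{F}^{2}+\|EB^{\dagger}\|_{F}^{2}-\|BB^{\dagger}EA^{\dagger}\|_{F}^{2}-\|AA^{\dagger}EB^{\dagger}\|_{F}^{2},
\]
so the estimate~\eqref{up1.1} reduces to the two lower bounds
\[
\|BB^{\dagger}EA^{\dagger}\|_{F}^{2}\geq\alpha_{1},\qquad \|AA^{\dagger}EB^{\dagger}\|_{F}^{2}\geq\alpha_{2}.
\]
The upper bound~\eqref{up1.2} in the equal-rank case will then follow at once by feeding the same two lower bounds into the sharper identity~\eqref{ide2.1} and taking the minimum.

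To handle $\|BB^{\dagger}EA^{\dagger}\|_{F}^{2}$ I would establish two independent lower bounds and then take their maximum. The first uses the Moore--Penrose identity $B^{\dagger}=B^{\dagger}BB^{\dagger}$ to write $B^{\dagger}EA^{\dagger}=B^{\dagger}(BB^{\dagger}EA^{\dagger})$, whence the submultiplicativity of the Frobenius norm with respect to the spectral norm gives $\|B^{\dagger}EA^{\dagger}\|_{F}\leq\|B^{\dagger}\|_{2}\|BB^{\dagger}EA^{\dagger}\|_{F}$, i.e.\ the bound $\|BB^{\dagger}EA^{\dagger}\|_{F}^{2}\geq\|B^{\dagger}EA^{\dagger}\|_{F}^{2}/\|B^{\dagger}\|_{2}^{2}$. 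The second is more delicate and is the \emph{main obstacle}: one needs an algebraic identity connecting $BB^{\dagger}EA^{\dagger}$ and $B\widetilde{E}A$. The trick is to expand
\[
B\widetilde{E}AA^{\dagger}=BB^{\dagger}AA^{\dagger}-BA^{\dagger}AA^{\dagger}=BB^{\dagger}AA^{\dagger}-BA^{\dagger},
\]
using $A^{\dagger}AA^{\dagger}=A^{\dagger}$, and
\[
BB^{\dagger}EA^{\dagger}=BB^{\dagger}BA^{\dagger}-BB^{\dagger}AA^{\dagger}=BA^{\dagger}-BB^{\dagger}AA^{\dagger},
\]
using $BB^{\dagger}B=B$; comparing the two shows $B\widetilde{E}AA^{\dagger}=-BB^{\dagger}EA^{\dagger}$, so they have equal Frobenius norms. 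Together with $B\widetilde{E}A=(B\widetilde{E}AA^{\dagger})A$ and $\|B\widetilde{E}A\|_{F}\leq\|B\widetilde{E}AA^{\dagger}\|_{F}\|A\|_{2}$, this yields $\|BB^{\dagger}EA^{\dagger}\|_{F}^{2}\geq\|B\widetilde{E}A\|_{F}^{2}/\|A\|_{2}^{2}$. Taking the maximum of the two lower bounds gives $\|BB^{\dagger}EA^{\dagger}\|_{F}^{2}\geq\alpha_{1}$.

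The bound $\|AA^{\dagger}EB^{\dagger}\|_{F}^{2}\geq\alpha_{2}$ follows by a symmetric argument, interchanging the roles of $A$ and $B$; the key companion identity is $A\widetilde{E}BB^{\dagger}=-AA^{\dagger}EB^{\dagger}$, proved the same way. Substituting these two lower bounds into~\eqref{ide1.1} yields~\eqref{up1.1}. Finally, when $s=r$, the identity~\eqref{ide2.1} provides the two alternative expressions
\[
\|P_{B}-P_{A}\|_{F}^{2}=2\bigl(\|EA^{\dagger}\|_{F}^{2}-\|BB^{\dagger}EA^{\dagger}\|_{F}^{2}\bigr)=2\bigl(\|EB^{\dagger}\|_{F}^{2}-\|AA^{\dagger}EB^{\dagger}\|_{F}^{2}\bigr),
\]
into which I apply the already-proved inequalities $\|BB^{\dagger}EA^{\dagger}\|_{F}^{2}\geq\alpha_{1}$ and $\|AA^{\dagger}EB^{\dagger}\|_{F}^{2}\geq\alpha_{2}$ respectively, and take the minimum to obtain~\eqref{up1.2}. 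The routine part is the submultiplicative estimates; the nontrivial step to watch is the telescoping identity $B\widetilde{E}AA^{\dagger}=-BB^{\dagger}EA^{\dagger}$, which is what allows the new quantities involving $\widetilde{E}$ to enter the bound alongside the familiar ones involving $E$.
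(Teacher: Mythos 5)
Your argument is correct, and it proves exactly the two inequalities the paper needs, namely $\|BB^{\dagger}EA^{\dagger}\|_{F}^{2}\geq\alpha_{1}$ and $\|AA^{\dagger}EB^{\dagger}\|_{F}^{2}\geq\alpha_{2}$, which are then fed into~\eqref{ide1.1} and~\eqref{ide2.1} just as in the paper. The difference is in how those two inequalities are obtained. The paper works entirely in the SVD coordinates of~\eqref{SVD-A}--\eqref{MP-B}: it computes the block forms of $B^{\dagger}EA^{\dagger}$, $A^{\dagger}EB^{\dagger}$, $B\widetilde{E}A$, $A\widetilde{E}B$ (equations~\eqref{B+EA+}, \eqref{A+EB+}, \eqref{BE+A}, \eqref{AE+B}) and compares them with the block forms~\eqref{rela1.2} and~\eqref{rela2.2} of $BB^{\dagger}EA^{\dagger}$ and $AA^{\dagger}EB^{\dagger}$, extracting the factors $\widetilde{\Sigma}_{1}$, $\Sigma_{1}^{-1}$, etc.\ to get~\eqref{BB+EA+1}--\eqref{AA+EB+2}. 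You instead prove the coordinate-free telescoping identities $B\widetilde{E}AA^{\dagger}=-BB^{\dagger}EA^{\dagger}$ and $A\widetilde{E}BB^{\dagger}=-AA^{\dagger}EB^{\dagger}$ from the Moore--Penrose relations $MM^{\dagger}M=M$ and $M^{\dagger}MM^{\dagger}=M^{\dagger}$, and then use $\|XY\|_{F}\leq\|X\|_{2}\|Y\|_{F}$ (resp.\ $\|XY\|_{F}\leq\|X\|_{F}\|Y\|_{2}$) together with $B^{\dagger}EA^{\dagger}=B^{\dagger}(BB^{\dagger}EA^{\dagger})$ and $B\widetilde{E}A=(B\widetilde{E}AA^{\dagger})A$; all of these identities check out. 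Your route is shorter and self-contained for this theorem, since it avoids any block computation; what the paper's SVD route buys is that the same block formulas are reused verbatim in the subsequent results (Theorem~\ref{up-thm2}, Corollary~\ref{up-cor}, Theorems~\ref{low-thm1}, \ref{low-thm2}, and the combined bounds), whereas your identities would cover only the terms appearing here.
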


\begin{proof}
Using~\eqref{SVD-A}, \eqref{SVD-B}, \eqref{MP-A}, and~\eqref{MP-B}, we obtain
\begin{align*}
&\widetilde{V}^{\ast}B^{\dagger}EA^{\dagger}U=\begin{pmatrix}
\widetilde{V}_{1}^{\ast}V_{1}\Sigma_{1}^{-1}-\widetilde{\Sigma}_{1}^{-1}\widetilde{U}_{1}^{\ast}U_{1} & 0 \\
0 & 0
\end{pmatrix},\\
&V^{\ast}A^{\dagger}EB^{\dagger}\widetilde{U}=\begin{pmatrix}
\Sigma_{1}^{-1}U_{1}^{\ast}\widetilde{U}_{1}-V_{1}^{\ast}\widetilde{V}_{1}\widetilde{\Sigma}_{1}^{-1} & 0 \\
0 & 0
\end{pmatrix}.
\end{align*}
Thus,
\begin{align}
&\|B^{\dagger}EA^{\dagger}\|_{F}^{2}=\|\widetilde{V}_{1}^{\ast}V_{1}\Sigma_{1}^{-1}-\widetilde{\Sigma}_{1}^{-1}\widetilde{U}_{1}^{\ast}U_{1}\|_{F}^{2},\label{B+EA+}\\
&\|A^{\dagger}EB^{\dagger}\|_{F}^{2}=\|\Sigma_{1}^{-1}U_{1}^{\ast}\widetilde{U}_{1}-V_{1}^{\ast}\widetilde{V}_{1}\widetilde{\Sigma}_{1}^{-1}\|_{F}^{2}.\label{A+EB+}
\end{align}
According to~\eqref{rela1.2}, \eqref{rela2.2}, \eqref{B+EA+}, and~\eqref{A+EB+}, we deduce that
\begin{align}
&\|BB^{\dagger}EA^{\dagger}\|_{F}^{2}=\|\widetilde{\Sigma}_{1}(\widetilde{V}_{1}^{\ast}V_{1}\Sigma_{1}^{-1}-\widetilde{\Sigma}_{1}^{-1}\widetilde{U}_{1}^{\ast}U_{1})\|_{F}^{2}\geq\frac{\|B^{\dagger}EA^{\dagger}\|_{F}^{2}}{\|B^{\dagger}\|_{2}^{2}},\label{BB+EA+1}\\
&\|AA^{\dagger}EB^{\dagger}\|_{F}^{2}=\|\Sigma_{1}(\Sigma_{1}^{-1}U_{1}^{\ast}\widetilde{U}_{1}-V_{1}^{\ast}\widetilde{V}_{1}\widetilde{\Sigma}_{1}^{-1})\|_{F}^{2}\geq\frac{\|A^{\dagger}EB^{\dagger}\|_{F}^{2}}{\|A^{\dagger}\|_{2}^{2}}.\label{AA+EB+1}
\end{align}
Similarly, we have
\begin{align*}
&\widetilde{U}^{\ast}B\widetilde{E}AV=\begin{pmatrix}
\widetilde{U}_{1}^{\ast}U_{1}\Sigma_{1}-\widetilde{\Sigma}_{1}\widetilde{V}_{1}^{\ast}V_{1} & 0 \\
0 & 0
\end{pmatrix},\\
&U^{\ast}A\widetilde{E}B\widetilde{V}=\begin{pmatrix}
\Sigma_{1}V_{1}^{\ast}\widetilde{V}_{1}-U_{1}^{\ast}\widetilde{U}_{1}\widetilde{\Sigma}_{1} & 0 \\
0 & 0
\end{pmatrix}.
\end{align*}
Hence,
\begin{align}
&\|B\widetilde{E}A\|_{F}^{2}=\|\widetilde{\Sigma}_{1}\widetilde{V}_{1}^{\ast}V_{1}-\widetilde{U}_{1}^{\ast}U_{1}\Sigma_{1}\|_{F}^{2},\label{BE+A}\\
&\|A\widetilde{E}B\|_{F}^{2}=\|U_{1}^{\ast}\widetilde{U}_{1}\widetilde{\Sigma}_{1}-\Sigma_{1}V_{1}^{\ast}\widetilde{V}_{1}\|_{F}^{2}.\label{AE+B}
\end{align}
From~\eqref{rela1.2}, \eqref{rela2.2}, \eqref{BE+A}, and~\eqref{AE+B}, we deduce that
\begin{align}
&\|BB^{\dagger}EA^{\dagger}\|_{F}^{2}=\|(\widetilde{\Sigma}_{1}\widetilde{V}_{1}^{\ast}V_{1}-\widetilde{U}_{1}^{\ast}U_{1}\Sigma_{1})\Sigma_{1}^{-1}\|_{F}^{2}\geq\frac{\|B\widetilde{E}A\|_{F}^{2}}{\|A\|_{2}^{2}},\label{BB+EA+2}\\
&\|AA^{\dagger}EB^{\dagger}\|_{F}^{2}=\|(U_{1}^{\ast}\widetilde{U}_{1}\widetilde{\Sigma}_{1}-\Sigma_{1}V_{1}^{\ast}\widetilde{V}_{1})\widetilde{\Sigma}_{1}^{-1}\|_{F}^{2}\geq\frac{\|A\widetilde{E}B\|_{F}^{2}}{\|B\|_{2}^{2}}.\label{AA+EB+2}
\end{align}
Based on~\eqref{BB+EA+1}, \eqref{AA+EB+1}, \eqref{BB+EA+2}, and~\eqref{AA+EB+2}, we arrive at
\begin{align}
&\|BB^{\dagger}EA^{\dagger}\|_{F}^{2}\geq\max\bigg\{\frac{\|B^{\dagger}EA^{\dagger}\|_{F}^{2}}{\|B^{\dagger}\|_{2}^{2}},\frac{\|B\widetilde{E}A\|_{F}^{2}}{\|A\|_{2}^{2}}\bigg\},\label{BB+EA+}\\
&\|AA^{\dagger}EB^{\dagger}\|_{F}^{2}\geq\max\bigg\{\frac{\|A^{\dagger}EB^{\dagger}\|_{F}^{2}}{\|A^{\dagger}\|_{2}^{2}},\frac{\|A\widetilde{E}B\|_{F}^{2}}{\|B\|_{2}^{2}}\bigg\}.\label{AA+EB+}
\end{align}

The inequality~\eqref{up1.1} then follows by combining~\eqref{ide1.1}, \eqref{BB+EA+}, and~\eqref{AA+EB+}. In particular, if $s=r$, using~\eqref{ide2.1}, \eqref{BB+EA+}, and~\eqref{AA+EB+}, we can obtain the inequality~\eqref{up1.2}.
\end{proof}

Based on~\eqref{exp1.1} and~\eqref{exp2.1}, we can derive the following theorem.

\begin{theorem}\label{up-thm2}
Let $A\in\mathbb{C}^{m\times n}_{r}$, $B\in\mathbb{C}^{m\times n}_{s}$, $E=B-A$, and $\widetilde{E}=B^{\dagger}-A^{\dagger}$. Define
\begin{align*}
&\beta_{1}:=\min\Big\{\|A^{\dagger}\|_{2}^{2}\big(\|E\|_{F}^{2}-\|BB^{\dagger}E\|_{F}^{2}\big),\|A\|_{2}^{2}\big(\|\widetilde{E}\|_{F}^{2}-\|\widetilde{E}BB^{\dagger}\|_{F}^{2}\big)\Big\},\\
&\beta_{2}:=\min\Big\{\|B^{\dagger}\|_{2}^{2}\big(\|E\|_{F}^{2}-\|AA^{\dagger}E\|_{F}^{2}\big),\|B\|_{2}^{2}\big(\|\widetilde{E}\|_{F}^{2}-\|\widetilde{E}AA^{\dagger}\|_{F}^{2}\big)\Big\}.
\end{align*}
Then
\begin{equation}\label{up2.1}
\|P_{B}-P_{A}\|_{F}^{2}\leq\beta_{1}+\beta_{2}.
\end{equation}
In particular, if $s=r$, then
\begin{equation}\label{up2.2}
\|P_{B}-P_{A}\|_{F}^{2}\leq 2\min\big\{\beta_{1},\beta_{2}\big\}.
\end{equation}
\end{theorem}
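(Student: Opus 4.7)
The plan is to control the two summands in the identity $\|P_{B}-P_{A}\|_{F}^{2}=\|\widetilde{U}_{1}^{\ast}U_{2}\|_{F}^{2}+\|\widetilde{U}_{2}^{\ast}U_{1}\|_{F}^{2}$ from~\eqref{exp1.1} separately, showing that $\|\widetilde{U}_{2}^{\ast}U_{1}\|_{F}^{2}\leq\beta_{1}$ and $\|\widetilde{U}_{1}^{\ast}U_{2}\|_{F}^{2}\leq\beta_{2}$. Summing the two bounds then yields~\eqref{up2.1}; when $s=r$, the equal-rank identity~\eqref{exp2.1} doubles either bound and produces~\eqref{up2.2}.

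To obtain $\|\widetilde{U}_{2}^{\ast}U_{1}\|_{F}^{2}\leq\beta_{1}$, I would compute the block forms of $\widetilde{U}^{\ast}EV$ and $\widetilde{U}^{\ast}BB^{\dagger}EV$ from the SVDs~\eqref{SVD-A} and~\eqref{SVD-B}, in the same spirit as the calculations already used in the proofs of Lemma~\ref{identity} and Theorem~\ref{up-thm1}. Since $\widetilde{U}^{\ast}BB^{\dagger}\widetilde{U}=\diag(I_{s},0)$, passing from $\widetilde{U}^{\ast}EV$ to $\widetilde{U}^{\ast}BB^{\dagger}EV$ merely zeros out the lower block row, so comparing Frobenius norms yields the identity $\|E\|_{F}^{2}-\|BB^{\dagger}E\|_{F}^{2}=\|\widetilde{U}_{2}^{\ast}U_{1}\Sigma_{1}\|_{F}^{2}$. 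The elementary estimate $\|X\Sigma_{1}\|_{F}^{2}\geq\sigma_{r}^{2}\|X\|_{F}^{2}=\|X\|_{F}^{2}/\|A^{\dagger}\|_{2}^{2}$ then gives one half of $\beta_{1}$. Carrying out the analogous calculation with $V^{\ast}\widetilde{E}\widetilde{U}$ and $V^{\ast}\widetilde{E}BB^{\dagger}\widetilde{U}$, now multiplying on the right by $\widetilde{U}^{\ast}BB^{\dagger}\widetilde{U}=\diag(I_{s},0)$, produces $\|\widetilde{E}\|_{F}^{2}-\|\widetilde{E}BB^{\dagger}\|_{F}^{2}=\|\Sigma_{1}^{-1}U_{1}^{\ast}\widetilde{U}_{2}\|_{F}^{2}$; the inequality $\|\Sigma_{1}^{-1}Y\|_{F}^{2}\geq\|Y\|_{F}^{2}/\|A\|_{2}^{2}$, together with $\|U_{1}^{\ast}\widetilde{U}_{2}\|_{F}=\|\widetilde{U}_{2}^{\ast}U_{1}\|_{F}$, supplies the second half. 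Taking the minimum yields $\|\widetilde{U}_{2}^{\ast}U_{1}\|_{F}^{2}\leq\beta_{1}$.

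The bound $\|\widetilde{U}_{1}^{\ast}U_{2}\|_{F}^{2}\leq\beta_{2}$ follows from the symmetric argument in which $AA^{\dagger}=U_{1}U_{1}^{\ast}$ plays the role previously played by $BB^{\dagger}$. Working with $U^{\ast}EV$ and $V^{\ast}\widetilde{E}U$ and multiplying by $U^{\ast}AA^{\dagger}U=\diag(I_{r},0)$ on the appropriate side yields the two identities $\|E\|_{F}^{2}-\|AA^{\dagger}E\|_{F}^{2}=\|U_{2}^{\ast}\widetilde{U}_{1}\widetilde{\Sigma}_{1}\|_{F}^{2}$ and $\|\widetilde{E}\|_{F}^{2}-\|\widetilde{E}AA^{\dagger}\|_{F}^{2}=\|\widetilde{\Sigma}_{1}^{-1}\widetilde{U}_{1}^{\ast}U_{2}\|_{F}^{2}$, and the same one-sided scaling inequalities, now involving $\widetilde{\sigma}_{s}=1/\|B^{\dagger}\|_{2}$ and $\widetilde{\sigma}_{1}=\|B\|_{2}$, deliver $\beta_{2}$.

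I expect the main obstacle to be bookkeeping: keeping track of four parallel block computations without sign or transpose errors. No conceptually new inequality is required; the only ingredients beyond~\eqref{exp1.1} and~\eqref{exp2.1} are the scaling inequalities $\|X\Sigma\|_{F}^{2}\geq\sigma_{\min}(\Sigma)^{2}\|X\|_{F}^{2}$ and $\|\Sigma^{-1}Y\|_{F}^{2}\geq\sigma_{\max}(\Sigma)^{-2}\|Y\|_{F}^{2}$, which already underlie the proof of Theorem~\ref{up-thm1}.
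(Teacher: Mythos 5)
Your proposal is correct and follows essentially the same route as the paper: bound $\|\widetilde{U}_{2}^{\ast}U_{1}\|_{F}^{2}\leq\beta_{1}$ and $\|\widetilde{U}_{1}^{\ast}U_{2}\|_{F}^{2}\leq\beta_{2}$ via the block SVD identities $\|E\|_{F}^{2}-\|BB^{\dagger}E\|_{F}^{2}=\|\widetilde{U}_{2}^{\ast}U_{1}\Sigma_{1}\|_{F}^{2}$, $\|\widetilde{E}\|_{F}^{2}-\|\widetilde{E}BB^{\dagger}\|_{F}^{2}=\|\widetilde{U}_{2}^{\ast}U_{1}\Sigma_{1}^{-1}\|_{F}^{2}$ and their $AA^{\dagger}$ counterparts, then apply the extreme-singular-value scaling inequalities and conclude with~\eqref{exp1.1} and~\eqref{exp2.1}. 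The only (immaterial) difference is your choice of which unitary multiplies on the unprojected side in some of the block computations, which does not affect the Frobenius-norm identities.
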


\begin{proof}
By~\eqref{SVD-A}, \eqref{SVD-B}, \eqref{MP-A}, and~\eqref{MP-B}, we have
\begin{align}
&U^{\ast}E\widetilde{V}=\begin{pmatrix}
U_{1}^{\ast}\widetilde{U}_{1}\widetilde{\Sigma}_{1}-\Sigma_{1}V_{1}^{\ast}\widetilde{V}_{1} & -\Sigma_{1}V_{1}^{\ast}\widetilde{V}_{2}\\
U_{2}^{\ast}\widetilde{U}_{1}\widetilde{\Sigma}_{1} & 0
\end{pmatrix},\label{E1}\\
&U^{\ast}AA^{\dagger}E\widetilde{V}=\begin{pmatrix}
U_{1}^{\ast}\widetilde{U}_{1}\widetilde{\Sigma}_{1}-\Sigma_{1}V_{1}^{\ast}\widetilde{V}_{1} & -\Sigma_{1}V_{1}^{\ast}\widetilde{V}_{2}\\
0 & 0
\end{pmatrix}.\label{AA+E}
\end{align}
From~\eqref{E1} and~\eqref{AA+E}, we deduce that
\begin{displaymath}
\|\widetilde{\Sigma}_{1}\widetilde{U}_{1}^{\ast}U_{2}\|_{F}^{2}=\|E\|_{F}^{2}-\|AA^{\dagger}E\|_{F}^{2}.
\end{displaymath}
Due to
\begin{displaymath}
\|\widetilde{U}_{1}^{\ast}U_{2}\|_{F}^{2}\leq\|B^{\dagger}\|_{2}^{2}\|\widetilde{\Sigma}_{1}\widetilde{U}_{1}^{\ast}U_{2}\|_{F}^{2},
\end{displaymath}
it follows that
\begin{displaymath}
\|\widetilde{U}_{1}^{\ast}U_{2}\|_{F}^{2}\leq\|B^{\dagger}\|_{2}^{2}\big(\|E\|_{F}^{2}-\|AA^{\dagger}E\|_{F}^{2}\big).
\end{displaymath}
In addition, we have
\begin{align}
&\widetilde{V}^{\ast}\widetilde{E}U=\begin{pmatrix}
\widetilde{\Sigma}_{1}^{-1}\widetilde{U}_{1}^{\ast}U_{1}-\widetilde{V}_{1}^{\ast}V_{1}\Sigma_{1}^{-1} & \widetilde{\Sigma}_{1}^{-1}\widetilde{U}_{1}^{\ast}U_{2} \\
-\widetilde{V}_{2}^{\ast}V_{1}\Sigma_{1}^{-1} & 0
\end{pmatrix},\label{E+1}\\
&\widetilde{V}^{\ast}\widetilde{E}AA^{\dagger}U=\begin{pmatrix}
\widetilde{\Sigma}_{1}^{-1}\widetilde{U}_{1}^{\ast}U_{1}-\widetilde{V}_{1}^{\ast}V_{1}\Sigma_{1}^{-1} & 0 \\
-\widetilde{V}_{2}^{\ast}V_{1}\Sigma_{1}^{-1} & 0
\end{pmatrix}.\label{E+AA+}
\end{align}
By~\eqref{E+1} and~\eqref{E+AA+}, we have
\begin{displaymath}
\|\widetilde{\Sigma}_{1}^{-1}\widetilde{U}_{1}^{\ast}U_{2}\|_{F}^{2}=\|\widetilde{E}\|_{F}^{2}-\|\widetilde{E}AA^{\dagger}\|_{F}^{2}.
\end{displaymath}
Since
\begin{displaymath}
\|\widetilde{U}_{1}^{\ast}U_{2}\|_{F}^{2}\leq\|B\|_{2}^{2}\|\widetilde{\Sigma}_{1}^{-1}\widetilde{U}_{1}^{\ast}U_{2}\|_{F}^{2},
\end{displaymath}
it follows that
\begin{displaymath}
\|\widetilde{U}_{1}^{\ast}U_{2}\|_{F}^{2}\leq\|B\|_{2}^{2}\big(\|\widetilde{E}\|_{F}^{2}-\|\widetilde{E}AA^{\dagger}\|_{F}^{2}\big).
\end{displaymath}
Thus,
\begin{equation}\label{U1starU2-1}
\|\widetilde{U}_{1}^{\ast}U_{2}\|_{F}^{2}\leq\min\Big\{\|B^{\dagger}\|_{2}^{2}\big(\|E\|_{F}^{2}-\|AA^{\dagger}E\|_{F}^{2}\big),\|B\|_{2}^{2}\big(\|\widetilde{E}\|_{F}^{2}-\|\widetilde{E}AA^{\dagger}\|_{F}^{2}\big)\Big\}.
\end{equation}

Similarly,
\begin{align}
&\widetilde{U}^{\ast}EV=\begin{pmatrix}
\widetilde{\Sigma}_{1}\widetilde{V}_{1}^{\ast}V_{1}-\widetilde{U}_{1}^{\ast}U_{1}\Sigma_{1} & \widetilde{\Sigma}_{1}\widetilde{V}_{1}^{\ast}V_{2}\\
-\widetilde{U}_{2}^{\ast}U_{1}\Sigma_{1} & 0
\end{pmatrix},\label{E2}\\
&\widetilde{U}^{\ast}BB^{\dagger}EV=\begin{pmatrix}
\widetilde{\Sigma}_{1}\widetilde{V}_{1}^{\ast}V_{1}-\widetilde{U}_{1}^{\ast}U_{1}\Sigma_{1} & \widetilde{\Sigma}_{1}\widetilde{V}_{1}^{\ast}V_{2}\\
0 & 0
\end{pmatrix},\label{BB+E}\\
&V^{\ast}\widetilde{E}\widetilde{U}=\begin{pmatrix}
V_{1}^{\ast}\widetilde{V}_{1}\widetilde{\Sigma}_{1}^{-1}-\Sigma_{1}^{-1}U_{1}^{\ast}\widetilde{U}_{1} & -\Sigma_{1}^{-1}U_{1}^{\ast}\widetilde{U}_{2} \\
V_{2}^{\ast}\widetilde{V}_{1}\widetilde{\Sigma}_{1}^{-1} & 0
\end{pmatrix},\label{E+2}\\
&V^{\ast}\widetilde{E}BB^{\dagger}\widetilde{U}=\begin{pmatrix}
V_{1}^{\ast}\widetilde{V}_{1}\widetilde{\Sigma}_{1}^{-1}-\Sigma_{1}^{-1}U_{1}^{\ast}\widetilde{U}_{1} & 0 \\
V_{2}^{\ast}\widetilde{V}_{1}\widetilde{\Sigma}_{1}^{-1} & 0
\end{pmatrix}.\label{E+BB+}
\end{align}
Using~\eqref{E2} and~\eqref{BB+E}, we obtain
\begin{displaymath}
\|\widetilde{U}_{2}^{\ast}U_{1}\|_{F}^{2}\leq\|A^{\dagger}\|_{2}^{2}\|\widetilde{U}_{2}^{\ast}U_{1}\Sigma_{1}\|_{F}^{2}=\|A^{\dagger}\|_{2}^{2}\big(\|E\|_{F}^{2}-\|BB^{\dagger}E\|_{F}^{2}\big).
\end{displaymath}
In light of~\eqref{E+2} and~\eqref{E+BB+}, we have
\begin{displaymath}
\|\widetilde{U}_{2}^{\ast}U_{1}\|_{F}^{2}\leq\|A\|_{2}^{2}\|\widetilde{U}_{2}^{\ast}U_{1}\Sigma_{1}^{-1}\|_{F}^{2}=\|A\|_{2}^{2}\big(\|\widetilde{E}\|_{F}^{2}-\|\widetilde{E}BB^{\dagger}\|_{F}^{2}\big).
\end{displaymath}
Hence,
\begin{equation}\label{U2starU1-1}
\|\widetilde{U}_{2}^{\ast}U_{1}\|_{F}^{2}\leq\min\Big\{\|A^{\dagger}\|_{2}^{2}\big(\|E\|_{F}^{2}-\|BB^{\dagger}E\|_{F}^{2}\big),\|A\|_{2}^{2}\big(\|\widetilde{E}\|_{F}^{2}-\|\widetilde{E}BB^{\dagger}\|_{F}^{2}\big)\Big\}.
\end{equation}

In view of~\eqref{exp1.1}, \eqref{U1starU2-1}, and~\eqref{U2starU1-1}, we conclude that the inequality~\eqref{up2.1} holds. In particular, if $s=r$, using~\eqref{exp2.1}, \eqref{U1starU2-1}, and~\eqref{U2starU1-1}, we can get the inequality~\eqref{up2.2}.
\end{proof}

\begin{remark}\rm
By~\eqref{AA+E}, we have
\begin{displaymath}
\|AA^{\dagger}E\|_{F}^{2}=\|\Sigma_{1}(\Sigma_{1}^{-1}U_{1}^{\ast}\widetilde{U}_{1}\widetilde{\Sigma}_{1}-V_{1}^{\ast}\widetilde{V}_{1})\|_{F}^{2}+\|\Sigma_{1}V_{1}^{\ast}\widetilde{V}_{2}\|_{F}^{2}\geq\frac{\|A^{\dagger}E\|_{F}^{2}}{\|A^{\dagger}\|_{2}^{2}},
\end{displaymath}
where we have used the fact that
\begin{displaymath}
\|A^{\dagger}E\|_{F}^{2}=\|\Sigma_{1}^{-1}U_{1}^{\ast}\widetilde{U}_{1}\widetilde{\Sigma}_{1}-V_{1}^{\ast}\widetilde{V}_{1}\|_{F}^{2}+\|V_{1}^{\ast}\widetilde{V}_{2}\|_{F}^{2}.
\end{displaymath}
Analogously, it holds that
\begin{displaymath}
\|BB^{\dagger}E\|_{F}^{2}\geq\frac{\|B^{\dagger}E\|_{F}^{2}}{\|B^{\dagger}\|_{2}^{2}}.
\end{displaymath}
Then
\begin{align*}
\beta_{1}&\leq\|A^{\dagger}\|_{2}^{2}\bigg(\|E\|_{F}^{2}-\frac{\|B^{\dagger}E\|_{F}^{2}}{\|B^{\dagger}\|_{2}^{2}}\bigg),\\ \beta_{2}&\leq\|B^{\dagger}\|_{2}^{2}\bigg(\|E\|_{F}^{2}-\frac{\|A^{\dagger}E\|_{F}^{2}}{\|A^{\dagger}\|_{2}^{2}}\bigg).
\end{align*}
Therefore, the estimates~\eqref{up2.1} and~\eqref{up2.2} are sharper than~\eqref{Li1} and~\eqref{Li2}, respectively.
\end{remark}

The following corollary provides an alternative version of Theorem~\ref{up-thm2}.

\begin{corollary}\label{up-cor}
Let $A\in\mathbb{C}^{m\times n}_{r}$, $B\in\mathbb{C}^{m\times n}_{s}$, $E=B-A$, and $\widetilde{E}=B^{\dagger}-A^{\dagger}$. Define
\begin{align*}
&\gamma_{1}:=\min\Big\{\|A^{\dagger}\|_{2}^{2}\big(\|EA^{\dagger}A\|_{F}^{2}-\|B\widetilde{E}A\|_{F}^{2}\big),\|A\|_{2}^{2}\big(\|A^{\dagger}A\widetilde{E}\|_{F}^{2}-\|A^{\dagger}EB^{\dagger}\|_{F}^{2}\big)\Big\},\\
&\gamma_{2}:=\min\Big\{\|B^{\dagger}\|_{2}^{2}\big(\|EB^{\dagger}B\|_{F}^{2}-\|A\widetilde{E}B\|_{F}^{2}\big),\|B\|_{2}^{2}\big(\|B^{\dagger}B\widetilde{E}\|_{F}^{2}-\|B^{\dagger}EA^{\dagger}\|_{F}^{2}\big)\Big\}.
\end{align*}
Then
\begin{equation}\label{up3.1}
\|P_{B}-P_{A}\|_{F}^{2}\leq\gamma_{1}+\gamma_{2}.
\end{equation}
In particular, if $s=r$, then
\begin{equation}\label{up3.2}
\|P_{B}-P_{A}\|_{F}^{2}\leq 2\min\big\{\gamma_{1},\gamma_{2}\big\}.
\end{equation}
\end{corollary}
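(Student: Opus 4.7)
The plan is to prove the two inequalities $\|\widetilde{U}_{2}^{\ast}U_{1}\|_{F}^{2}\leq\gamma_{1}$ and $\|\widetilde{U}_{1}^{\ast}U_{2}\|_{F}^{2}\leq\gamma_{2}$, after which~\eqref{up3.1} follows immediately from~\eqref{exp1.1} and, when $s=r$,~\eqref{up3.2} follows from~\eqref{exp2.1}. The argument parallels the proof of Theorem~\ref{up-thm2}, but rather than controlling the off-diagonal block $\widetilde{U}_{2}^{\ast}U_{1}\Sigma_{1}$ through $\|E\|_{F}^{2}-\|BB^{\dagger}E\|_{F}^{2}$, I would control it via the alternative difference $\|EA^{\dagger}A\|_{F}^{2}-\|B\widetilde{E}A\|_{F}^{2}$, and dually through $\|A^{\dagger}A\widetilde{E}\|_{F}^{2}-\|A^{\dagger}EB^{\dagger}\|_{F}^{2}$.

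The first key step is to right-multiply the formula for $\widetilde{U}^{\ast}EV$ in~\eqref{E2} by $V^{\ast}A^{\dagger}AV=\diag(I_{r},0)$: this annihilates the second block of columns while leaving the first block untouched, so that $\|EA^{\dagger}A\|_{F}^{2}=\|\widetilde{\Sigma}_{1}\widetilde{V}_{1}^{\ast}V_{1}-\widetilde{U}_{1}^{\ast}U_{1}\Sigma_{1}\|_{F}^{2}+\|\widetilde{U}_{2}^{\ast}U_{1}\Sigma_{1}\|_{F}^{2}$. Subtracting the identity~\eqref{BE+A} gives the clean expression $\|EA^{\dagger}A\|_{F}^{2}-\|B\widetilde{E}A\|_{F}^{2}=\|\widetilde{U}_{2}^{\ast}U_{1}\Sigma_{1}\|_{F}^{2}$, whence the submultiplicative bound $\|\widetilde{U}_{2}^{\ast}U_{1}\|_{F}^{2}\leq\|\Sigma_{1}^{-1}\|_{2}^{2}\|\widetilde{U}_{2}^{\ast}U_{1}\Sigma_{1}\|_{F}^{2}=\|A^{\dagger}\|_{2}^{2}(\|EA^{\dagger}A\|_{F}^{2}-\|B\widetilde{E}A\|_{F}^{2})$ delivers the first half of $\gamma_{1}$. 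The second half is obtained dually: left-multiplying the formula for $V^{\ast}\widetilde{E}\widetilde{U}$ in~\eqref{E+2} by the same projector $\diag(I_{r},0)$ zeros out the second block of rows, and comparison with~\eqref{A+EB+} yields $\|A^{\dagger}A\widetilde{E}\|_{F}^{2}-\|A^{\dagger}EB^{\dagger}\|_{F}^{2}=\|\widetilde{U}_{2}^{\ast}U_{1}\Sigma_{1}^{-1}\|_{F}^{2}$; multiplication by $\|\Sigma_{1}\|_{2}^{2}=\|A\|_{2}^{2}$ then completes the proof that $\|\widetilde{U}_{2}^{\ast}U_{1}\|_{F}^{2}\leq\gamma_{1}$.

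The companion bound $\|\widetilde{U}_{1}^{\ast}U_{2}\|_{F}^{2}\leq\gamma_{2}$ follows by symmetry: interchanging the roles of $A$ and $B$ (so that one starts from~\eqref{E1} and~\eqref{E+1}, processes by $B^{\dagger}B$ in place of $A^{\dagger}A$, and invokes the identities~\eqref{AE+B} and~\eqref{B+EA+}) produces the two corresponding inequalities. Substituting both block bounds into~\eqref{exp1.1} yields~\eqref{up3.1}, while the case $s=r$ combined with~\eqref{exp2.1} yields~\eqref{up3.2}. The main obstacle here is purely bookkeeping in the SVD block expansions; once one notices that multiplying by $A^{\dagger}A$ (respectively $B^{\dagger}B$) simply zeros out the complementary block of columns or rows in the appropriate $V$- or $\widetilde{V}$-coordinate representation, every remaining estimate reduces to a one-line application of $\|XY\|_{F}\leq\|Y\|_{2}\|X\|_{F}$.
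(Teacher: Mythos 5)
Your proposal is correct and follows essentially the same route as the paper: you derive the block expressions for $EA^{\dagger}A$, $A^{\dagger}A\widetilde{E}$, $EB^{\dagger}B$, and $B^{\dagger}B\widetilde{E}$ (your projector-multiplication viewpoint is just the paper's direct SVD computation of~\eqref{EA+A}, \eqref{A+AE}, \eqref{EB+B}, and~\eqref{B+BE}), compare them with~\eqref{BE+A}, \eqref{A+EB+}, \eqref{AE+B}, and~\eqref{B+EA+} to isolate $\|\widetilde{U}_{2}^{\ast}U_{1}\Sigma_{1}\|_{F}^{2}$, $\|\widetilde{U}_{2}^{\ast}U_{1}\Sigma_{1}^{-1}\|_{F}^{2}$, and their counterparts, and then conclude via $\|XY\|_{F}\leq\|X\|_{F}\|Y\|_{2}$ together with~\eqref{exp1.1} and~\eqref{exp2.1}, exactly as in the paper's proof of Corollary~\ref{up-cor}.
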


\begin{proof}
By~\eqref{SVD-A}, \eqref{SVD-B}, \eqref{MP-A}, and~\eqref{MP-B}, we have
\begin{align}
&U^{\ast}EB^{\dagger}B\widetilde{V}=\begin{pmatrix}
U_{1}^{\ast}\widetilde{U}_{1}\widetilde{\Sigma}_{1}-\Sigma_{1}V_{1}^{\ast}\widetilde{V}_{1} & 0 \\
U_{2}^{\ast}\widetilde{U}_{1}\widetilde{\Sigma}_{1} & 0
\end{pmatrix},\label{EB+B}\\
&\widetilde{V}^{\ast}B^{\dagger}B\widetilde{E}U=\begin{pmatrix}
\widetilde{\Sigma}_{1}^{-1}\widetilde{U}_{1}^{\ast}U_{1}-\widetilde{V}_{1}^{\ast}V_{1}\Sigma_{1}^{-1} & \widetilde{\Sigma}_{1}^{-1}\widetilde{U}_{1}^{\ast}U_{2}\\
0 & 0
\end{pmatrix}.\label{B+BE}
\end{align}
According to~\eqref{AE+B} and~\eqref{EB+B}, we deduce that
\begin{displaymath}
\|\widetilde{U}_{1}^{\ast}U_{2}\|_{F}^{2}\leq\|B^{\dagger}\|_{2}^{2}\|\widetilde{\Sigma}_{1}\widetilde{U}_{1}^{\ast}U_{2}\|_{F}^{2}=\|B^{\dagger}\|_{2}^{2}\big(\|EB^{\dagger}B\|_{F}^{2}-\|A\widetilde{E}B\|_{F}^{2}\big).
\end{displaymath}
On the other hand, we get from~\eqref{B+EA+} and~\eqref{B+BE} that
\begin{displaymath}
\|\widetilde{U}_{1}^{\ast}U_{2}\|_{F}^{2}\leq\|B\|_{2}^{2}\|\widetilde{\Sigma}_{1}^{-1}\widetilde{U}_{1}^{\ast}U_{2}\|_{F}^{2}=\|B\|_{2}^{2}\big(\|B^{\dagger}B\widetilde{E}\|_{F}^{2}-\|B^{\dagger}EA^{\dagger}\|_{F}^{2}\big).
\end{displaymath}
Hence,
\begin{equation}\label{U1starU2-2}
\|\widetilde{U}_{1}^{\ast}U_{2}\|_{F}^{2}\leq\min\Big\{\|B^{\dagger}\|_{2}^{2}\big(\|EB^{\dagger}B\|_{F}^{2}-\|A\widetilde{E}B\|_{F}^{2}\big),\|B\|_{2}^{2}\big(\|B^{\dagger}B\widetilde{E}\|_{F}^{2}-\|B^{\dagger}EA^{\dagger}\|_{F}^{2}\big)\Big\}.
\end{equation}

Similarly, we have
\begin{align}
&\widetilde{U}^{\ast}EA^{\dagger}AV=\begin{pmatrix}
\widetilde{\Sigma}_{1}\widetilde{V}_{1}^{\ast}V_{1}-\widetilde{U}_{1}^{\ast}U_{1}\Sigma_{1} & 0 \\
-\widetilde{U}_{2}^{\ast}U_{1}\Sigma_{1} & 0
\end{pmatrix},\label{EA+A}\\
&V^{\ast}A^{\dagger}A\widetilde{E}\widetilde{U}=\begin{pmatrix}
V_{1}^{\ast}\widetilde{V}_{1}\widetilde{\Sigma}_{1}^{-1}-\Sigma_{1}^{-1}U_{1}^{\ast}\widetilde{U}_{1} & -\Sigma_{1}^{-1}U_{1}^{\ast}\widetilde{U}_{2} \\
0 & 0
\end{pmatrix}.\label{A+AE}
\end{align}
Using~\eqref{BE+A} and~\eqref{EA+A}, we obtain
\begin{displaymath}
\|\widetilde{U}_{2}^{\ast}U_{1}\|_{F}^{2}\leq\|A^{\dagger}\|_{2}^{2}\|\widetilde{U}_{2}^{\ast}U_{1}\Sigma_{1}\|_{F}^{2}=\|A^{\dagger}\|_{2}^{2}\big(\|EA^{\dagger}A\|_{F}^{2}-\|B\widetilde{E}A\|_{F}^{2}\big).
\end{displaymath}
In view of~\eqref{A+EB+} and~\eqref{A+AE}, we have
\begin{displaymath}
\|\widetilde{U}_{2}^{\ast}U_{1}\|_{F}^{2}\leq\|A\|_{2}^{2}\|\widetilde{U}_{2}^{\ast}U_{1}\Sigma_{1}^{-1}\|_{F}^{2}=\|A\|_{2}^{2}\big(\|A^{\dagger}A\widetilde{E}\|_{F}^{2}-\|A^{\dagger}EB^{\dagger}\|_{F}^{2}\big).
\end{displaymath}
Thus,
\begin{equation}\label{U2starU1-2}
\|\widetilde{U}_{2}^{\ast}U_{1}\|_{F}^{2}\leq\min\Big\{\|A^{\dagger}\|_{2}^{2}\big(\|EA^{\dagger}A\|_{F}^{2}-\|B\widetilde{E}A\|_{F}^{2}\big),\|A\|_{2}^{2}\big(\|A^{\dagger}A\widetilde{E}\|_{F}^{2}-\|A^{\dagger}EB^{\dagger}\|_{F}^{2}\big)\Big\}.
\end{equation}
The rest of the proof is similar to Theorem~\ref{up-thm2}.
\end{proof}

\subsection{Lower bounds}

\label{subsec:low}

As is well known, the $L^{2}$-orthogonal projection onto the column space of a matrix is not necessarily a continuous function of the entries of the matrix (see, e.g.,~\cite{Sun1984,Sun2001}). In this subsection, we attempt to establish some lower bounds for $\|P_{B}-P_{A}\|_{F}^{2}$.

The first theorem is based on the identities~\eqref{ide1.1} and~\eqref{ide2.1}.

\begin{theorem}\label{low-thm1}
Let $A\in\mathbb{C}^{m\times n}_{r}$, $B\in\mathbb{C}^{m\times n}_{s}$, $E=B-A$, and $\widetilde{E}=B^{\dagger}-A^{\dagger}$. Define
\begin{align*}
&\alpha'_{1}:=\min\big\{\|B\|_{2}^{2}\|B^{\dagger}EA^{\dagger}\|_{F}^{2},\|A^{\dagger}\|_{2}^{2}\|B\widetilde{E}A\|_{F}^{2}\big\},\\ &\alpha'_{2}:=\min\big\{\|A\|_{2}^{2}\|A^{\dagger}EB^{\dagger}\|_{F}^{2},\|B^{\dagger}\|_{2}^{2}\|A\widetilde{E}B\|_{F}^{2}\big\}.
\end{align*}
Then
\begin{equation}\label{low1.1}
\|P_{B}-P_{A}\|_{F}^{2}\geq\|EA^{\dagger}\|_{F}^{2}+\|EB^{\dagger}\|_{F}^{2}-\alpha'_{1}-\alpha'_{2}.
\end{equation}
In particular, if $s=r$, then
\begin{equation}\label{low1.2}
\|P_{B}-P_{A}\|_{F}^{2}\geq 2\max\big\{\|EA^{\dagger}\|_{F}^{2}-\alpha'_{1},\|EB^{\dagger}\|_{F}^{2}-\alpha'_{2}\big\}.
\end{equation}
\end{theorem}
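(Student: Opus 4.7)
The plan is to mirror the proof of Theorem~\ref{up-thm1}: the identity~\eqref{ide1.1} gives
\[
\|P_{B}-P_{A}\|_{F}^{2}=\|EA^{\dagger}\|_{F}^{2}+\|EB^{\dagger}\|_{F}^{2}-\|BB^{\dagger}EA^{\dagger}\|_{F}^{2}-\|AA^{\dagger}EB^{\dagger}\|_{F}^{2},
\]
so to obtain a \emph{lower} bound it suffices to produce sharp \emph{upper} bounds for the two subtracted terms $\|BB^{\dagger}EA^{\dagger}\|_{F}^{2}$ and $\|AA^{\dagger}EB^{\dagger}\|_{F}^{2}$. Whereas the proof of Theorem~\ref{up-thm1} used $\|MX\|_{F}\geq \|X\|_{F}/\|M^{-1}\|_{2}$ (in effect) to bound these below, I will use the companion inequality $\|MX\|_{F}\leq\|M\|_{2}\|X\|_{F}$ on the very same SVD factorizations recorded in that proof.

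First I would reuse the identity $\|BB^{\dagger}EA^{\dagger}\|_{F}^{2}=\|\widetilde\Sigma_{1}\widetilde V_{1}^{\ast}V_{1}\Sigma_{1}^{-1}-\widetilde U_{1}^{\ast}U_{1}\|_{F}^{2}$ from~\eqref{rela1.2}, which can be written either as $\|\widetilde\Sigma_{1}(\widetilde V_{1}^{\ast}V_{1}\Sigma_{1}^{-1}-\widetilde\Sigma_{1}^{-1}\widetilde U_{1}^{\ast}U_{1})\|_{F}^{2}$ or as $\|(\widetilde\Sigma_{1}\widetilde V_{1}^{\ast}V_{1}-\widetilde U_{1}^{\ast}U_{1}\Sigma_{1})\Sigma_{1}^{-1}\|_{F}^{2}$. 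Pulling out $\|\widetilde\Sigma_{1}\|_{2}=\|B\|_{2}$ in the first form, together with~\eqref{B+EA+}, gives
\[
\|BB^{\dagger}EA^{\dagger}\|_{F}^{2}\leq\|B\|_{2}^{2}\|B^{\dagger}EA^{\dagger}\|_{F}^{2},
\]
while pulling out $\|\Sigma_{1}^{-1}\|_{2}=\|A^{\dagger}\|_{2}$ in the second form, together with~\eqref{BE+A}, gives
\[
\|BB^{\dagger}EA^{\dagger}\|_{F}^{2}\leq\|A^{\dagger}\|_{2}^{2}\|B\widetilde{E}A\|_{F}^{2}.
\]
Taking the minimum yields exactly $\|BB^{\dagger}EA^{\dagger}\|_{F}^{2}\leq\alpha'_{1}$. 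An entirely symmetric argument, rewriting $\|AA^{\dagger}EB^{\dagger}\|_{F}^{2}$ from~\eqref{rela2.2} in the two analogous forms and applying the submultiplicative bound with $\|\Sigma_{1}\|_{2}=\|A\|_{2}$ and with $\|\widetilde\Sigma_{1}^{-1}\|_{2}=\|B^{\dagger}\|_{2}$ (invoking~\eqref{A+EB+} and~\eqref{AE+B}), produces $\|AA^{\dagger}EB^{\dagger}\|_{F}^{2}\leq\alpha'_{2}$.

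Substituting these two inequalities into~\eqref{ide1.1} immediately delivers~\eqref{low1.1}. For the equal-rank case I would instead plug them separately into the two different expressions in~\eqref{ide2.1}, obtaining $\|P_{B}-P_{A}\|_{F}^{2}\geq 2(\|EA^{\dagger}\|_{F}^{2}-\alpha'_{1})$ and $\|P_{B}-P_{A}\|_{F}^{2}\geq 2(\|EB^{\dagger}\|_{F}^{2}-\alpha'_{2})$, and then take the maximum to get~\eqref{low1.2}.

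There is no substantial obstacle: all SVD block computations have already been done in Theorem~\ref{up-thm1}, and the only genuinely new ingredient is the reversal of the inequality direction $\|MX\|_{F}\leq\|M\|_{2}\|X\|_{F}$ rather than the lower bound $\|MX\|_{F}\geq\|X\|_{F}/\|M^{-1}\|_{2}$. The mildly delicate point is to be careful about which factor ($\widetilde\Sigma_{1}$ vs $\Sigma_{1}^{-1}$, or $\Sigma_{1}$ vs $\widetilde\Sigma_{1}^{-1}$) is pulled out on each side so that the resulting spectral-norm constants correctly match $\|B\|_{2}$, $\|A^{\dagger}\|_{2}$, $\|A\|_{2}$, $\|B^{\dagger}\|_{2}$ in the definitions of $\alpha'_{1}$ and $\alpha'_{2}$; once that bookkeeping is respected, everything falls out.
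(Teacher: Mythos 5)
Your proposal is correct and coincides with the paper's own argument: the paper likewise upper-bounds $\|BB^{\dagger}EA^{\dagger}\|_{F}^{2}$ and $\|AA^{\dagger}EB^{\dagger}\|_{F}^{2}$ by $\alpha'_{1}$ and $\alpha'_{2}$ via the same two SVD factorizations from Theorem~\ref{up-thm1} with the submultiplicative bound $\|MX\|_{F}\leq\|M\|_{2}\|X\|_{F}$, and then substitutes into~\eqref{ide1.1} and the two forms of~\eqref{ide2.1}. The spectral-norm bookkeeping ($\|\widetilde{\Sigma}_{1}\|_{2}=\|B\|_{2}$, $\|\Sigma_{1}^{-1}\|_{2}=\|A^{\dagger}\|_{2}$, etc.) in your write-up matches the paper exactly.
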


\begin{proof}
According to the proof of Theorem~\ref{up-thm1}, we have
\begin{align*}
&\|BB^{\dagger}EA^{\dagger}\|_{F}^{2}=\|\widetilde{\Sigma}_{1}(\widetilde{V}_{1}^{\ast}V_{1}\Sigma_{1}^{-1}-\widetilde{\Sigma}_{1}^{-1}\widetilde{U}_{1}^{\ast}U_{1})\|_{F}^{2}\leq\|B\|_{2}^{2}\|B^{\dagger}EA^{\dagger}\|_{F}^{2},\\
&\|BB^{\dagger}EA^{\dagger}\|_{F}^{2}=\|(\widetilde{\Sigma}_{1}\widetilde{V}_{1}^{\ast}V_{1}-\widetilde{U}_{1}^{\ast}U_{1}\Sigma_{1})\Sigma_{1}^{-1}\|_{F}^{2}\leq\|A^{\dagger}\|_{2}^{2}\|B\widetilde{E}A\|_{F}^{2}.
\end{align*}
Hence,
\begin{displaymath}
\|BB^{\dagger}EA^{\dagger}\|_{F}^{2}\leq\min\big\{\|B\|_{2}^{2}\|B^{\dagger}EA^{\dagger}\|_{F}^{2},\|A^{\dagger}\|_{2}^{2}\|B\widetilde{E}A\|_{F}^{2}\big\}.
\end{displaymath}
Similarly, it is easy to check that
\begin{displaymath}
\|AA^{\dagger}EB^{\dagger}\|_{F}^{2}\leq\min\big\{\|A\|_{2}^{2}\|A^{\dagger}EB^{\dagger}\|_{F}^{2},\|B^{\dagger}\|_{2}^{2}\|A\widetilde{E}B\|_{F}^{2}\big\}.
\end{displaymath}
The desired result then follows from the identities~\eqref{ide1.1} and~\eqref{ide2.1}.
\end{proof}

The following theorem is derived by bounding $\|\widetilde{U}_{1}^{\ast}U_{2}\|_{F}^{2}$ and $\|\widetilde{U}_{2}^{\ast}U_{1}\|_{F}^{2}$ directly.

\begin{theorem}\label{low-thm2}
Let $A\in\mathbb{C}^{m\times n}_{r}$, $B\in\mathbb{C}^{m\times n}_{s}$, $E=B-A$, and $\widetilde{E}=B^{\dagger}-A^{\dagger}$. Define
\begin{align*}
&\beta'_{1}:=\max\bigg\{\frac{\|E\|_{F}^{2}-\|AA^{\dagger}E\|_{F}^{2}}{\|B\|_{2}^{2}},\frac{\|\widetilde{E}\|_{F}^{2}-\|\widetilde{E}AA^{\dagger}\|_{F}^{2}}{\|B^{\dagger}\|_{2}^{2}}\bigg\},\\
&\beta'_{2}:=\max\bigg\{\frac{\|E\|_{F}^{2}-\|BB^{\dagger}E\|_{F}^{2}}{\|A\|_{2}^{2}},\frac{\|\widetilde{E}\|_{F}^{2}-\|\widetilde{E}BB^{\dagger}\|_{F}^{2}}{\|A^{\dagger}\|_{2}^{2}}\bigg\}.
\end{align*}
Then
\begin{equation}\label{low2.1}
\|P_{B}-P_{A}\|_{F}^{2}\geq\beta'_{1}+\beta'_{2}.
\end{equation}
In particular, if $s=r$, then
\begin{equation}\label{low2.2}
\|P_{B}-P_{A}\|_{F}^{2}\geq 2\max\big\{\beta'_{1},\beta'_{2}\big\}.
\end{equation}
\end{theorem}

\begin{proof}
Based on the proof of Theorem~\ref{up-thm2}, we have
\begin{align*}
&\|\widetilde{U}_{1}^{\ast}U_{2}\|_{F}^{2}\geq\frac{\|\widetilde{\Sigma}_{1}\widetilde{U}_{1}^{\ast}U_{2}\|_{F}^{2}}{\|B\|_{2}^{2}}=\frac{\|E\|_{F}^{2}-\|AA^{\dagger}E\|_{F}^{2}}{\|B\|_{2}^{2}},\\
&\|\widetilde{U}_{1}^{\ast}U_{2}\|_{F}^{2}\geq\frac{\|\widetilde{\Sigma}_{1}^{-1}\widetilde{U}_{1}^{\ast}U_{2}\|_{F}^{2}}{\|B^{\dagger}\|_{2}^{2}}=\frac{\|\widetilde{E}\|_{F}^{2}-\|\widetilde{E}AA^{\dagger}\|_{F}^{2}}{\|B^{\dagger}\|_{2}^{2}}.
\end{align*}
Thus,
\begin{displaymath}
\|\widetilde{U}_{1}^{\ast}U_{2}\|_{F}^{2}\geq\max\bigg\{\frac{\|E\|_{F}^{2}-\|AA^{\dagger}E\|_{F}^{2}}{\|B\|_{2}^{2}},\frac{\|\widetilde{E}\|_{F}^{2}-\|\widetilde{E}AA^{\dagger}\|_{F}^{2}}{\|B^{\dagger}\|_{2}^{2}}\bigg\}.
\end{displaymath}
Analogously, we have
\begin{align*}
&\|\widetilde{U}_{2}^{\ast}U_{1}\|_{F}^{2}\geq\frac{\|\widetilde{U}_{2}^{\ast}U_{1}\Sigma_{1}\|_{F}^{2}}{\|A\|_{2}^{2}}=\frac{\|E\|_{F}^{2}-\|BB^{\dagger}E\|_{F}^{2}}{\|A\|_{2}^{2}},\\
&\|\widetilde{U}_{2}^{\ast}U_{1}\|_{F}^{2}\geq\frac{\|\widetilde{U}_{2}^{\ast}U_{1}\Sigma_{1}^{-1}\|_{F}^{2}}{\|A^{\dagger}\|_{2}^{2}}=\frac{\|\widetilde{E}\|_{F}^{2}-\|\widetilde{E}BB^{\dagger}\|_{F}^{2}}{\|A^{\dagger}\|_{2}^{2}}.
\end{align*}
Hence,
\begin{displaymath}
\|\widetilde{U}_{2}^{\ast}U_{1}\|_{F}^{2}\geq\max\bigg\{\frac{\|E\|_{F}^{2}-\|BB^{\dagger}E\|_{F}^{2}}{\|A\|_{2}^{2}},\frac{\|\widetilde{E}\|_{F}^{2}-\|\widetilde{E}BB^{\dagger}\|_{F}^{2}}{\|A^{\dagger}\|_{2}^{2}}\bigg\}.
\end{displaymath}
Using~\eqref{exp1.1} and~\eqref{exp2.1}, we can obtain the estimates~\eqref{low2.1} and~\eqref{low2.2}.
\end{proof}

Using the similar argument as in Corollary~\ref{up-cor}, we can get the following corollary, which is an alternative version of Theorem~\ref{low-thm2}.

\begin{corollary}
Let $A\in\mathbb{C}^{m\times n}_{r}$, $B\in\mathbb{C}^{m\times n}_{s}$, $E=B-A$, and $\widetilde{E}=B^{\dagger}-A^{\dagger}$. Define
\begin{align*}
&\gamma'_{1}:=\max\bigg\{\frac{\|EB^{\dagger}B\|_{F}^{2}-\|A\widetilde{E}B\|_{F}^{2}}{\|B\|_{2}^{2}},\frac{\|B^{\dagger}B\widetilde{E}\|_{F}^{2}-\|B^{\dagger}EA^{\dagger}\|_{F}^{2}}{\|B^{\dagger}\|_{2}^{2}}\bigg\},\\
&\gamma'_{2}:=\max\bigg\{\frac{\|EA^{\dagger}A\|_{F}^{2}-\|B\widetilde{E}A\|_{F}^{2}}{\|A\|_{2}^{2}},\frac{\|A^{\dagger}A\widetilde{E}\|_{F}^{2}-\|A^{\dagger}EB^{\dagger}\|_{F}^{2}}{\|A^{\dagger}\|_{2}^{2}}\bigg\}.
\end{align*}
Then
\begin{equation}\label{low3.1}
\|P_{B}-P_{A}\|_{F}^{2}\geq\gamma'_{1}+\gamma'_{2}.
\end{equation}
In particular, if $s=r$, then
\begin{equation}\label{low3.2}
\|P_{B}-P_{A}\|_{F}^{2}\geq 2\max\big\{\gamma'_{1},\gamma'_{2}\big\}.
\end{equation}
\end{corollary}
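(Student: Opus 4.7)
The plan is to follow the same pattern as Theorem~\ref{low-thm2}, but replace each matrix identity coming from the proof of Theorem~\ref{up-thm2} by its analogue from the proof of Corollary~\ref{up-cor}. Concretely, the four key identities I need are the ones extracted inside the proof of Corollary~\ref{up-cor}, which read
\begin{align*}
\|\widetilde{\Sigma}_{1}\widetilde{U}_{1}^{\ast}U_{2}\|_{F}^{2} &= \|EB^{\dagger}B\|_{F}^{2}-\|A\widetilde{E}B\|_{F}^{2}, &
\|\widetilde{\Sigma}_{1}^{-1}\widetilde{U}_{1}^{\ast}U_{2}\|_{F}^{2} &= \|B^{\dagger}B\widetilde{E}\|_{F}^{2}-\|B^{\dagger}EA^{\dagger}\|_{F}^{2},\\
\|\widetilde{U}_{2}^{\ast}U_{1}\Sigma_{1}\|_{F}^{2} &= \|EA^{\dagger}A\|_{F}^{2}-\|B\widetilde{E}A\|_{F}^{2}, &
\|\widetilde{U}_{2}^{\ast}U_{1}\Sigma_{1}^{-1}\|_{F}^{2} &= \|A^{\dagger}A\widetilde{E}\|_{F}^{2}-\|A^{\dagger}EB^{\dagger}\|_{F}^{2}.
\end{align*}
These follow by comparing \eqref{AE+B}/\eqref{EB+B}, \eqref{B+EA+}/\eqref{B+BE}, \eqref{BE+A}/\eqref{EA+A}, and \eqref{A+EB+}/\eqref{A+AE}, exactly as was already done in that corollary.

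Next, I would apply the elementary submultiplicative inequalities in the opposite direction to the ones used in the upper-bound proof. Using $\|\widetilde{\Sigma}_{1}X\|_{F}^{2}\leq\|B\|_{2}^{2}\|X\|_{F}^{2}$, $\|\widetilde{\Sigma}_{1}^{-1}X\|_{F}^{2}\leq\|B^{\dagger}\|_{2}^{2}\|X\|_{F}^{2}$, and the analogous bounds involving $\Sigma_{1}$ and $\Sigma_{1}^{-1}$ (governed by $\|A\|_{2}$ and $\|A^{\dagger}\|_{2}$), I get lower bounds
\begin{align*}
\|\widetilde{U}_{1}^{\ast}U_{2}\|_{F}^{2} &\geq \max\bigg\{\frac{\|EB^{\dagger}B\|_{F}^{2}-\|A\widetilde{E}B\|_{F}^{2}}{\|B\|_{2}^{2}},\frac{\|B^{\dagger}B\widetilde{E}\|_{F}^{2}-\|B^{\dagger}EA^{\dagger}\|_{F}^{2}}{\|B^{\dagger}\|_{2}^{2}}\bigg\}=\gamma'_{2},\\
\|\widetilde{U}_{2}^{\ast}U_{1}\|_{F}^{2} &\geq \max\bigg\{\frac{\|EA^{\dagger}A\|_{F}^{2}-\|B\widetilde{E}A\|_{F}^{2}}{\|A\|_{2}^{2}},\frac{\|A^{\dagger}A\widetilde{E}\|_{F}^{2}-\|A^{\dagger}EB^{\dagger}\|_{F}^{2}}{\|A^{\dagger}\|_{2}^{2}}\bigg\}=\gamma'_{1}.
\end{align*}

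Finally, the two desired inequalities drop out by combining these two lower bounds with the identities in Lemma~\ref{expression}: plugging into \eqref{exp1.1} gives $\|P_{B}-P_{A}\|_{F}^{2}\geq\gamma'_{1}+\gamma'_{2}$, which is \eqref{low3.1}; and in the special case $s=r$, \eqref{exp2.1} yields $\|P_{B}-P_{A}\|_{F}^{2}=2\|\widetilde{U}_{1}^{\ast}U_{2}\|_{F}^{2}=2\|\widetilde{U}_{2}^{\ast}U_{1}\|_{F}^{2}\geq 2\max\{\gamma'_{1},\gamma'_{2}\}$, which is \eqref{low3.2}. There is no real obstacle here beyond careful bookkeeping: the whole argument is a dualization of Corollary~\ref{up-cor}'s proof, exchanging the roles of the $\leq$ and $\geq$ directions in the spectral norm bounds against $\Sigma_{1},\widetilde{\Sigma}_{1}$ and their inverses. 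The only thing to watch is matching each of the four identities to the correct normalizing factor ($\|A\|_{2}$, $\|A^{\dagger}\|_{2}$, $\|B\|_{2}$, or $\|B^{\dagger}\|_{2}$), but this is dictated by which diagonal matrix multiplies $\widetilde{U}_{1}^{\ast}U_{2}$ or $\widetilde{U}_{2}^{\ast}U_{1}$ on each side.
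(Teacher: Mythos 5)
Your proposal is correct and is exactly the argument the paper intends (the paper omits the proof, noting it follows "the similar argument as in Corollary~\ref{up-cor}"): you reuse the four identities for $\|\widetilde{\Sigma}_{1}^{\pm1}\widetilde{U}_{1}^{\ast}U_{2}\|_{F}^{2}$ and $\|\widetilde{U}_{2}^{\ast}U_{1}\Sigma_{1}^{\pm1}\|_{F}^{2}$ from that corollary's proof, reverse the spectral-norm estimates as in Theorem~\ref{low-thm2}, and conclude via \eqref{exp1.1} and \eqref{exp2.1}. The normalizing factors are matched correctly, so no gap remains.
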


\subsection{Combined upper bounds}

\label{subsec:comup}

In this subsection, we present new combined upper bounds for $\|P_{B}-P_{A}\|_{F}^{2}$ and $\|P_{B^{\ast}}-P_{A^{\ast}}\|_{F}^{2}$, which are established in a parameterized manner. In order to show the combined upper bounds concisely, we first define
\begin{displaymath}
I_{M}(t):=\frac{t}{\|M^{\dagger}\|_{2}^{2}}+\frac{1-t}{\|M\|_{2}^{2}} \qquad \forall\, M\in\mathbb{C}^{m\times n}\backslash\{0\},\ t\in[0,1].
\end{displaymath}

\begin{theorem}\label{comup-thm}
Let $A\in\mathbb{C}^{m\times n}_{r}$, $B\in\mathbb{C}^{m\times n}_{s}$, $E=B-A$, and $\widetilde{E}=B^{\dagger}-A^{\dagger}$. Define
\begin{align*}
&\Phi(\lambda):=\lambda\big(\|E\|_{F}^{2}-\|A\widetilde{E}B\|_{F}^{2}\big)+(1-\lambda)\big(\|\widetilde{E}\|_{F}^{2}-\|B^{\dagger}EA^{\dagger}\|_{F}^{2}\big),\\
&\Psi(\mu):=\mu\big(\|E\|_{F}^{2}-\|B\widetilde{E}A\|_{F}^{2}\big)+(1-\mu)\big(\|\widetilde{E}\|_{F}^{2}-\|A^{\dagger}EB^{\dagger}\|_{F}^{2}\big),
\end{align*}
where $\lambda\in[0,1]$ and $\mu\in[0,1]$ are parameters. Then
\begin{align}
&\|P_{B}-P_{A}\|_{F}^{2}+\min\bigg\{\frac{I_{A}(\lambda)}{I_{B}(\lambda)},\frac{I_{B}(\mu)}{I_{A}(\mu)}\bigg\}\|P_{B^{\ast}}-P_{A^{\ast}}\|_{F}^{2}\leq\frac{\Phi(\lambda)}{I_{B}(\lambda)}+\frac{\Psi(\mu)}{I_{A}(\mu)}, \label{comup1.1}\\
&\|P_{B}-P_{A}\|_{F}^{2}+\|P_{B^{\ast}}-P_{A^{\ast}}\|_{F}^{2}\leq\frac{\Phi(\lambda)+\Psi(\mu)}{\min\big\{I_{A}(\lambda),I_{B}(\lambda),I_{A}(\mu),I_{B}(\mu)\big\}}.\label{comup1.2}
\end{align}
In particular, if $s=r$, then
\begin{align}
&I_{B}(\lambda)\|P_{B}-P_{A}\|_{F}^{2}+I_{A}(\lambda)\|P_{B^{\ast}}-P_{A^{\ast}}\|_{F}^{2}\leq 2\Phi(\lambda),\label{comup2.1}\\
&I_{A}(\mu)\|P_{B}-P_{A}\|_{F}^{2}+I_{B}(\mu)\|P_{B^{\ast}}-P_{A^{\ast}}\|_{F}^{2}\leq 2\Psi(\mu).\label{comup2.2}
\end{align}
\end{theorem}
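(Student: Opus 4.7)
The plan is to leverage the SVD block expressions already established in the proofs of Theorem~\ref{up-thm2} and Corollary~\ref{up-cor} to derive the four identities
\begin{align*}
\|E\|_F^2 - \|A\widetilde{E}B\|_F^2 &= \|\Sigma_{1}V_{1}^{\ast}\widetilde{V}_{2}\|_F^2 + \|\widetilde{\Sigma}_{1}\widetilde{U}_{1}^{\ast}U_{2}\|_F^2,\\
\|\widetilde{E}\|_F^2 - \|B^{\dagger}EA^{\dagger}\|_F^2 &= \|\widetilde{V}_{2}^{\ast}V_{1}\Sigma_{1}^{-1}\|_F^2 + \|\widetilde{\Sigma}_{1}^{-1}\widetilde{U}_{1}^{\ast}U_{2}\|_F^2,\\
\|E\|_F^2 - \|B\widetilde{E}A\|_F^2 &= \|\widetilde{\Sigma}_{1}\widetilde{V}_{1}^{\ast}V_{2}\|_F^2 + \|\widetilde{U}_{2}^{\ast}U_{1}\Sigma_{1}\|_F^2,\\
\|\widetilde{E}\|_F^2 - \|A^{\dagger}EB^{\dagger}\|_F^2 &= \|\widetilde{\Sigma}_{1}^{-1}\widetilde{V}_{1}^{\ast}V_{2}\|_F^2 + \|\widetilde{U}_{2}^{\ast}U_{1}\Sigma_{1}^{-1}\|_F^2.
\end{align*}
Each is an immediate Frobenius-norm difference between one of the $2\times 2$ block matrices (such as $U^\ast E\widetilde{V}$, $\widetilde{V}^\ast\widetilde{E}U$, $\widetilde{U}^\ast EV$, $V^\ast\widetilde{E}\widetilde{U}$) and the corresponding compression used earlier; no new block computation is required.

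Substituting the first two identities into $\Phi(\lambda)$ and grouping terms by their cross-block factor produces a convex combination of the form
\begin{displaymath}
\bigl(\lambda\|\widetilde{\Sigma}_{1}\widetilde{U}_{1}^{\ast}U_{2}\|_F^2 + (1-\lambda)\|\widetilde{\Sigma}_{1}^{-1}\widetilde{U}_{1}^{\ast}U_{2}\|_F^2\bigr) + \bigl(\lambda\|\Sigma_{1}V_{1}^{\ast}\widetilde{V}_{2}\|_F^2 + (1-\lambda)\|\widetilde{V}_{2}^{\ast}V_{1}\Sigma_{1}^{-1}\|_F^2\bigr),
\end{displaymath}
and applying the elementary bounds $\|SX\|_F^2 \geq \|X\|_F^2/\|S^{-1}\|_2^2$ and $\|S^{-1}X\|_F^2 \geq \|X\|_F^2/\|S\|_2^2$ with $S\in\{\Sigma_1,\widetilde{\Sigma}_1\}$ yields the key inequality
\begin{displaymath}
\Phi(\lambda) \geq I_B(\lambda)\|\widetilde{U}_{1}^{\ast}U_{2}\|_F^2 + I_A(\lambda)\|\widetilde{V}_{2}^{\ast}V_{1}\|_F^2.
\end{displaymath}
The analogous manipulation with the last two identities gives
\begin{displaymath}
\Psi(\mu) \geq I_A(\mu)\|\widetilde{U}_{2}^{\ast}U_{1}\|_F^2 + I_B(\mu)\|\widetilde{V}_{1}^{\ast}V_{2}\|_F^2.
\end{displaymath}

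From these two inequalities all four conclusions follow mechanically. For~\eqref{comup1.1} I divide the first bound by $I_B(\lambda)$ and the second by $I_A(\mu)$, add them, replace the coefficients of $\|\widetilde{V}_{2}^{\ast}V_{1}\|_F^2$ and $\|\widetilde{V}_{1}^{\ast}V_{2}\|_F^2$ by their common minimum, and then apply~\eqref{exp1.1}--\eqref{exp1.2}. For~\eqref{comup1.2} I simply add the two bounds and factor out $\min\{I_A(\lambda),I_B(\lambda),I_A(\mu),I_B(\mu)\}$ before invoking the same identities. For~\eqref{comup2.1}--\eqref{comup2.2}, the identities~\eqref{exp2.1}--\eqref{exp2.2} show that under $s=r$ each of the four cross-block norms equals half of $\|P_B-P_A\|_F^2$ or $\|P_{B^\ast}-P_{A^\ast}\|_F^2$, so multiplying each key inequality by $2$ produces the stated bounds directly. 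The main obstacle, and the reason two independent parameters $\lambda$ and $\mu$ are needed, is precisely that the four cross-blocks split into the pairs $(\widetilde{U}_1^\ast U_2,\widetilde{V}_2^\ast V_1)$ inside $\Phi$ and $(\widetilde{U}_2^\ast U_1,\widetilde{V}_1^\ast V_2)$ inside $\Psi$; lining up each cross-block $X$ simultaneously with both $SX$ and $S^{-1}X$ for the correct $S$ is what makes the convex combinations $I_A(\cdot)$, $I_B(\cdot)$ emerge, and the unequal pairing is what forces the $\min$ appearing in~\eqref{comup1.1}.
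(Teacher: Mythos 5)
Your proposal is correct and follows essentially the same route as the paper: the four block identities you state are exactly the ones behind the paper's inequalities \eqref{comrela1.1}--\eqref{comrela1.4}, and your two key inequalities coincide with the paper's \eqref{compart1.1} and \eqref{compart1.2}, after which the four conclusions are deduced from Lemma~\ref{expression} in the same way.
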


\begin{proof}
Using~\eqref{E1} and~\eqref{AE+B}, we obtain
\begin{align*}
\|E\|_{F}^{2}&=\|U_{1}^{\ast}\widetilde{U}_{1}\widetilde{\Sigma}_{1}-\Sigma_{1}V_{1}^{\ast}\widetilde{V}_{1}\|_{F}^{2}+\|\Sigma_{1}V_{1}^{\ast}\widetilde{V}_{2}\|_{F}^{2}+\|U_{2}^{\ast}\widetilde{U}_{1}\widetilde{\Sigma}_{1}\|_{F}^{2}\\
&\geq\|A\widetilde{E}B\|_{F}^{2}+\frac{\|V_{1}^{\ast}\widetilde{V}_{2}\|_{F}^{2}}{\|A^{\dagger}\|_{2}^{2}}+\frac{\|U_{2}^{\ast}\widetilde{U}_{1}\|_{F}^{2}}{\|B^{\dagger}\|_{2}^{2}},
\end{align*}
which gives
\begin{equation}\label{comrela1.1}
\frac{\|\widetilde{U}_{1}^{\ast}U_{2}\|_{F}^{2}}{\|B^{\dagger}\|_{2}^{2}}+\frac{\|\widetilde{V}_{2}^{\ast}V_{1}\|_{F}^{2}}{\|A^{\dagger}\|_{2}^{2}}\leq\|E\|_{F}^{2}-\|A\widetilde{E}B\|_{F}^{2}.
\end{equation}
By~\eqref{E2} and~\eqref{BE+A}, we have
\begin{align*}
\|E\|_{F}^{2}&=\|\widetilde{\Sigma}_{1}\widetilde{V}_{1}^{\ast}V_{1}-\widetilde{U}_{1}^{\ast}U_{1}\Sigma_{1}\|_{F}^{2}+\|\widetilde{\Sigma}_{1}\widetilde{V}_{1}^{\ast}V_{2}\|_{F}^{2}+\|\widetilde{U}_{2}^{\ast}U_{1}\Sigma_{1}\|_{F}^{2}\\
&\geq\|B\widetilde{E}A\|_{F}^{2}+\frac{\|\widetilde{V}_{1}^{\ast}V_{2}\|_{F}^{2}}{\|B^{\dagger}\|_{2}^{2}}+\frac{\|\widetilde{U}_{2}^{\ast}U_{1}\|_{F}^{2}}{\|A^{\dagger}\|_{2}^{2}},
\end{align*}
which yields
\begin{equation}\label{comrela1.2}
\frac{\|\widetilde{U}_{2}^{\ast}U_{1}\|_{F}^{2}}{\|A^{\dagger}\|_{2}^{2}}+\frac{\|\widetilde{V}_{1}^{\ast}V_{2}\|_{F}^{2}}{\|B^{\dagger}\|_{2}^{2}}\leq\|E\|_{F}^{2}-\|B\widetilde{E}A\|_{F}^{2}.
\end{equation}
Similarly, we can derive from~\eqref{E+1}, \eqref{E+2}, \eqref{B+EA+}, and~\eqref{A+EB+} that
\begin{align}
&\frac{\|\widetilde{U}_{1}^{\ast}U_{2}\|_{F}^{2}}{\|B\|_{2}^{2}}+\frac{\|\widetilde{V}_{2}^{\ast}V_{1}\|_{F}^{2}}{\|A\|_{2}^{2}}\leq\|\widetilde{E}\|_{F}^{2}-\|B^{\dagger}EA^{\dagger}\|_{F}^{2},\label{comrela1.3}\\
&\frac{\|\widetilde{U}_{2}^{\ast}U_{1}\|_{F}^{2}}{\|A\|_{2}^{2}}+\frac{\|\widetilde{V}_{1}^{\ast}V_{2}\|_{F}^{2}}{\|B\|_{2}^{2}}\leq\|\widetilde{E}\|_{F}^{2}-\|A^{\dagger}EB^{\dagger}\|_{F}^{2}.\label{comrela1.4}
\end{align}
From~\eqref{comrela1.1} and~\eqref{comrela1.3}, we deduce that
\begin{equation}\label{compart1.1}
I_{B}(\lambda)\|\widetilde{U}_{1}^{\ast}U_{2}\|_{F}^{2}+I_{A}(\lambda)\|\widetilde{V}_{2}^{\ast}V_{1}\|_{F}^{2}\leq\Phi(\lambda).
\end{equation}
In light of~\eqref{comrela1.2} and~\eqref{comrela1.4}, we have
\begin{equation}\label{compart1.2}
I_{A}(\mu)\|\widetilde{U}_{2}^{\ast}U_{1}\|_{F}^{2}+I_{B}(\mu)\|\widetilde{V}_{1}^{\ast}V_{2}\|_{F}^{2}\leq\Psi(\mu).
\end{equation}

Combining~\eqref{exp1.1}, \eqref{exp1.2}, \eqref{compart1.1}, and~\eqref{compart1.2}, we can arrive at the estimates~\eqref{comup1.1} and~\eqref{comup1.2}. In particular, if $s=r$, using~\eqref{exp2.1}, \eqref{exp2.2}, \eqref{compart1.1}, and~\eqref{compart1.2}, we can obtain the estimates~\eqref{comup2.1} and~\eqref{comup2.2}.
\end{proof}

Under the assumptions of Theorem~\ref{comup-thm}, taking $\lambda=\mu=1$, we can get the following corollary.

\begin{corollary}
Let $A\in\mathbb{C}^{m\times n}_{r}$, $B\in\mathbb{C}^{m\times n}_{s}$, $E=B-A$, and $\widetilde{E}=B^{\dagger}-A^{\dagger}$. Then
\begin{align}
\|P_{B}-P_{A}\|_{F}^{2}&+\min\bigg\{\frac{\|A^{\dagger}\|_{2}^{2}}{\|B^{\dagger}\|_{2}^{2}},\frac{\|B^{\dagger}\|_{2}^{2}}{\|A^{\dagger}\|_{2}^{2}}\bigg\}\|P_{B^{\ast}}-P_{A^{\ast}}\|_{F}^{2}\notag\\
&\quad\leq\big(\|A^{\dagger}\|_{2}^{2}+\|B^{\dagger}\|_{2}^{2}\big)\|E\|_{F}^{2}-\|B^{\dagger}\|_{2}^{2}\|A\widetilde{E}B\|_{F}^{2}-\|A^{\dagger}\|_{2}^{2}\|B\widetilde{E}A\|_{F}^{2},\label{corup1.1}\\
\|P_{B}-P_{A}\|_{F}^{2}&+\|P_{B^{\ast}}-P_{A^{\ast}}\|_{F}^{2}\leq\max\big\{\|A^{\dagger}\|_{2}^{2},\|B^{\dagger}\|_{2}^{2}\big\}\big(2\|E\|_{F}^{2}-\|A\widetilde{E}B\|_{F}^{2}-\|B\widetilde{E}A\|_{F}^{2}\big).\label{corup1.2}
\end{align}
In particular, if $s=r$, then
\begin{align}
\|P_{B}-P_{A}\|_{F}^{2}&+\min\bigg\{\frac{\|A^{\dagger}\|_{2}^{2}}{\|B^{\dagger}\|_{2}^{2}},\frac{\|B^{\dagger}\|_{2}^{2}}{\|A^{\dagger}\|_{2}^{2}}\bigg\}\|P_{B^{\ast}}-P_{A^{\ast}}\|_{F}^{2}\notag\\
&\quad\leq 2\min\big\{\|A^{\dagger}\|_{2}^{2}\|E\|_{F}^{2}-\|A^{\dagger}\|_{2}^{2}\|B\widetilde{E}A\|_{F}^{2},\|B^{\dagger}\|_{2}^{2}\|E\|_{F}^{2}-\|B^{\dagger}\|_{2}^{2}\|A\widetilde{E}B\|_{F}^{2}\big\},\label{corup2.1}\\
\|P_{B}-P_{A}\|_{F}^{2}&+\|P_{B^{\ast}}-P_{A^{\ast}}\|_{F}^{2}\leq\frac{2\|A^{\dagger}\|_{2}^{2}\|B^{\dagger}\|_{2}^{2}}{\|A^{\dagger}\|_{2}^{2}+\|B^{\dagger}\|_{2}^{2}}\big(2\|E\|_{F}^{2}-\|A\widetilde{E}B\|_{F}^{2}-\|B\widetilde{E}A\|_{F}^{2}\big).\label{corup2.2}
\end{align}
\end{corollary}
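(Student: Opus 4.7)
The plan is to obtain all four displayed estimates by direct specialization of Theorem~\ref{comup-thm} at the parameter choice $\lambda=\mu=1$, since this is exactly how the corollary is framed. The first step is to evaluate the auxiliary quantities in that theorem at $t=1$: one has $I_{A}(1)=1/\|A^{\dagger}\|_{2}^{2}$ and $I_{B}(1)=1/\|B^{\dagger}\|_{2}^{2}$, while $\Phi(1)=\|E\|_{F}^{2}-\|A\widetilde{E}B\|_{F}^{2}$ and $\Psi(1)=\|E\|_{F}^{2}-\|B\widetilde{E}A\|_{F}^{2}$. This reduces everything to elementary bookkeeping.

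For the general-rank estimates, I would substitute these values directly into~\eqref{comup1.1} and~\eqref{comup1.2}. The ratios $I_{A}(1)/I_{B}(1)$ and $I_{B}(1)/I_{A}(1)$ become $\|B^{\dagger}\|_{2}^{2}/\|A^{\dagger}\|_{2}^{2}$ and $\|A^{\dagger}\|_{2}^{2}/\|B^{\dagger}\|_{2}^{2}$, and the terms $\Phi(1)/I_{B}(1)$ and $\Psi(1)/I_{A}(1)$ expand into the weighted sum on the right-hand side of~\eqref{corup1.1}. For~\eqref{corup1.2}, $\min\{I_{A}(1),I_{B}(1)\}=1/\max\{\|A^{\dagger}\|_{2}^{2},\|B^{\dagger}\|_{2}^{2}\}$, so dividing $\Phi(1)+\Psi(1)=2\|E\|_{F}^{2}-\|A\widetilde{E}B\|_{F}^{2}-\|B\widetilde{E}A\|_{F}^{2}$ by this minimum yields exactly the stated bound.

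For the equal-rank case $s=r$, I would start from~\eqref{comup2.1} and~\eqref{comup2.2} at $\lambda=\mu=1$, giving the two inequalities
\begin{align*}
\tfrac{1}{\|B^{\dagger}\|_{2}^{2}}\|P_{B}-P_{A}\|_{F}^{2}+\tfrac{1}{\|A^{\dagger}\|_{2}^{2}}\|P_{B^{\ast}}-P_{A^{\ast}}\|_{F}^{2}&\leq 2\big(\|E\|_{F}^{2}-\|A\widetilde{E}B\|_{F}^{2}\big),\\
\tfrac{1}{\|A^{\dagger}\|_{2}^{2}}\|P_{B}-P_{A}\|_{F}^{2}+\tfrac{1}{\|B^{\dagger}\|_{2}^{2}}\|P_{B^{\ast}}-P_{A^{\ast}}\|_{F}^{2}&\leq 2\big(\|E\|_{F}^{2}-\|B\widetilde{E}A\|_{F}^{2}\big).
\end{align*}
Multiplying the first by $\|B^{\dagger}\|_{2}^{2}$ and the second by $\|A^{\dagger}\|_{2}^{2}$ produces two bounds whose coefficients of $\|P_{B^{\ast}}-P_{A^{\ast}}\|_{F}^{2}$ are $\|B^{\dagger}\|_{2}^{2}/\|A^{\dagger}\|_{2}^{2}$ and $\|A^{\dagger}\|_{2}^{2}/\|B^{\dagger}\|_{2}^{2}$, respectively; replacing each coefficient by the smaller of these two ratios and then taking the minimum of the resulting right-hand sides gives~\eqref{corup2.1}.

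Finally, for~\eqref{corup2.2} I would simply add the two displayed inequalities above, which yields $\bigl(\tfrac{1}{\|A^{\dagger}\|_{2}^{2}}+\tfrac{1}{\|B^{\dagger}\|_{2}^{2}}\bigr)\bigl(\|P_{B}-P_{A}\|_{F}^{2}+\|P_{B^{\ast}}-P_{A^{\ast}}\|_{F}^{2}\bigr)$ on the left, and then dividing by $\tfrac{1}{\|A^{\dagger}\|_{2}^{2}}+\tfrac{1}{\|B^{\dagger}\|_{2}^{2}}$ gives the prefactor $\tfrac{2\|A^{\dagger}\|_{2}^{2}\|B^{\dagger}\|_{2}^{2}}{\|A^{\dagger}\|_{2}^{2}+\|B^{\dagger}\|_{2}^{2}}$ in~\eqref{corup2.2}. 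Since Theorem~\ref{comup-thm} has already done all of the spectral/SVD work, there is really no obstacle here: the only thing to be careful about is correctly tracking which bound supplies which side of the $\min/\max$ in~\eqref{corup2.1}, and making sure the coefficient of $\|P_{B^{\ast}}-P_{A^{\ast}}\|_{F}^{2}$ is replaced by the smaller ratio before taking the minimum over the right-hand sides.
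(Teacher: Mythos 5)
Your proposal is correct and follows exactly the paper's intended route: the corollary is obtained by specializing Theorem~\ref{comup-thm} at $\lambda=\mu=1$, and your evaluations of $I_{A}(1)$, $I_{B}(1)$, $\Phi(1)$, $\Psi(1)$ and the subsequent rescaling/combination of~\eqref{comup2.1}--\eqref{comup2.2} for the equal-rank case all check out. The paper merely states ``taking $\lambda=\mu=1$'' and leaves these bookkeeping steps implicit, so you have simply filled in the same argument in detail.
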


\begin{remark}\rm
Evidently, the estimates~\eqref{corup1.1}, \eqref{corup2.1}, and~\eqref{corup2.2} are sharper than~\eqref{Chen-comb1}, \eqref{Chen-comb2}, and~\eqref{Chen-comb3}, respectively. In addition, since
\begin{align*}
&\|A\widetilde{E}B\|_{F}^{2}=\|(U_{1}^{\ast}\widetilde{U}_{1}-\Sigma_{1}V_{1}^{\ast}\widetilde{V}_{1}\widetilde{\Sigma}_{1}^{-1})\widetilde{\Sigma}_{1}\|_{F}^{2}\geq\frac{\|AA^{\dagger}EB^{\dagger}\|_{F}^{2}}{\|B^{\dagger}\|_{2}^{2}}\geq\frac{\|A^{\dagger}EB^{\dagger}\|_{F}^{2}}{\|A^{\dagger}\|_{2}^{2}\|B^{\dagger}\|_{2}^{2}},\\
&\|B\widetilde{E}A\|_{F}^{2}=\|(\widetilde{\Sigma}_{1}\widetilde{V}_{1}^{\ast}V_{1}\Sigma_{1}^{-1}-\widetilde{U}_{1}^{\ast}U_{1})\Sigma_{1}\|_{F}^{2}\geq\frac{\|BB^{\dagger}EA^{\dagger}\|_{F}^{2}}{\|A^{\dagger}\|_{2}^{2}}\geq\frac{\|B^{\dagger}EA^{\dagger}\|_{F}^{2}}{\|A^{\dagger}\|_{2}^{2}\|B^{\dagger}\|_{2}^{2}},
\end{align*}
we conclude that~\eqref{corup1.2} and~\eqref{corup2.2} are sharper than~\eqref{Li-comb1} and~\eqref{Li-comb2}, respectively.
\end{remark}

\subsection{Combined lower bounds}

\label{subsec:comlow}

In this subsection, we develop some combined lower bounds for $\|P_{B}-P_{A}\|_{F}^{2}$ and $\|P_{B^{\ast}}-P_{A^{\ast}}\|_{F}^{2}$. For simplicity, we define
\begin{displaymath}
J_{M}(t):=t\|M\|_{2}^{2}+(1-t)\|M^{\dagger}\|_{2}^{2} \qquad \forall\, M\in\mathbb{C}^{m\times n}\backslash\{0\},\ t\in[0,1].
\end{displaymath}

\begin{theorem}\label{comlow-thm}
Let $A\in\mathbb{C}^{m\times n}_{r}$, $B\in\mathbb{C}^{m\times n}_{s}$, $E=B-A$, and $\widetilde{E}=B^{\dagger}-A^{\dagger}$. Let $\Phi(\xi)$ and $\Psi(\eta)$ be defined as in Theorem~{\rm\ref{comup-thm}}, where $\xi\in[0,1]$ and $\eta\in[0,1]$ are parameters. Then
\begin{align}
&\|P_{B}-P_{A}\|_{F}^{2}+\max\bigg\{\frac{J_{A}(\xi)}{J_{B}(\xi)},\frac{J_{B}(\eta)}{J_{A}(\eta)}\bigg\}\|P_{B^{\ast}}-P_{A^{\ast}}\|_{F}^{2}\geq\frac{\Phi(\xi)}{J_{B}(\xi)}+\frac{\Psi(\eta)}{J_{A}(\eta)}, \label{comlow1.1}\\
&\|P_{B}-P_{A}\|_{F}^{2}+\|P_{B^{\ast}}-P_{A^{\ast}}\|_{F}^{2}\geq\frac{\Phi(\xi)+\Psi(\eta)}{\max\big\{J_{A}(\xi),J_{B}(\xi),J_{A}(\eta),J_{B}(\eta)\big\}}.\label{comlow1.2}
\end{align}
In particular, if $s=r$, then
\begin{align}
&J_{B}(\xi)\|P_{B}-P_{A}\|_{F}^{2}+J_{A}(\xi)\|P_{B^{\ast}}-P_{A^{\ast}}\|_{F}^{2}\geq 2\Phi(\xi),\label{comlow2.1}\\
&J_{A}(\eta)\|P_{B}-P_{A}\|_{F}^{2}+J_{B}(\eta)\|P_{B^{\ast}}-P_{A^{\ast}}\|_{F}^{2}\geq 2\Psi(\eta).\label{comlow2.2}
\end{align}
\end{theorem}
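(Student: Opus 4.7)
The plan is to mirror the proof of Theorem~\ref{comup-thm}, reversing the direction of the elementary spectral-norm estimate used there to bound the off-diagonal blocks. In that proof, quantities such as $\|\Sigma_{1}V_{1}^{\ast}\widetilde{V}_{2}\|_{F}^{2}$ were bounded below by $\|V_{1}^{\ast}\widetilde{V}_{2}\|_{F}^{2}/\|A^{\dagger}\|_{2}^{2}$; to obtain the lower bounds desired here, I will use the submultiplicative inequality in the opposite direction, namely $\|\Sigma_{1}V_{1}^{\ast}\widetilde{V}_{2}\|_{F}^{2}\leq\|A\|_{2}^{2}\|V_{1}^{\ast}\widetilde{V}_{2}\|_{F}^{2}$, together with its three analogues involving $\Sigma_{1}^{-1}$, $\widetilde{\Sigma}_{1}$, and $\widetilde{\Sigma}_{1}^{-1}$. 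This is precisely why the quantity $I_{M}(t)$ of the upper-bound theorem is replaced by $J_{M}(t)$ here.

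First I revisit the block identities~\eqref{E1}, \eqref{AE+B}, \eqref{E+1}, \eqref{B+EA+}, \eqref{E2}, \eqref{BE+A}, \eqref{E+2}, and~\eqref{A+EB+} from the proofs of Theorems~\ref{up-thm2} and~\ref{comup-thm}. Applying the reversed spectral inequality to each of the four matching pairs yields
\begin{align*}
\|E\|_{F}^{2}-\|A\widetilde{E}B\|_{F}^{2}&\leq\|B\|_{2}^{2}\|\widetilde{U}_{1}^{\ast}U_{2}\|_{F}^{2}+\|A\|_{2}^{2}\|\widetilde{V}_{2}^{\ast}V_{1}\|_{F}^{2},\\
\|\widetilde{E}\|_{F}^{2}-\|B^{\dagger}EA^{\dagger}\|_{F}^{2}&\leq\|B^{\dagger}\|_{2}^{2}\|\widetilde{U}_{1}^{\ast}U_{2}\|_{F}^{2}+\|A^{\dagger}\|_{2}^{2}\|\widetilde{V}_{2}^{\ast}V_{1}\|_{F}^{2},\\
\|E\|_{F}^{2}-\|B\widetilde{E}A\|_{F}^{2}&\leq\|A\|_{2}^{2}\|\widetilde{U}_{2}^{\ast}U_{1}\|_{F}^{2}+\|B\|_{2}^{2}\|\widetilde{V}_{1}^{\ast}V_{2}\|_{F}^{2},\\
\|\widetilde{E}\|_{F}^{2}-\|A^{\dagger}EB^{\dagger}\|_{F}^{2}&\leq\|A^{\dagger}\|_{2}^{2}\|\widetilde{U}_{2}^{\ast}U_{1}\|_{F}^{2}+\|B^{\dagger}\|_{2}^{2}\|\widetilde{V}_{1}^{\ast}V_{2}\|_{F}^{2}.
\end{align*}
Taking the convex combination with weight $\xi$ of the first two and with weight $\eta$ of the last two, and using the definition of $J_{M}(t)$, I obtain
\begin{equation*}
\Phi(\xi)\leq J_{B}(\xi)\|\widetilde{U}_{1}^{\ast}U_{2}\|_{F}^{2}+J_{A}(\xi)\|\widetilde{V}_{2}^{\ast}V_{1}\|_{F}^{2},\qquad \Psi(\eta)\leq J_{A}(\eta)\|\widetilde{U}_{2}^{\ast}U_{1}\|_{F}^{2}+J_{B}(\eta)\|\widetilde{V}_{1}^{\ast}V_{2}\|_{F}^{2}.
\end{equation*}

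To close the argument for~\eqref{comlow1.1}, I divide the first of the two displayed inequalities by $J_{B}(\xi)$ and the second by $J_{A}(\eta)$, then add; the left-hand side becomes $\Phi(\xi)/J_{B}(\xi)+\Psi(\eta)/J_{A}(\eta)$, while the right-hand side is dominated by $\|P_{B}-P_{A}\|_{F}^{2}+c\|P_{B^{\ast}}-P_{A^{\ast}}\|_{F}^{2}$ (via~\eqref{exp1.1} and~\eqref{exp1.2}) for any $c\geq\max\{J_{A}(\xi)/J_{B}(\xi),J_{B}(\eta)/J_{A}(\eta)\}$, and taking $c$ equal to this maximum produces~\eqref{comlow1.1}. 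For~\eqref{comlow1.2}, I instead add the two displayed $J$-inequalities without prior rescaling and divide by $\max\{J_{A}(\xi),J_{B}(\xi),J_{A}(\eta),J_{B}(\eta)\}$, exploiting that each $J$-coefficient on the right is bounded above by this maximum. In the special case $s=r$, the symmetry identities~\eqref{exp2.1} and~\eqref{exp2.2} let me replace every $\|\widetilde{U}_{i}^{\ast}U_{j}\|_{F}^{2}$ by $\tfrac{1}{2}\|P_{B}-P_{A}\|_{F}^{2}$ and every $\|\widetilde{V}_{i}^{\ast}V_{j}\|_{F}^{2}$ by $\tfrac{1}{2}\|P_{B^{\ast}}-P_{A^{\ast}}\|_{F}^{2}$ in the two $\Phi$- and $\Psi$-inequalities above, which yields~\eqref{comlow2.1} and~\eqref{comlow2.2} at once.

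I do not anticipate a serious obstacle: the whole argument runs parallel to the proof of Theorem~\ref{comup-thm}, with the submultiplicative spectral-norm estimate used in the opposite direction and the roles of $\|M\|_{2}^{2}$ and $1/\|M^{\dagger}\|_{2}^{2}$ interchanged, which is exactly the difference between $I_{M}(t)$ and $J_{M}(t)$. The only point requiring care, and the one I would double-check, is the appearance of $\max$ rather than $\min$ in the coefficient on $\|P_{B^{\ast}}-P_{A^{\ast}}\|_{F}^{2}$ in~\eqref{comlow1.1}: because we are now producing a lower bound on a weighted sum, the coefficient must be large enough to dominate both cross-term ratios simultaneously, and their maximum is precisely the smallest admissible choice.
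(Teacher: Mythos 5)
Your argument is correct and follows essentially the same route as the paper: the four reversed inequalities you derive are exactly the paper's intermediate estimates \eqref{comrela2.1}--\eqref{comrela2.4}, and your convex combinations with weights $\xi$ and $\eta$ reproduce the paper's bounds $J_{B}(\xi)\|\widetilde{U}_{1}^{\ast}U_{2}\|_{F}^{2}+J_{A}(\xi)\|\widetilde{V}_{2}^{\ast}V_{1}\|_{F}^{2}\geq\Phi(\xi)$ and $J_{A}(\eta)\|\widetilde{U}_{2}^{\ast}U_{1}\|_{F}^{2}+J_{B}(\eta)\|\widetilde{V}_{1}^{\ast}V_{2}\|_{F}^{2}\geq\Psi(\eta)$, after which the passage to \eqref{comlow1.1}--\eqref{comlow2.2} via Lemma~\ref{expression} is the same (the paper merely leaves these final steps implicit by analogy with Theorem~\ref{comup-thm}, whereas you spell them out).
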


\begin{proof}
According to the proof of Theorem~\ref{comup-thm}, we deduce that
\begin{align}
&\|B\|_{2}^{2}\|\widetilde{U}_{1}^{\ast}U_{2}\|_{F}^{2}+\|A\|_{2}^{2}\|\widetilde{V}_{2}^{\ast}V_{1}\|_{F}^{2}\geq\|E\|_{F}^{2}-\|A\widetilde{E}B\|_{F}^{2},\label{comrela2.1}\\
&\|A\|_{2}^{2}\|\widetilde{U}_{2}^{\ast}U_{1}\|_{F}^{2}+\|B\|_{2}^{2}\|\widetilde{V}_{1}^{\ast}V_{2}\|_{F}^{2}\geq\|E\|_{F}^{2}-\|B\widetilde{E}A\|_{F}^{2},\label{comrela2.2}\\
&\|B^{\dagger}\|_{2}^{2}\|\widetilde{U}_{1}^{\ast}U_{2}\|_{F}^{2}+\|A^{\dagger}\|_{2}^{2}\|\widetilde{V}_{2}^{\ast}V_{1}\|_{F}^{2}\geq\|\widetilde{E}\|_{F}^{2}-\|B^{\dagger}EA^{\dagger}\|_{F}^{2},\label{comrela2.3}\\
&\|A^{\dagger}\|_{2}^{2}\|\widetilde{U}_{2}^{\ast}U_{1}\|_{F}^{2}+\|B^{\dagger}\|_{2}^{2}\|\widetilde{V}_{1}^{\ast}V_{2}\|_{F}^{2}\geq\|\widetilde{E}\|_{F}^{2}-\|A^{\dagger}EB^{\dagger}\|_{F}^{2}.\label{comrela2.4}
\end{align}
Using~\eqref{comrela2.1}, \eqref{comrela2.2}, \eqref{comrela2.3}, and~\eqref{comrela2.4}, we can obtain
\begin{align*}
&J_{B}(\xi)\|\widetilde{U}_{1}^{\ast}U_{2}\|_{F}^{2}+J_{A}(\xi)\|\widetilde{V}_{2}^{\ast}V_{1}\|_{F}^{2}\geq \Phi(\xi),\\
&J_{A}(\eta)\|\widetilde{U}_{2}^{\ast}U_{1}\|_{F}^{2}+J_{B}(\eta)\|\widetilde{V}_{1}^{\ast}V_{2}\|_{F}^{2}\geq \Psi(\eta).
\end{align*}
The rest of the proof is similar to Theorem~\ref{comup-thm}.
\end{proof}

Taking $\xi=\eta=0$, we can obtain the following corollary.

\begin{corollary}
Let $A\in\mathbb{C}^{m\times n}_{r}$, $B\in\mathbb{C}^{m\times n}_{s}$, $E=B-A$, and $\widetilde{E}=B^{\dagger}-A^{\dagger}$. Then
\begin{align}
\|P_{B}-P_{A}\|_{F}^{2}&+\max\bigg\{\frac{\|A^{\dagger}\|_{2}^{2}}{\|B^{\dagger}\|_{2}^{2}},\frac{\|B^{\dagger}\|_{2}^{2}}{\|A^{\dagger}\|_{2}^{2}}\bigg\}\|P_{B^{\ast}}-P_{A^{\ast}}\|_{F}^{2}\notag\\
&\quad\geq\bigg(\frac{1}{\|A^{\dagger}\|_{2}^{2}}+\frac{1}{\|B^{\dagger}\|_{2}^{2}}\bigg)\|\widetilde{E}\|_{F}^{2}-\frac{\|A^{\dagger}EB^{\dagger}\|_{F}^{2}}{\|A^{\dagger}\|_{2}^{2}}-\frac{\|B^{\dagger}EA^{\dagger}\|_{F}^{2}}{\|B^{\dagger}\|_{2}^{2}},\label{corlow1.1}\\
\|P_{B}-P_{A}\|_{F}^{2}&+\|P_{B^{\ast}}-P_{A^{\ast}}\|_{F}^{2}\geq\frac{2\|\widetilde{E}\|_{F}^{2}-\|A^{\dagger}EB^{\dagger}\|_{F}^{2}-\|B^{\dagger}EA^{\dagger}\|_{F}^{2}}{\max\big\{\|A^{\dagger}\|_{2}^{2},\|B^{\dagger}\|_{2}^{2}\big\}}.\label{corlow1.2}
\end{align}
In particular, if $s=r$, then
\begin{align}
\|P_{B}-P_{A}\|_{F}^{2}&+\max\bigg\{\frac{\|A^{\dagger}\|_{2}^{2}}{\|B^{\dagger}\|_{2}^{2}},\frac{\|B^{\dagger}\|_{2}^{2}}{\|A^{\dagger}\|_{2}^{2}}\bigg\}\|P_{B^{\ast}}-P_{A^{\ast}}\|_{F}^{2}\notag\\
&\quad\geq 2\max\bigg\{\frac{\|\widetilde{E}\|_{F}^{2}-\|B^{\dagger}EA^{\dagger}\|_{F}^{2}}{\|B^{\dagger}\|_{2}^{2}},\frac{\|\widetilde{E}\|_{F}^{2}-\|A^{\dagger}EB^{\dagger}\|_{F}^{2}}{\|A^{\dagger}\|_{2}^{2}}\bigg\},\label{corlow2.1}\\
\|P_{B}-P_{A}\|_{F}^{2}&+\|P_{B^{\ast}}-P_{A^{\ast}}\|_{F}^{2}\geq \frac{2}{\|A^{\dagger}\|_{2}^{2}+\|B^{\dagger}\|_{2}^{2}}\big(2\|\widetilde{E}\|_{F}^{2}-\|A^{\dagger}EB^{\dagger}\|_{F}^{2}-\|B^{\dagger}EA^{\dagger}\|_{F}^{2}\big).\label{corlow2.2}
\end{align}
\end{corollary}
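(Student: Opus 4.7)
The plan is to apply Theorem~\ref{comlow-thm} directly with the specific parameter choice $\xi = \eta = 0$. At this choice the auxiliary quantities collapse to $J_A(0) = \|A^{\dagger}\|_{2}^{2}$, $J_B(0) = \|B^{\dagger}\|_{2}^{2}$, $\Phi(0) = \|\widetilde{E}\|_{F}^{2} - \|B^{\dagger}EA^{\dagger}\|_{F}^{2}$, and $\Psi(0) = \|\widetilde{E}\|_{F}^{2} - \|A^{\dagger}EB^{\dagger}\|_{F}^{2}$. Substituting these values into~\eqref{comlow1.1} produces~\eqref{corlow1.1} immediately. For~\eqref{corlow1.2}, I would substitute into~\eqref{comlow1.2} and observe that the four-term maximum $\max\{J_A(0), J_B(0), J_A(0), J_B(0)\}$ reduces to $\max\{\|A^{\dagger}\|_{2}^{2}, \|B^{\dagger}\|_{2}^{2}\}$ since only two distinct values appear.

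For the case $s = r$, substituting $\xi = 0$ into~\eqref{comlow2.1} and dividing by $J_B(0) = \|B^{\dagger}\|_{2}^{2}$ yields a bound of the form $\|P_B - P_A\|_F^2 + (\|A^{\dagger}\|_{2}^{2}/\|B^{\dagger}\|_{2}^{2})\|P_{B^{\ast}} - P_{A^{\ast}}\|_F^2 \geq 2(\|\widetilde{E}\|_F^2 - \|B^{\dagger}EA^{\dagger}\|_F^2)/\|B^{\dagger}\|_{2}^{2}$, and substituting $\eta = 0$ into~\eqref{comlow2.2} with division by $\|A^{\dagger}\|_{2}^{2}$ gives the symmetric mirror inequality. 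To combine these into~\eqref{corlow2.1}, I would replace the coefficient of $\|P_{B^{\ast}} - P_{A^{\ast}}\|_F^2$ in each by the larger of the two ratios $\|A^{\dagger}\|_{2}^{2}/\|B^{\dagger}\|_{2}^{2}$ and $\|B^{\dagger}\|_{2}^{2}/\|A^{\dagger}\|_{2}^{2}$; since this only enlarges the left-hand side, both inequalities continue to hold with this common coefficient, so the maximum of their right-hand sides is a valid lower bound, which is precisely~\eqref{corlow2.1}.

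Finally, to obtain~\eqref{corlow2.2}, I would add the two inequalities obtained from~\eqref{comlow2.1} at $\xi = 0$ and~\eqref{comlow2.2} at $\eta = 0$. The sum of the left-hand sides equals $(\|A^{\dagger}\|_{2}^{2} + \|B^{\dagger}\|_{2}^{2})(\|P_B - P_A\|_F^2 + \|P_{B^{\ast}} - P_{A^{\ast}}\|_F^2)$, because $J_A(0) + J_B(0)$ factors out of both projection terms, while the sum of the right-hand sides is $2(2\|\widetilde{E}\|_F^2 - \|A^{\dagger}EB^{\dagger}\|_F^2 - \|B^{\dagger}EA^{\dagger}\|_F^2)$. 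Dividing through by $\|A^{\dagger}\|_{2}^{2} + \|B^{\dagger}\|_{2}^{2}$ yields the claim.

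Since the entire derivation amounts to parameter substitution into an already-proven theorem, there is no genuine analytic obstacle. The only care needed is verifying that the four-term $\max$ in~\eqref{comlow1.2} collapses correctly and, in the rank-equal case, that the coefficient-enlarging combination step for~\eqref{corlow2.1} is monotone so that taking the maximum preserves validity.
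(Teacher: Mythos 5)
Your proposal is correct and is exactly the paper's route: the paper obtains this corollary by setting $\xi=\eta=0$ in Theorem~\ref{comlow-thm}, and your substitutions $J_A(0)=\|A^{\dagger}\|_{2}^{2}$, $J_B(0)=\|B^{\dagger}\|_{2}^{2}$, $\Phi(0)=\|\widetilde{E}\|_{F}^{2}-\|B^{\dagger}EA^{\dagger}\|_{F}^{2}$, $\Psi(0)=\|\widetilde{E}\|_{F}^{2}-\|A^{\dagger}EB^{\dagger}\|_{F}^{2}$ reproduce \eqref{corlow1.1}--\eqref{corlow1.2} directly. The extra steps you spell out for the rank-equal case (dividing \eqref{comlow2.1} and \eqref{comlow2.2} by $\|B^{\dagger}\|_{2}^{2}$ and $\|A^{\dagger}\|_{2}^{2}$, enlarging the coefficient to the common maximum, and summing the two inequalities before dividing by $\|A^{\dagger}\|_{2}^{2}+\|B^{\dagger}\|_{2}^{2}$) are valid and simply make explicit what the paper leaves implicit.
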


\begin{remark}\rm
The parameters $\lambda$, $\mu$, $\xi$, and $\eta$ in Theorems~\ref{comup-thm} and~\ref{comlow-thm} can be chosen flexibly. Different parameters will yield different types of combined estimates. Thus, one can optimize the combined bounds in Theorems~\ref{comup-thm} and~\ref{comlow-thm} by selecting some sophisticated parameters.
\end{remark}

\section{Numerical experiments}

\label{sec:numer}

\setcounter{equation}{0}

In Section~\ref{sec:main}, we have developed new perturbation bounds for the $L^{2}$-orthogonal projection onto the column space of a matrix, and compared the new results with the existing ones theoretically. In this section, we give two examples to illustrate the differences between the new bounds and the existing ones. The first one is in fact the example in~\eqref{Ex0}.

\begin{example}\label{Ex1}\rm
Let
\begin{displaymath}
A=\begin{pmatrix}
1 & 0 \\
0 & 0
\end{pmatrix} \quad \text{and} \quad B=\begin{pmatrix}
\frac{\varepsilon}{1+\varepsilon} & 0 \\
0 & \frac{\varepsilon}{10}
\end{pmatrix},
\end{displaymath}
where $0<\varepsilon<1$.
\end{example}

In this example, we have
\begin{displaymath}
E=\begin{pmatrix}
-\frac{1}{1+\varepsilon} & 0 \\
0 & \frac{\varepsilon}{10}
\end{pmatrix}, \quad A^{\dagger}=\begin{pmatrix}
1 & 0 \\
0 & 0
\end{pmatrix}, \quad B^{\dagger}=\begin{pmatrix}
1+\frac{1}{\varepsilon} & 0 \\
0 & \frac{10}{\varepsilon}
\end{pmatrix}, \quad \widetilde{E}=\begin{pmatrix}
\frac{1}{\varepsilon} & 0 \\
0 & \frac{10}{\varepsilon}
\end{pmatrix}.
\end{displaymath}
It is easy to see that
\begin{displaymath}
\|P_{B}-P_{A}\|_{F}^{2}\equiv 1 \quad \forall\,0<\varepsilon<1.
\end{displaymath}

\medskip

(\uppercase\expandafter{\romannumeral1}) \textit{Upper and lower bounds}

\smallskip

Under the setting of Example~\ref{Ex1}, the upper bounds in~\eqref{Chen1}, \eqref{Li1}, \eqref{rank1.2}, \eqref{up1.1}, and~\eqref{up2.1} are listed in Table~\ref{tab:upper1}. And the numerical behaviors ($\varepsilon$ is confined in $(0.1,1)$) of these bounds are shown in Figure~\ref{fig:upper1}.

\begin{table}[h!!]
\centering
\setlength{\tabcolsep}{12mm}{
\begin{tabular}{@{} cc @{}}
\toprule
\text{Estimate} & \text{Upper bound for $\|P_{B}-P_{A}\|_{F}^{2}$} \\
\midrule
\eqref{Chen1} & $1+\frac{1}{\varepsilon^{2}}+\frac{1}{(1+\varepsilon)^{2}}$ \\
\eqref{Li1} & $\frac{99}{100}+\frac{1}{(1+\varepsilon)^{2}}$ \\
\eqref{rank1.2} & $1$ \\
\eqref{up1.1} & $1$ \\
\eqref{up2.1} & $1$ \\
\bottomrule
\end{tabular}}
\caption{\small The upper bounds in~\eqref{Chen1}, \eqref{Li1}, \eqref{rank1.2}, \eqref{up1.1}, and~\eqref{up2.1}.}
\label{tab:upper1}
\end{table}

\begin{figure}[h!!]
\centering
\includegraphics[width=3.1in]{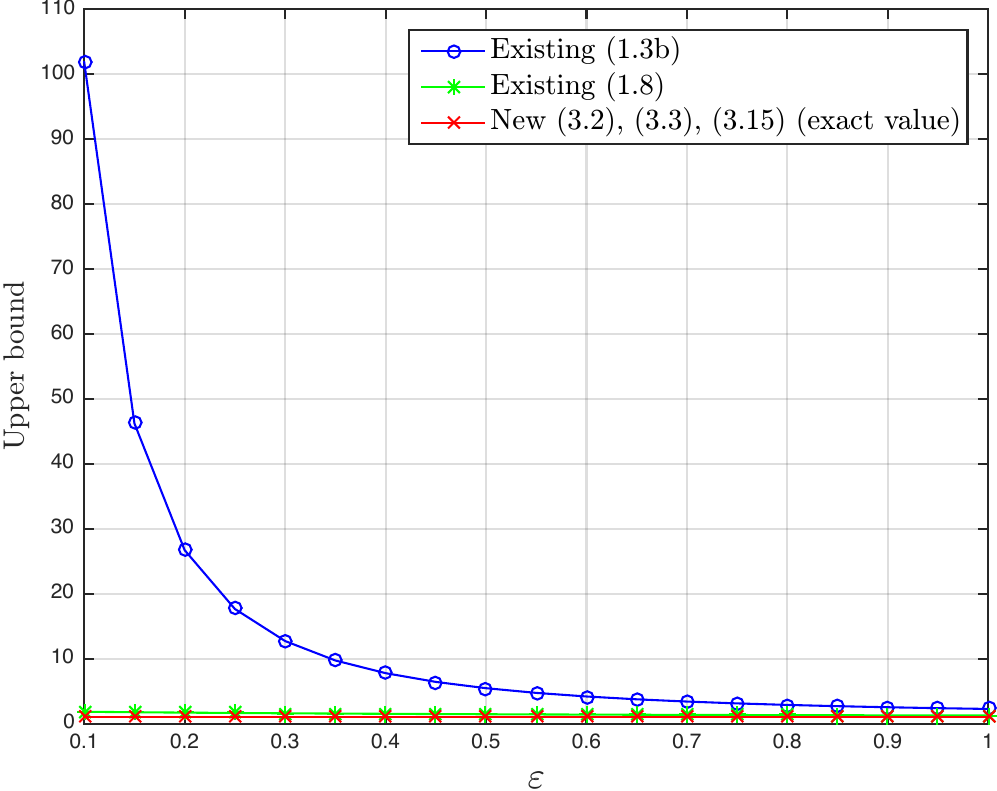}
\caption{\small Numerical comparison of the upper bounds listed in Table~\ref{tab:upper1}.}
\label{fig:upper1}
\end{figure}

From Table~\ref{tab:upper1}, we see that the upper bounds in~\eqref{rank1.2}, \eqref{up1.1}, and~\eqref{up2.1} have attained the exact value $1$. Figure~\ref{fig:upper1} shows that the upper bound in~\eqref{Chen1} will deviate from the exact value seriously when $\varepsilon$ is small.

In addition, direct computations yield that the lower bounds in~\eqref{rank1.2}, \eqref{low1.1}, and~\eqref{low2.1} are all the exact value $1$.

\medskip

(\uppercase\expandafter{\romannumeral2}) \textit{Combined upper and lower bounds}

\smallskip

For simplicity, we define
\begin{align*}
&\mathscr{C}_{1}:=\|P_{B}-P_{A}\|_{F}^{2}+\min\bigg\{\frac{\|A^{\dagger}\|_{2}^{2}}{\|B^{\dagger}\|_{2}^{2}},\frac{\|B^{\dagger}\|_{2}^{2}}{\|A^{\dagger}\|_{2}^{2}}\bigg\}\|P_{B^{\ast}}-P_{A^{\ast}}\|_{F}^{2},\\ &\mathscr{C}_{2}:=\|P_{B}-P_{A}\|_{F}^{2}+\|P_{B^{\ast}}-P_{A^{\ast}}\|_{F}^{2},\\
&\mathscr{C}_{3}:=\|P_{B}-P_{A}\|_{F}^{2}+\max\bigg\{\frac{\|A^{\dagger}\|_{2}^{2}}{\|B^{\dagger}\|_{2}^{2}},\frac{\|B^{\dagger}\|_{2}^{2}}{\|A^{\dagger}\|_{2}^{2}}\bigg\}\|P_{B^{\ast}}-P_{A^{\ast}}\|_{F}^{2}.
\end{align*}

Under the setting of Example~\ref{Ex1}, we have
\begin{displaymath}
\mathscr{C}_{1}=1+\frac{\varepsilon^{2}}{100}, \quad \mathscr{C}_{2}=2, \quad \text{and} \quad \mathscr{C}_{3}=1+\frac{100}{\varepsilon^{2}}.
\end{displaymath}
The combined upper bounds for $\mathscr{C}_{1}$ in~\eqref{Chen-comb1} and~\eqref{corup1.1} are given in Table~\ref{tab:comup1.1}, and the combined upper bounds for $\mathscr{C}_{2}$ in~\eqref{Li-comb1} and~\eqref{corup1.2} are listed in Table~\ref{tab:comup1.2}. The numerical behaviors ($\varepsilon$ is confined in $(0.1,1)$) of these bounds are shown in Figure~\ref{fig:comup1}.

\begin{table}[h!!]
\centering
\setlength{\tabcolsep}{12mm}{
\begin{tabular}{@{} cc @{}}
\toprule
\text{Estimate} & \text{Combined upper bound for $\mathscr{C}_{1}$} \\
\midrule
\eqref{Chen-comb1} & $1+\frac{\varepsilon^{2}}{100}+\frac{1}{(1+\varepsilon)^{2}}+\frac{100}{\varepsilon^{2}(1+\varepsilon)^{2}}$ \\
\eqref{corup1.1} & $1+\frac{\varepsilon^{2}}{100}$ \\
\bottomrule
\end{tabular}}
\caption{\small The combined upper bounds in~\eqref{Chen-comb1} and~\eqref{corup1.1}.}
\label{tab:comup1.1}
\end{table}

\begin{table}[h!!]
\centering
\setlength{\tabcolsep}{12mm}{
\begin{tabular}{@{} cc @{}}
\toprule
\text{Estimate} & \text{Combined upper bound for $\mathscr{C}_{2}$} \\
\midrule
\eqref{Li-comb1} & $2-\frac{2}{\varepsilon^{2}}+\frac{200}{\varepsilon^{2}(1+\varepsilon)^{2}}$ \\
\eqref{corup1.2}  & $2$ \\
\bottomrule
\end{tabular}}
\caption{\small The combined upper bounds in~\eqref{Li-comb1} and~\eqref{corup1.2}.}
\label{tab:comup1.2}
\end{table}

\begin{figure}[h!!]
\centering
\begin{tabular}{cc}
\begin{minipage}[t]{3.1in}
\includegraphics[width=3.02in]{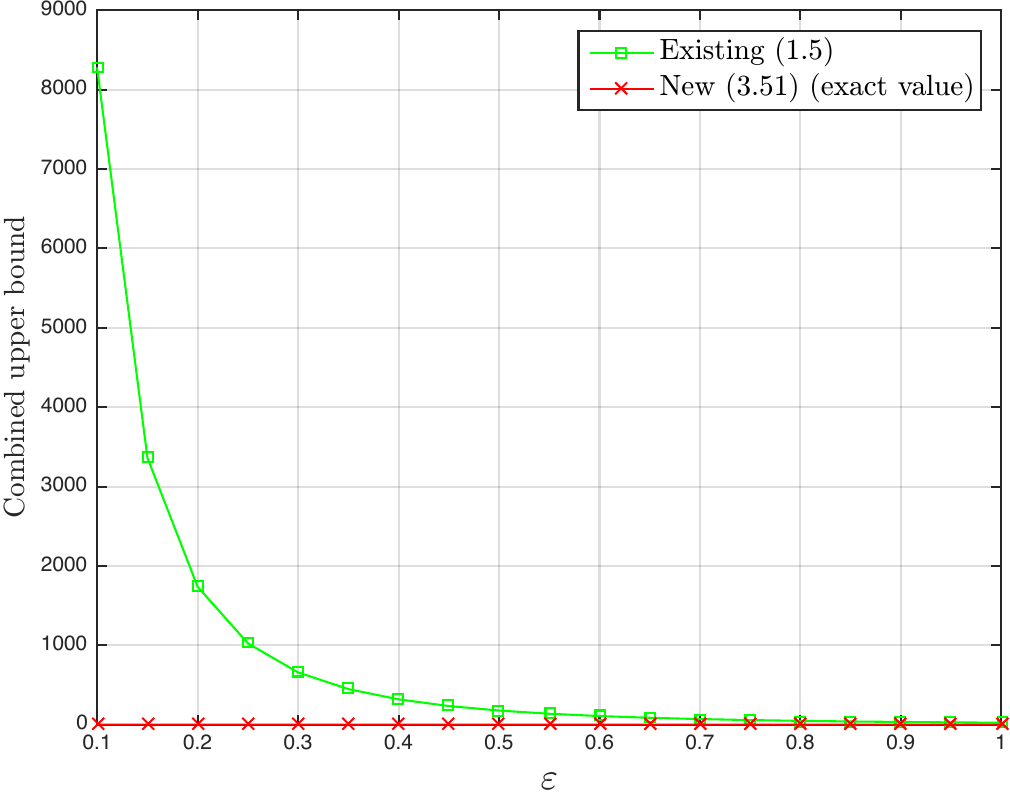}
\end{minipage}
\begin{minipage}[t]{3.1in}
\includegraphics[width=3.06in]{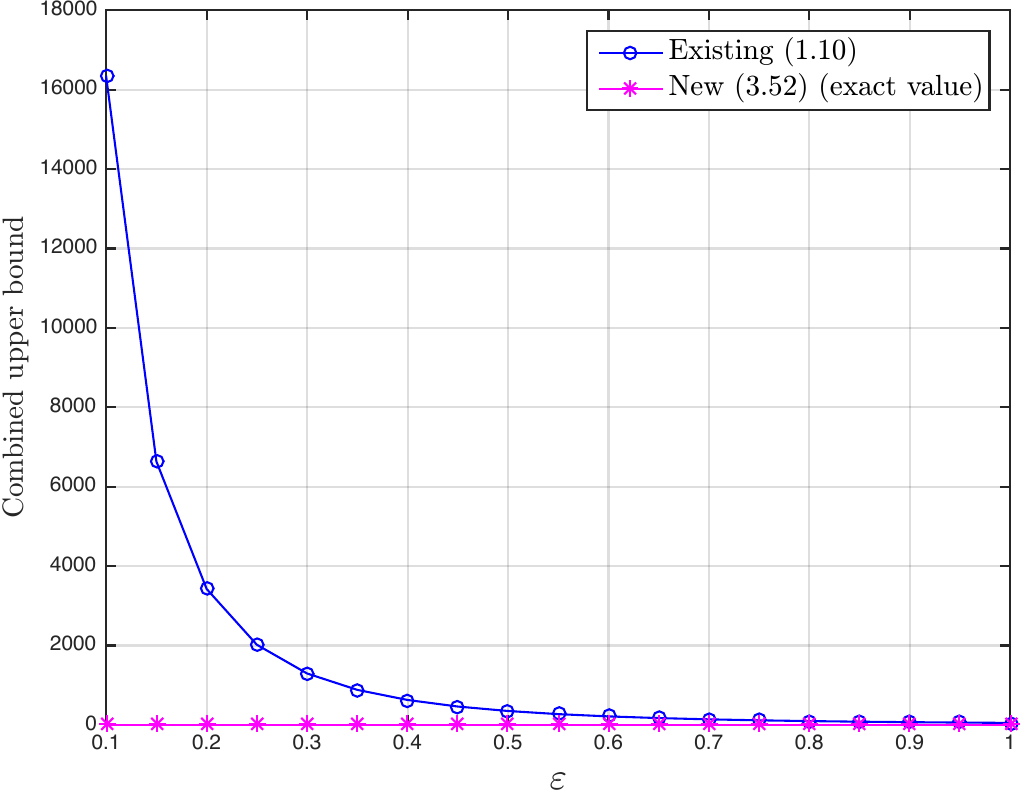}
\end{minipage}
\end{tabular}
\caption{\small Numerical comparison of the combined upper bounds in Table~\ref{tab:comup1.1} (left); numerical comparison of the combined upper bounds in Table~\ref{tab:comup1.2} (right).}
\label{fig:comup1}
\end{figure}

From Tables~\ref{tab:comup1.1} and~\ref{tab:comup1.2}, we see that the combined upper bounds in~\eqref{corup1.1} and~\eqref{corup1.2} have attained the exact values $1+\frac{\varepsilon^{2}}{100}$ and $2$, respectively. Figure~\ref{fig:comup1} displays that the existing bounds in~\eqref{Chen-comb1} and~\eqref{Li-comb1} will deviate from the corresponding exact values seriously when $\varepsilon$ is small.

Furthermore, straightforward calculations yield that the lower bound for $\mathscr{C}_{3}$ in~\eqref{corlow1.1} is $1+\frac{100}{\varepsilon^{2}}$ and the lower bound for $\mathscr{C}_{2}$ in~\eqref{corlow1.2} is $2$. Thus, the combined lower bounds in~\eqref{corlow1.1} and~\eqref{corlow1.2} have attained the corresponding exact values.

The next example provides a complex matrix case.

\begin{example}\label{Ex2}\rm
Let
\begin{displaymath}
A=\begin{pmatrix}
i & 0 \\
0 & 0
\end{pmatrix} \quad \text{and} \quad B=\begin{pmatrix}
\frac{i}{1+\varepsilon} & \varepsilon \\
0 & \varepsilon
\end{pmatrix},
\end{displaymath}
where $i=\sqrt{-1}$ and $0<\varepsilon<\frac{1}{2}$.
\end{example}

In this example, we have
\begin{displaymath}
E=\begin{pmatrix}
-\frac{\varepsilon i}{1+\varepsilon} & \varepsilon \\
0 & \varepsilon
\end{pmatrix}, \quad A^{\dagger}=\begin{pmatrix}
-i & 0 \\
0 & 0
\end{pmatrix}, \quad B^{\dagger}=\begin{pmatrix}
-(1+\varepsilon)i & (1+\varepsilon)i \\
0 & \frac{1}{\varepsilon}
\end{pmatrix}, \quad \widetilde{E}=\begin{pmatrix}
-\varepsilon i & (1+\varepsilon)i \\
0 & \frac{1}{\varepsilon}
\end{pmatrix}.
\end{displaymath}
Obviously, it holds that
\begin{displaymath}
\|P_{B}-P_{A}\|_{F}^{2}\equiv 1 \quad \forall\,0<\varepsilon<\frac{1}{2}.
\end{displaymath}

\medskip

(\uppercase\expandafter{\romannumeral1}) \textit{Upper and lower bounds}

\smallskip

Under the setting of Example~\ref{Ex2}, the upper bounds in~\eqref{Chen1}, \eqref{Li1}, \eqref{rank1.2}, \eqref{up1.1}, and~\eqref{up2.1} are given in Table~\ref{tab:upper2}, and the lower bounds in~\eqref{rank1.2}, \eqref{low1.1}, and~\eqref{low2.1} are listed in Table~\ref{tab:lower2}. Numerical behaviors of these bounds are shown in Figure~\ref{fig:upper2}.

\begin{table}[h!!]
\centering
\setlength{\tabcolsep}{1mm}{
\begin{tabular}{@{} cc @{}}
\toprule
\text{Estimate} & \text{Upper bound for $\|P_{B}-P_{A}\|_{F}^{2}$} \\
\midrule
\eqref{Chen1} & $2(1+\varepsilon+\varepsilon^{2})+\frac{\varepsilon^{2}}{(1+\varepsilon)^{2}}$ \\
\eqref{Li1} & $2\varepsilon^{2}(1+\varepsilon)+\frac{\varepsilon}{1+\varepsilon}+(1+\varepsilon+\varepsilon^{2})\sqrt{4\varepsilon^{4}+\frac{1}{(1+\varepsilon)^{4}}}$ \\
\eqref{rank1.2} & $1$ \\
\eqref{up1.1} & $1$ \\
\eqref{up2.1} & $\frac{1}{2}+\varepsilon^{2}(1+\varepsilon)^{2}+\sqrt{\frac{1}{4}+\varepsilon^{4}(1+\varepsilon)^{4}}$ \\
\bottomrule
\end{tabular}}
\caption{\small The upper bounds in~\eqref{Chen1}, \eqref{Li1}, \eqref{rank1.2}, \eqref{up1.1}, and~\eqref{up2.1}.}
\label{tab:upper2}
\end{table}

\begin{table}[h!!]
\centering
\setlength{\tabcolsep}{14mm}{
\begin{tabular}{@{} cc @{}}
\toprule
\text{Estimate} & \text{Lower bound for $\|P_{B}-P_{A}\|_{F}^{2}$} \\
\midrule
\eqref{rank1.2} & $1$ \\
\eqref{low1.1} & $1$ \\
\eqref{low2.1} & $\frac{2+2\varepsilon^{2}(1+\varepsilon)^{2}}{1+2\varepsilon^{2}(1+\varepsilon)^{2}+\sqrt{1+4\varepsilon^{4}(1+\varepsilon)^{4}}}$ \\
\bottomrule
\end{tabular}}
\caption{\small The lower bounds in~\eqref{rank1.2}, \eqref{low1.1}, and~\eqref{low2.1}.}
\label{tab:lower2}
\end{table}

\begin{figure}[h!!]
\centering
\begin{tabular}{cc}
\begin{minipage}[t]{3.1in}
\includegraphics[width=3.02in]{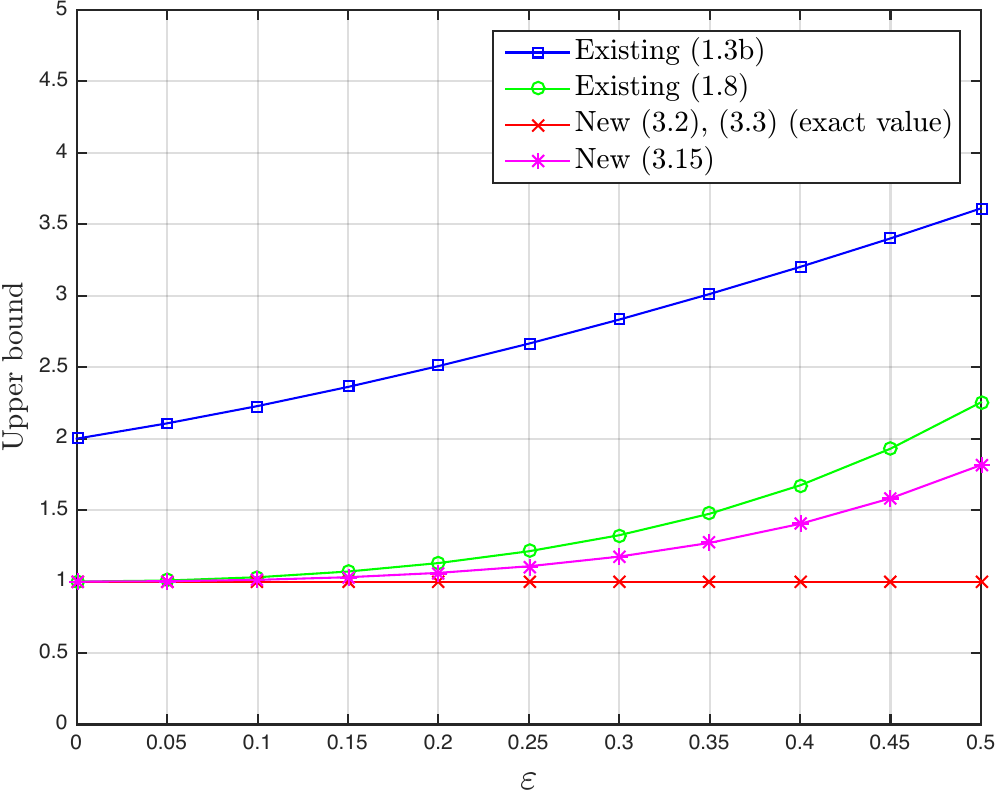}
\end{minipage}
\begin{minipage}[t]{3.1in}
\includegraphics[width=3.06in]{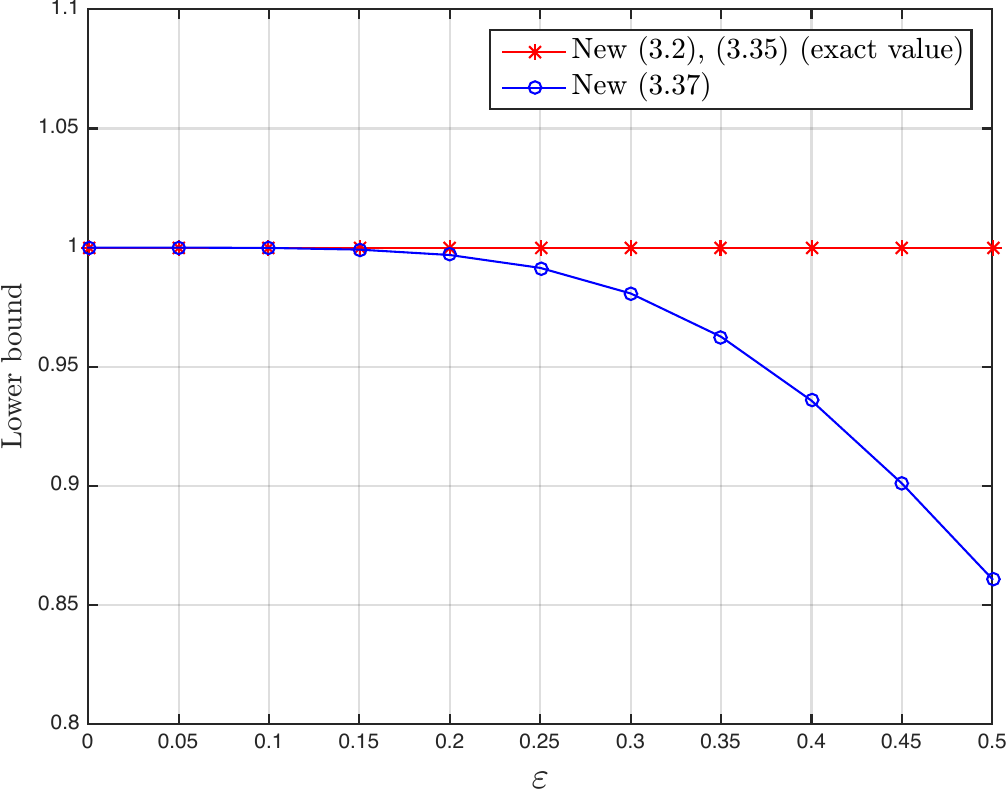}
\end{minipage}
\end{tabular}
\caption{\small Numerical comparison of the upper bounds in Table~\ref{tab:upper2} (left); numerical comparison of the lower bounds in Table~\ref{tab:lower2} (right).}
\label{fig:upper2}
\end{figure}

From Table~\ref{tab:upper2}, we see that the upper bounds in~\eqref{rank1.2} and~\eqref{up1.1} have attained the exact value $1$. And Figure~\ref{fig:upper2} (left) shows that the estimate~\eqref{up2.1} is sharper than both~\eqref{Chen1} and~\eqref{Li1}.

From Table~\ref{tab:lower2}, we see that the lower bounds in~\eqref{rank1.2} and~\eqref{low1.1} have attained the exact value $1$. Moreover, Figure~\ref{fig:upper2} (right) displays that the lower bound in~\eqref{low2.1} is very close to the exact value (especially when $\varepsilon$ is small).

\medskip

(\uppercase\expandafter{\romannumeral2}) \textit{Combined upper and lower bounds}

\smallskip

Under the setting of Example~\ref{Ex2}, we have
\begin{align*}
\mathscr{C}_{1}&=1+\frac{2\varepsilon^{2}}{1+2\varepsilon^{2}(1+\varepsilon)^{2}+\sqrt{1+4\varepsilon^{4}(1+\varepsilon)^{4}}},\\
\mathscr{C}_{2}&=2,\\
\mathscr{C}_{3}&=1+(1+\varepsilon)^{2}+\frac{1}{2\varepsilon^{2}}+\sqrt{(1+\varepsilon)^{4}+\frac{1}{4\varepsilon^{4}}}.
\end{align*}

The combined upper bounds for $\mathscr{C}_{1}$ in~\eqref{Chen-comb1} and~\eqref{corup1.1} are listed in Table~\ref{tab:comup2.1}, and the combined upper bounds for $\mathscr{C}_{2}$ in~\eqref{Li-comb1} and~\eqref{corup1.2} are given in Table~\ref{tab:comup2.2}. In addition, the lower bound for $\mathscr{C}_{3}$ in~\eqref{corlow1.1} is
\begin{displaymath}
\frac{1}{\varepsilon^{2}}+\frac{2+2\varepsilon^{2}(1+\varepsilon)^{2}}{1+2\varepsilon^{2}(1+\varepsilon)^{2}+\sqrt{1+4\varepsilon^{4}(1+\varepsilon)^{4}}},
\end{displaymath}
and the lower bound for $\mathscr{C}_{2}$ in~\eqref{corlow1.2} is
\begin{displaymath}
\frac{4+2\varepsilon^{2}(1+\varepsilon)^{2}}{1+2\varepsilon^{2}(1+\varepsilon)^{2}+\sqrt{1+4\varepsilon^{4}(1+\varepsilon)^{4}}}.
\end{displaymath}
Numerical behaviors of these bounds are shown in Figures~\ref{fig:comup2} and~\ref{fig:comlow2}.

\begin{table}[h!!]
\centering
\setlength{\tabcolsep}{11mm}{
\begin{tabular}{@{} cc @{}}
\toprule
\text{Estimate} & \text{Combined upper bound for $\mathscr{C}_{1}$} \\
\midrule
\eqref{Chen-comb1} & $\frac{1+2(1+\varepsilon)^{2}}{2(1+\varepsilon)^{2}}\big(1+2\varepsilon^{2}+2\varepsilon^{2}(1+\varepsilon)^{2}+\sqrt{1+4\varepsilon^{4}(1+\varepsilon)^{4}}\big)$ \\
\eqref{corup1.1} & $\frac{1}{2}+2\varepsilon^{2}+\varepsilon^{2}(1+\varepsilon)^{2}+\frac{1}{2}\sqrt{1+4\varepsilon^{4}(1+\varepsilon)^{4}}$ \\
\bottomrule
\end{tabular}}
\caption{\small The combined upper bounds in~\eqref{Chen-comb1} and~\eqref{corup1.1}.}
\label{tab:comup2.1}
\end{table}

\begin{table}[h!!]
\centering
\setlength{\tabcolsep}{2mm}{
\begin{tabular}{@{} cc @{}}
\toprule
\text{Estimate} & \text{Combined upper bound for $\mathscr{C}_{2}$} \\
\midrule
\eqref{Li-comb1} & $\frac{1+2(1+\varepsilon)^{2}}{(1+\varepsilon)^{2}}\big(1+2\varepsilon^{2}(1+\varepsilon)^{2}+\sqrt{1+4\varepsilon^{4}(1+\varepsilon)^{4}}\big)-2\varepsilon^{2}-(1+\varepsilon)^{2}$ \\
\eqref{corup1.2}  & $\frac{3}{2}+3\varepsilon^{2}(1+\varepsilon)^{2}+\frac{3}{2}\sqrt{1+4\varepsilon^{4}(1+\varepsilon)^{4}}$ \\
\bottomrule
\end{tabular}}
\caption{\small The combined upper bounds in~\eqref{Li-comb1} and~\eqref{corup1.2}.}
\label{tab:comup2.2}
\end{table}

\begin{figure}[h!!]
\centering
\begin{tabular}{cc}
\begin{minipage}[t]{3.1in}
\includegraphics[width=3.05in]{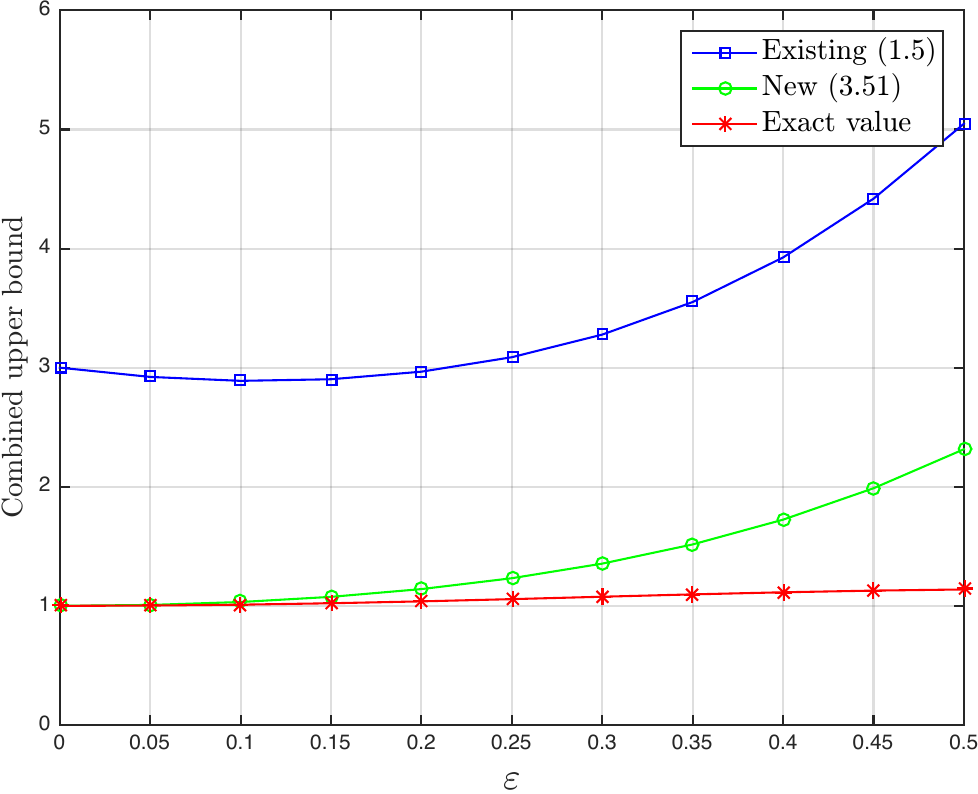}
\end{minipage}
\begin{minipage}[t]{3.1in}
\includegraphics[width=3.06in]{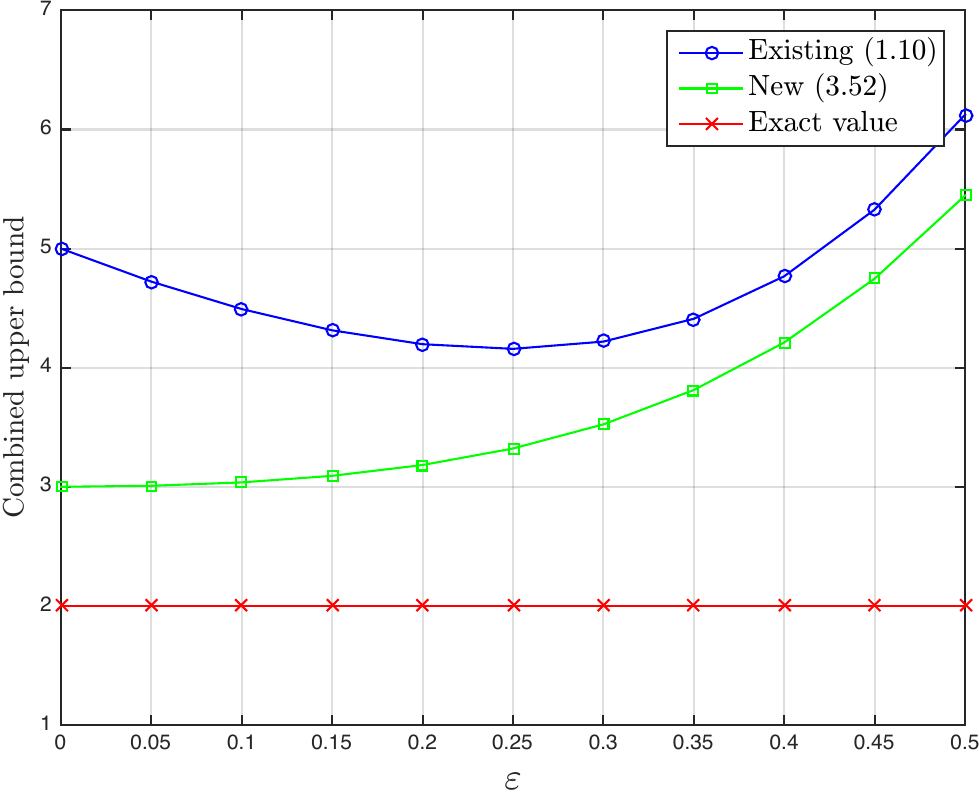}
\end{minipage}
\end{tabular}
\caption{\small Numerical comparison of the combined upper bounds for $\mathscr{C}_{1}$ (left); numerical comparison of the combined upper bounds for $\mathscr{C}_{2}$ (right).}
\label{fig:comup2}
\end{figure}

\begin{figure}[h!!]
\centering
\begin{tabular}{cc}
\begin{minipage}[t]{3.1in}
\includegraphics[width=3.05in]{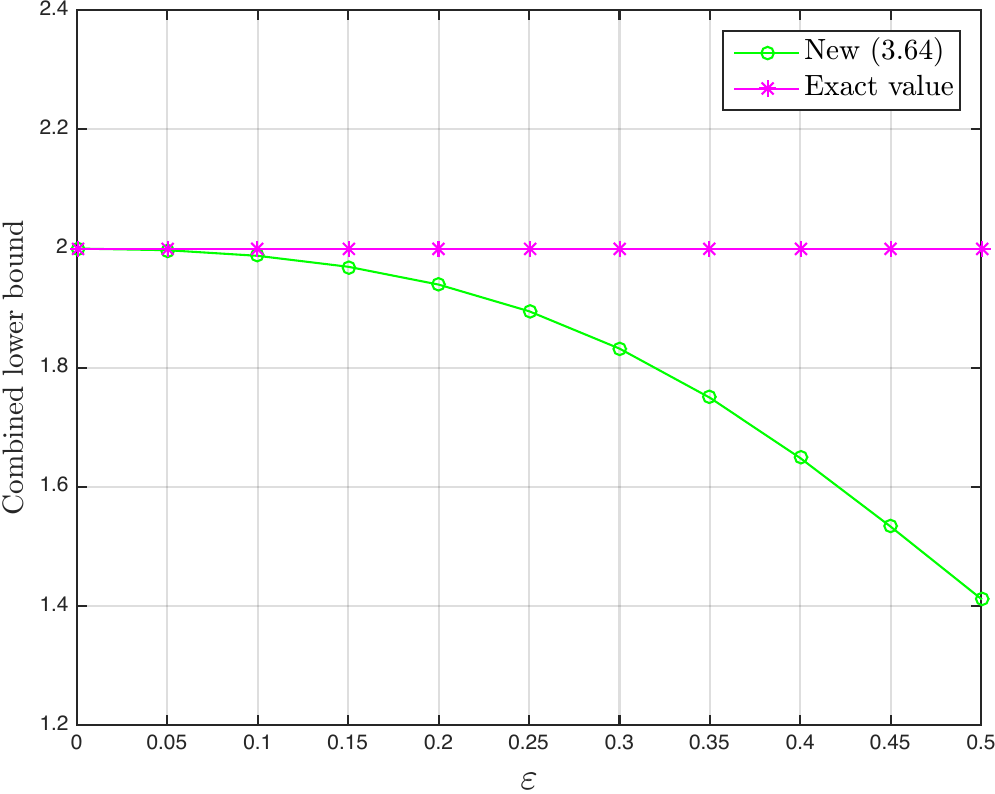}
\end{minipage}
\begin{minipage}[t]{3.1in}
\includegraphics[width=3.09in]{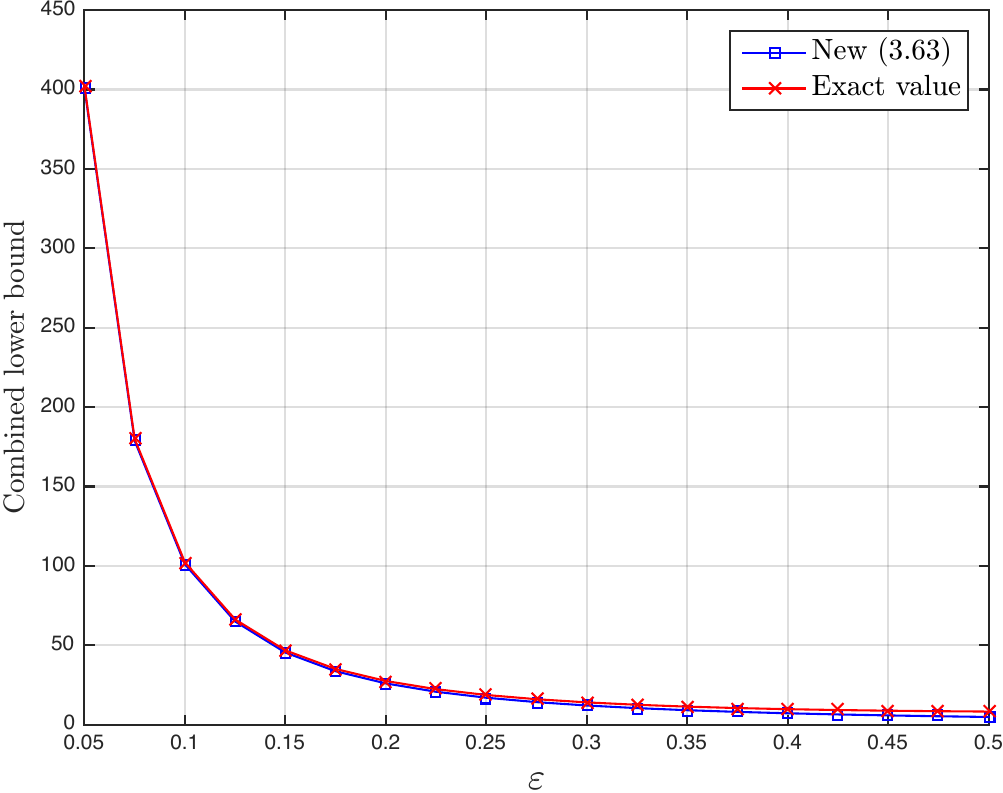}
\end{minipage}
\end{tabular}
\caption{\small Numerical behavior of the combined lower bound for $\mathscr{C}_{2}$ (left); numerical behavior ($\varepsilon$ is confined in $(0.05,0.5)$) of the combined lower bound for $\mathscr{C}_{3}$ (right).}
\label{fig:comlow2}
\end{figure}

Figure~\ref{fig:comup2} displays that the new combined upper bounds for $\mathscr{C}_{1}$ and $\mathscr{C}_{2}$ are smaller than the existing ones. Moreover, Figure~\ref{fig:comlow2} shows that the combined lower bounds in~\eqref{corlow1.1} and~\eqref{corlow1.2} are very close to the corresponding exact values (especially when $\varepsilon$ is small).

\section*{Acknowledgments}

The author would like to thank the anonymous referees for their valuable comments and suggestions, which greatly improved the original version of this paper. This research was carried out by the author during his Ph.D. study at the Academy of Mathematics and Systems Science, Chinese Academy of Sciences. The author is grateful to Professor Chen-Song Zhang for his kind support.

\bibliographystyle{abbrv}
\bibliography{references}

\begin{thebibliography}{10}

\bibitem{Betcke2017}
M.~M. Betcke and H.~Voss.
\newblock {Restarting iterative projection methods for Hermitian nonlinear
  eigenvalue problems with minmax property}.
\newblock {\em Numer. Math.}, 135:397--430, 2017.

\bibitem{Chen2016}
Y.~Chen, X.~Chen, and W.~Li.
\newblock {On perturbation bounds for orthogonal projections}.
\newblock {\em Numer. Algor.}, 73:433--444, 2016.

\bibitem{Coakley2011}
E.~S. Coakley, V.~Rokhlin, and M.~Tygert.
\newblock {A fast randomized algorithm for orthogonal projection}.
\newblock {\em SIAM J. Sci. Comput.}, 33:849--868, 2011.

\bibitem{Drineas2011}
P.~Drineas, M.~W. Mahoney, S.~Muthukrishnan, and T.~Sarl\'{o}s.
\newblock {Faster least squares approximation}.
\newblock {\em Numer. Math.}, 117:219--249, 2011.

\bibitem{Fierro1995}
R.~D. Fierro and J.~R. Bunch.
\newblock {Orthogonal projection and total least squares}.
\newblock {\em Numer. Linear Algebra Appl.}, 2:135--153, 1995.

\bibitem{Fierro1996}
R.~D. Fierro and J.~R. Bunch.
\newblock {Perturbation theory for orthogonal projection methods with
  applications to least squares and total least squares}.
\newblock {\em Linear Algebra Appl.}, 234:71--96, 1996.

\bibitem{Golub2013}
G.~H. Golub and C.~F. Van~Loan.
\newblock {\em {Matrix Computations}}.
\newblock Johns Hopkins University Press, Baltimore, 4th edition, 2013.

\bibitem{Grcar2010}
J.~F. Grcar.
\newblock {Spectral condition numbers of orthogonal projections and full rank
  linear least squares residuals}.
\newblock {\em SIAM J. Matrix Anal. Appl.}, 31:2934--2949, 2010.

\bibitem{Hochstenbach2010}
M.~E. Hochstenbach and L.~Reichel.
\newblock {Subspace-restricted singular value decompositions for linear
  discrete ill-posed problems}.
\newblock {\em J. Comput. Appl. Math.}, 235:1053--1064, 2010.

\bibitem{Horn2013}
R.~A. Horn and C.~R. Johnson.
\newblock {\em {Matrix Analysis}}.
\newblock Cambridge University Press, Cambridge, 2nd edition, 2013.

\bibitem{Jia1999}
Z.~Jia.
\newblock {Composite orthogonal projection methods for large matrix
  eigenproblems}.
\newblock {\em Sci. China Ser. A}, 42:577--585, 1999.

\bibitem{Li2013}
B.~Li, W.~Li, and L.~Cui.
\newblock {New bounds for perturbation of the orthogonal projection}.
\newblock {\em Calcolo}, 50:69--78, 2013.

\bibitem{Li2018}
W.~Li, Y.~Chen, S.~Vong, and Q.~Luo.
\newblock {Some refined bounds for the perturbation of the orthogonal
  projection and the generalized inverse}.
\newblock {\em Numer. Algor.}, 79:657--677, 2018.

\bibitem{Morigi2006}
S.~Morigi, L.~Reichel, and F.~Sgallari.
\newblock {A truncated projected SVD method for linear discrete ill-posed
  problems}.
\newblock {\em Numer. Algor.}, 43:197--213, 2006.

\bibitem{Morigi2007}
S.~Morigi, L.~Reichel, and F.~Sgallari.
\newblock {Orthogonal projection regularization operators}.
\newblock {\em Numer. Algor.}, 44:99--114, 2007.

\bibitem{Nelson1987}
S.~Nelson and M.~Neumann.
\newblock {Generalizations of the projection method with applications to SOR
  theory for Hermitian positive semidefinite linear systems}.
\newblock {\em Numer. Math.}, 51:123--141, 1987.

\bibitem{Stewart1977}
G.~W. Stewart.
\newblock {On the perturbation of pseudo-inverses, projections and linear least
  squares problems}.
\newblock {\em SIAM Rev.}, 19:634--662, 1977.

\bibitem{Stewart1990}
G.~W. Stewart and J.-G. Sun.
\newblock {\em {Matrix Perturbation Theory}}.
\newblock Academic Press, Boston, 1990.

\bibitem{Sun1984}
J.-G. Sun.
\newblock {The stability of orthogonal projections}.
\newblock {\em J. Grad. Sch.}, 1:123--133, 1984 (in Chinese).

\bibitem{Sun2001}
J.-G. Sun.
\newblock {\em {Matrix Perturbation Analysis}}.
\newblock Science Press, Beijing, 2nd edition, 2001 (in Chinese).

\end{thebibliography}

\end{document}